\documentclass[11pt]{article}

\usepackage[margin=1in]{geometry}
\usepackage{authblk}

\usepackage[colorlinks,citecolor=blue,urlcolor=blue]{hyperref}
\usepackage{amsthm,amsmath,amsfonts,amssymb,graphicx,subcaption,soul,multicol,url,comment,arydshln,color,cleveref}
\usepackage[bb=boondox]{mathalfa}
\usepackage{enumerate}
\usepackage[shortlabels]{enumitem}

\newtheorem{thm}{Theorem}
\newtheorem{cor}{Corollary}
\newtheorem{lem}{Lemma}
\newtheorem{prop}{Proposition}
\newtheorem{defn}{Definition}
\newtheorem{assum}{Assumption}

\newtheorem{rem}{Remark}

\long\def\symbolfootnote[#1]#2{\begingroup%
\def\thefootnote{\fnsymbol{footnote}}\footnote[#1]{#2}\endgroup}

\newcommand{\defeq}{\stackrel{\text{def}}{=}}
\newcommand{\norm}[1]{\left\lVert#1\right\rVert}

\newcommand{\abs}[1]{\left\lvert#1\right\rvert}

\title{Data-driven robust Markov decision processes on Borel spaces: performance guarantees via an axiomatic approach}

\author{Sivaramakrishnan Ramani}
\affil{\small Department of Computational Applied Mathematics and Operations Research, Rice University, Houston, USA. sivar@rice.edu}

\begin{document}

\date{}
\maketitle

\begin{abstract}
We consider Markov decision processes (MDPs) with unknown disturbance distribution and address this problem using the robust Markov decision process (RMDP) approach. We construct the empirical distribution of the unknown disturbance distribution and characterize our ambiguity set of distributions as the sublevel set of a nonnegative distance function from the empirical distribution. By connecting the weak convergence of distributions to convergence with respect to the distance function, we prove that the robust optimal value function and the out-of-sample value function converge to the true optimal value function with increasing sample-sizes. We establish that, for finite sample-sizes, the robust optimal value function serves as a high probability upper bound on the out-of-sample value function. We also obtain probabilistic convergence rates, sample complexity bounds, and out-of-distribution performance bounds. The finite sample performance guarantees rely on the distance function satisfying a certain concentration type inequality. Several well-studied distances in the literature meet the requirements imposed on the distance function. We also analyze the data-driven properties of empirical MDPs and demonstrate that, unlike our data-driven RMDPs, empirical MDPs fail to satisfy some of the finite sample performance guarantees.
\end{abstract}

\section{Introduction}\label{sec:introduction}
A discrete-time infinite-horizon Markov decision process (MDP) can be described as follows \cite{bertsekas1978stochastic,hernandez2012discrete}. A system is in state $x_t \in \mathbb{X}$ at the beginning of stage $t \in \{1,2,\ldots,\}$. Upon observing this state of the system, the decision-maker chooses an action $a_t \in A(x_t) \subseteq \mathbb{A}$. The system then makes a stochastic transition to the state $x_{t+1} = F(x_t,a_t,w_t)$, and the decision-maker incurs a cost $c(x_t,a_t,w_t) \in \mathbb{R}$. Here, $\{w_t\}$ is a sequence of independent and identically distributed (iid) random variables with values in the disturbance space $\mathbb{W}$ and distribution $\mu$. Denote the set of history-dependent randomized policies as $\Pi_{\text{HR}}$. The value of the policy $\pi \in \Pi_{\text{HR}}$ when the initial state is $x$ equals
\begin{equation}\label{eqn:mdpvalue}
 J(\pi,x) \defeq \mathbb{E}^\pi\left[\sum_{t=1}^\infty \alpha^{t-1}c(x_t,a_t,w_t)\Big|x_1=x\right],
\end{equation}
where $\alpha \in (0,1)$ is the discount factor. Here, $\mathbb{E}^\pi$ is the expectation taken with respect to the distribution induced by the policy $\pi$. The value of the MDP is then given as
\begin{equation}\label{eqn:mdpoptimality}
J^*(x) \defeq \inf_{\pi \in \Pi_{\text{HR}}}J(\pi,x) \ \forall x \in \mathbb{X}. 
\end{equation}
The function $J^*$ is called the optimal value function and a policy $\pi \in \Pi_{\text{HR}}$ for which $J(\pi,x) = J^*(x) \ \forall x \in \mathbb{X}$ is called the optimal policy.  A deterministic stationary policy $\pi$ is a measurable function from $\mathbb{X}$ to $\mathbb{A}$ such that $\pi(x) \in A(x)$ is the action prescribed in state $x \in \mathbb{X}$, regardless of stage $t \in \{1,2,\ldots,\}$. The set of deterministic stationary policies is denoted by $\Pi$. Sufficient conditions that guarantee the existence of an optimal policy in $\Pi$ will be assumed in \Cref{sec:rmdp_model} and further sections.

The distribution $\mu$ of the iid disturbance sequence $\{w_t\}$ is unknown most of the time.  Robust Markov decision processes (RMDPs) tackle this issue using a worst-case approach. In RMDPs, the decision-maker models the unknown distribution of the disturbance variable to belong to a set of distributions, referred the  ambiguity set, and the aim is to minimize the largest expected total discounted cost over all distributions from this ambiguity set.  This can be modeled as a two-player Markov game (or stochastic game) with complete information between the decision-maker and a fictitious adversary, where the decision-maker picks a sequence of actions to minimize the expected total discounted cost, while the adversary selects a sequence of distributions from the ambiguity set that maximizes the expected total discounted cost incurred by the decision-maker. For RMDPs on general spaces, the adversary is allowed to employ history-dependent randomized policies, and various properties of the RMDP under consideration are then established \cite{bauerle2022distributionally,glauner2020robust,gonzalez2003minimax,jaskiewicz2011stochastic,jaskiewicz2014robust}. In contrast, the adversary's policy class for RMDPs over finite spaces is typically restricted to deterministic Markovian policies (or a subclass of it), and the corresponding RMDP problem is then analyzed \cite{goyal2023robust,iyengar2005robust,mannor2016robust,nilim2005robust,wiesemann2013robust}. An exception in the case of finite space RMDPs is  \cite{wang2023foundation}, which formally analyzes RMDPs where the adversary can choose a policy from the set of history-dependent randomized policies.

\subsection{Contributions of this paper}\label{sec:contributions}
In this paper, we consider RMDPs with ambiguity sets of the form
\begin{equation}\label{eqn:ambiguity_set}
 {\cal P}^N(\epsilon) \defeq \left\{\nu \in {\cal M}(\mathbb{W}) \vert d(\nu, \hat\mu^N) \leq \epsilon\right\}.
\end{equation}
Here, $\hat\mu^N \defeq (1/N)\sum_{i=1}^N \delta_{w_i}$  is the empirical distribution of $\mu$   constructed based on iid samples $w_1,\ldots,w_N$ of the disturbance variable $w$. The notation $\delta_{w_i}$ denotes the Dirac measure concentrated at $w_i$ and ${\cal M}(\mathbb{W})$ is the set of probability distributions on $\mathbb{W}$.  We refer to the parameter $\epsilon \in [0,\infty)$ as the ambiguity set radius. The function $d: {\cal M}(\mathbb{W}) \times {\cal M}(\mathbb{W}) \to [0,\infty]$ is a nonnegative extended real-valued distance function (which may not be a metric) over the space of probability distributions. Thus, ambiguity sets ${\cal P}^N(\epsilon)$ equal the sublevel sets of the distance function from the empirical distribution. Throughout this work, we assume that ${\cal P}^N(\epsilon)$ is  nonempty with probability $1$ for all $N \geq 1$ and $\epsilon \in [0,\infty)$. The condition $d(\nu,\nu) = 0$ for all $\nu \in {\cal M}(\mathbb{W})$ ensures the nonemptiness property. This condition on the distance function is satisfied by all the  distances listed in  \Cref{sec:controldist}. These include total variation (TV) distance, Hellinger distance, Kullback-Leibler (KL) divergence, $\chi^2$ distance, Wasserstein distance, bounded Lipschitz metric, and Prokhorov metric.

We formulate our RMDP problem using the framework developed in \cite{gonzalez2003minimax}.  For $t \in \mathbb{N}$, let $h_t = (x_1,a_1,\nu_1,\ldots, x_{t-1},a_{t-1},\nu_{t-1}, x_t)$ denote a history sequence up to stage $t$, where $x_k \in \mathbb{X}, a_k \in A(x_k)$, and $\nu_k \in {\cal P}^{N}(\epsilon)$ for all $k \in \mathbb{N}$. Denote the set of all history sequences up to stage $t$ by $H_t$. A history-dependent randomized policy $\pi=(\pi_1,\pi_2,\ldots)$ for the decision-maker is a sequence of stochastic kernels from $H_t$ to $\mathbb{A}$ satisfying $\pi_t(A(x_t)|h_t) = 1$ for all $h_t \in H_t$. Let $h_t^{\#} = (h_t,a_t)$ be an extended history sequence where $a_t \in A(x_t)$. Denote by $H_t^{\#}$ the set of extended history sequences up to stage $t$. A history-dependent randomized policy $\gamma = (\gamma_1,\gamma_2,\ldots)$ for the adversary is a sequence of stochastic kernels from $H_t^{\#}$ to ${\cal P}^{N}(\epsilon)$. Denote the set of history-dependent randomized policies for the decision maker and the adversary by $\Pi_{\text{HR}}^{N,\epsilon}$ and $\Gamma_{\text{HR}}^{N,\epsilon}$, respectively.  Supposing the initial state of the system is $x$, the expected total discounted cost incurred by a pair $\pi \in \Pi_{\text{HR}}^{N,\epsilon}$ and $\gamma \in \Gamma_{\text{HR}}^{N,\epsilon}$ is given by $\tilde J_{\#}^{N,\epsilon}(\pi,\gamma,x) \defeq \mathbb{E}^{\pi,\gamma}\left[\sum_{t=1}^{\infty}\alpha^{t-1}c(x_t,a_t,w_t)\vert x_1 = x \right]$,
where $\mathbb{E}^{\pi,\gamma}$ denotes the expectation  taken with respect to the distribution induced by $\pi$ and $\gamma$. The value of the corresponding RMDP is then defined as 
\begin{equation}\label{eqn:robustmdp}
\tilde J^{N,\epsilon}(x) \defeq \inf_{\pi \in \Pi^{N,\epsilon}_{\text{HR}}}\sup_{\gamma \in \Gamma^{N,\epsilon}_{\text{HR}}}\tilde J_{\#}^{N,\epsilon}(\pi,\gamma,x) \ \forall x \in \mathbb{X}.
\end{equation}
We refer to \eqref{eqn:robustmdp} as the data-driven distance-based RMDP problem (henceforth, data-driven RMDP problem). We call $\tilde J^{N,\epsilon}$ the data-driven robust optimal value function (henceforth, robust optimal value function), and a policy $\hat\pi^N \in \Pi^{N,\epsilon}_{\text{HR}}$  such that $\sup_{\gamma \in \Gamma^{N,\epsilon}_{\text{HR}}}\tilde J_{\#}^{N,\epsilon}(\hat\pi^N,\gamma,x) = \tilde J^{N,\epsilon}(x) \ \forall x \in \mathbb{X}$ as the data-driven robust optimal policy (henceforth, robust optimal policy). A deterministic stationary policy for the decision-maker is a measurable function $\pi: \mathbb{X} \to \mathbb{A}$ such that $\pi(x) \in A(x)$ is the action played in state $x \in \mathbb{X}$ for all time-stages $t \in \left\{1,2,\ldots,\right\}$. The model components of our data-driven RMDP will be the same as the underlying  MDP instance, except that the disturbance distribution will be replaced with an ambiguity set. Hence, the set of deterministic stationary policies for the decision-maker in our RMDP coincides with the set of deterministic stationary policies, $\Pi$, of the considered MDP instance. Sufficient conditions for the existence of a robust optimal policy in $\Pi$ will be assumed in \Cref{sec:rmdp_model} and beyond. 

\Cref{sec:data-driven_rmdp} analyzes the asymptotic (as sample-size $N \to \infty$) and finite-sample properties of our data-driven RMDPs under suitable conditions on the distance function. More precisely, we prove that our data-driven RMDPs satisfy the following data-driven performance guarantees.

{\bf 1. Asymptotic convergence to the true optimal value.} We prove that the robust optimal value function converges almost surely to the optimal value function of the true MDP, as $N \to \infty$. Next, we prove the almost sure convergence of the out-of-sample value function to the true optimal value function, as $N \to \infty$. The out-of-sample value function is the value of the robust optimal policy evaluated under the true distribution of the random disturbance. Consistent with the notation in \eqref{eqn:mdpvalue}, we denote the out-of-sample value function by $J(\hat\pi^N, x)$ for all $x \in \mathbb{X}$. Both these convergence results are proved in  \Cref{sec:valueconv} under \Cref{assum:distancemetrize}. That assumption states that if the distributions converge with respect to the distance function, then they weakly converge. An easily verifiable sufficient condition for \Cref{assum:distancemetrize} is presented in \Cref{assum:betametrize}.

{\bf 2. Probabilistic performance guarantee on the out-of-sample value function.} We prove in \Cref{sec:probguarantee} that for finite sample-sizes, the out-of-sample value function is bounded above by the robust optimal value function, with a high probability. This result can be viewed as construction of a confidence interval for the out-of-sample value function. This claim is established under \Cref{assum:controlconc} which is a certain concentration inequality for the distance function.  

{\bf 3. Probabilistic convergence rate and sample complexity.} Under a minor strengthening of \Cref{assum:betametrize}, we establish in \Cref{sec:conv_rate} the rate of convergence of the robust and out-of-sample value functions to the true optimal value function. The sample complexity is defined as the minimum number of iid samples of the disturbance variable needed to bound the suboptimality gap of the robust optimal policy (on the true MDP) to a desired accuracy with a specified level of confidence. In \Cref{sec:sample_complexity}, we derive a lower bound on the sample complexity  using the probabilistic convergence rate result. 

Apart from the assumptions on the distance function, another crucial requirement to establish our data-driven performance guarantees is the suitable selection of the ambiguity set radius. We outline conditions on the ambiguity set radius and, whenever feasible, present closed-form expressions for the ambiguity set radius that ensure the attainment of the data-driven performance guarantees. 

We note that the probabilistic performance guarantee on the out-of-sample value function and the probabilistic convergence rate provide different utilities for a decision-maker. The probabilistic convergence rate provides a probabilistic bound on the closeness of the out-of-sample value function  to the true optimal value function. Since the distribution of the  disturbance variable is unknown, we cannot compute the out-of-sample value function as well as the true optimal value function. Hence, the probabilistic convergence rate result does not provide insight regarding the values of the out-of-sample value function. The probabilistic performance guarantee on the out-of-sample value function fills this gap by providing a high probability computable upper bound on the out-of-sample value function, and  this guarantee can aid the decision-maker in other planning and downstream tasks.  

\Cref{sec:out_of_dist_bounds} analyzes the performance loss
due to deploying a robust optimal policy in a MDP whose disturbance distribution is different from the distribution used to obtain iid samples. Motivating scenarios for studying such performance losses arise across domains such as robotics \cite{wagenmaker2024overcoming} and engineering system design \cite{degrave2022magnetic}, among others. \Cref{thm:ood_rate} provides probabilistic bounds on the out-of-distribution performance loss of the robust optimal policy. The bound reveals that the performance loss decomposes into a statistical loss that typically vanishes with increasing sample-sizes,  and a nonstatistical loss that captures the mismatch (in a certain sense) between the  distribution from which samples are drawn and the new disturbance distribution. 

Empirical MDPs provide an alternative approach to handle the unknown disturbance distribution, in which the MDP under consideration is solved by replacing the unknown disturbance distribution  with its empirical counterpart. \Cref{sec:empirical_mdps} analyzes the quality of the empirical optimal value function (optimal value function of the empirical MDP) and the empirical optimal policy (optimal policy of the empirical MDP) using two metrics --- sample complexity and probabilistic performance guarantee on the out-of-sample value of the empirical optimal policy. \Cref{thm:empirical_mdps_counterexample} in that section establishes via a counterexample that for all confidence levels beyond a threshold and for every finite sample-size, the empirical optimal value function fails to serve as an upper bound on the value of the empirical optimal policy on the true MDP. Additionaly,  \Cref{sec:empirical_mdps}  discusses in a precise sense the ramification of the claims established in   \Cref{thm:empirical_mdps_counterexample}. Roughly speaking, it highlights the inability to bound the suboptimality gap of the empirical optimal policy to any desired level of accuracy and confidence while simultaneously attempting to maximize the probability that the out-of-sample value function of the empirical optimal policy is upper bounded by the empirical optimal value function. This reveals a  divergence between empirical MDPs and our data-driven RMDPs, where all finite sample performance guarantees simultaneously hold for our data-driven RMDPs (see the last paragraph of \Cref{sec:sample_complexity}).

\Cref{sec:controldist} lists well-known distance functions for which  \Cref{assum:distancemetrize} and \Cref{assum:controlconc} hold. These distances include TV distance, Hellinger distance, KL divergence, $\chi^2$ distance, Wasserstein distance, bounded Lipschitz metric, and Prokhorov metric. As a result, the data-driven RMDPs characterized by  ambiguity sets constructed using these distances satisfy the data-driven performance guarantees outlined in this paper.

\subsection{Literature review}\label{sec:lit_review}
A detailed literature review of RMDPs on finite spaces is available in \cite{ramani2022robust,ramani2024family}. Formal analyses of RMDPs on Borel spaces, including a characterization of various optimality conditions, appear in  \cite{bauerle2022distributionally,glauner2020robust,gonzalez2003minimax,jaskiewicz2011stochastic,jaskiewicz2014robust}. RMDPs with distance-based ambiguity sets, especially the Wasserstein ambiguity set, are well-studied in the literature; see   \cite{hakobyan2021wasserstein,hakobyan2023distributionally,kim2023distributional,neufeld2023bounding,tzortis2015tv,tzortis2019tv,tzortzis2016robust,yang2017distributionally,yang2020wasserstein} and references therein. Among them,  \cite{kim2023distributional,yang2020wasserstein} study data-driven RMDPs with Wasserstein ambiguity sets and establish probabilistic performance guarantee result on the out-of-sample value function. However, no asymptotic convergences or sample complexity results were established. Robust approaches similar to the one considered in this paper have also been studied in other related contexts such as partially observable systems, for example,  \cite{boskos2020data,hakobyan2024wasserstein,petersen2000minimax,ugrinovskii1999finite,ugrinovskii2002minimax,van2015distributionally}, and analyzing the connections between RMDPs and distributionally robust multistage stochastic programs \cite{shapiro2021distributionally,shapiro2022distributionally}, among others.

The problem of analyzing the performance loss incurred due to approximating the true MDP with a non-data-driven approximate model is well-studied in the literature. In this setup, the quality of approximation is typically analyzed via the absolute or relative difference between the optimal quantities of the approximate and the true model. See   \cite{bozkurt2025model,gordienko2000estimates,gordienko1998robustness,langen1981convergence,martinez2020stability,montes2003estimates,muller1997does,neufeld2023bounding,saldi2018finite} and references therein for work on this topic. The paper \cite{cooper2012performance} considers MDPs with unknown disturbance distribution and state-, action-, and disturbance-spaces that are subsets of the real line. They derive a sample complexity result by approximating their unknown MDP using empirical MDP. The empirical MDP approach was extended to arbitrary Borel spaces in \cite{gordienko2022stability,gordienko2008discounted,kara2020robustness,zhou2024robustness}, where  \cite{gordienko2022stability,gordienko2008discounted,kara2020robustness} establish the asymptotic convergence of the empirical optimal value function and the corresponding out-of-sample value function to the true optimal value function, and  \cite{zhou2024robustness} presents a sample complexity result.   

Recently, there has been an increased interest in the robust reinforcement learning problem \cite{clavier2024towards,panaganti2022sample,ramesh2024distributionally,shi2023curious,wang2025statistical,xu2023improved,yang2022toward}. In this problem, the goal is to solve an RMDP with a distance-based ambiguity set. The ambiguity set radius is fixed and known, while the nominal distribution (which serves as the center of the ambiguity set) is unknown. The nominal distribution is replaced with its empirical estimate, and the resulting empirical RMDP is solved. The statistical properties of the empirical RMDP in connection with the RMDP under consideration are then investigated. Although the robust reinforcement learning problem seems  similar to the data-driven RMDPs studied in this paper, we note that their setups are entirely different. This is because, for the data-driven RMDPs considered in this paper, the nominal problem is the minimization problem \eqref{eqn:mdpoptimality} rather than a minimax problem (RMDP), and the data-driven performance guarantees in relation to the minimization problem \eqref{eqn:mdpoptimality} are established. Hence, the data-driven performance guarantees established in this paper are not comparable with the results obtained in the literature for the robust reinforcement learning problem. Also, the papers on robust reinforcement learning focus only on ambiguity sets defined using TV, KL, $\chi^2$, Wasserstein, and $f_\alpha$-divergence with $\alpha > 1$. Those papers first reformulate the adversary's decision problem using convex programming duality,  and then establish the statistical properties by analyzing the reformulated problem. This contrasts the axiomatic approach pursued in this work, in which we rely on a topological-like characterization of the distance function (\Cref{assum:distancemetrize}) and a concentration type inequality (\Cref{assum:controlconc}) to establish the data-driven performance guarantees for a family of ambiguity sets.   

Our previous papers on data-driven RMDPs over finite state- and action-spaces \cite{ramani2022robust,ramani2024family} are the closest work to this paper. For the data-driven RMDPs considered in \cite{ramani2022robust,ramani2024family}, asymptotic and finite sample properties were established under conditions similar to \Cref{assum:distancemetrize} and \Cref{assum:controlconc} of this paper. The data-driven performance guarantees established in  \Cref{sec:data-driven_rmdp} of 
this paper can be viewed as an extension of those results to arbitrary Borel spaces. However, the analyses in this paper are distinct and quite sophisticated. Unlike \cite{ramani2022robust,ramani2024family}, the proofs in this paper rely on measure-theoretic concepts and results to establish the stated claims. In addition, the results in this paper rely on a suitable set of assumptions imposed on the components of the MDP model. This was not the case in \cite{ramani2022robust,ramani2024family}, where all data-driven performance guarantees were established without any restriction on the components of the MDP model.

\subsection{Notation and Preliminaries}\label{sec:notations_preliminaries}
Given a metric space $(X,\varrho)$, let $\Sigma_b(X)$ and $C_b(X)$ denote the set of real-valued bounded measurable and continuous functions over $X$, respectively.  For any $f \in \Sigma_b(X)$, the supremum norm $\norm{f}_\infty$ is defined as $\norm{f}_\infty \defeq \sup_{x \in X}\abs{f(x)}$. The Lipschitz seminorm is defined as $\norm{f}_L \defeq \sup_{x \neq y}\abs{f(x) - f(y)}/\varrho(x,y)$. Define $\norm{f}_{BL}$ as $\norm{f}_{BL} \defeq \norm{f}_L + \norm{f}_\infty$. A sequence of probability distributions $\mu_k$ in ${\cal M}(X)$ is  weakly convergent to $\mu \in {\cal M}(X)$ if $\int_X f(x)\mu_k(dx) \to \int_X f(x)\mu(dx)$ for all $f \in C_b(X)$. Define the bounded Lipschitz metric $\beta: {\cal M}(X) \times {\cal M}(X) \to \mathbb{R}_+$ as 
\begingroup
\allowdisplaybreaks
\begin{equation}\label{eqn:betametric}
\beta(\mu,\nu) \defeq \sup_{f:X \to \mathbb{R},\ \norm{f}_{BL} \leq 1}\abs{\int_X f(x)\mu(dx) - \int_X f(x) \nu(dx)}.
\end{equation}
\endgroup
If $X$ is a separable metric space, then $\beta$ defined in \eqref{eqn:betametric} is a metric on ${\cal M}(X)$ and metrizes the topology of weak convergence \cite[Proposition 11.3.2, Theorem 11.3.3]{dudley2018real}. For $\nu \in {\cal M}(X)$ and $N \in \mathbb{N}$, we denote by $\nu_\otimes^N$ the product measure $\otimes_{i=1}^N \nu$ on $X^N$. The notation $\nu_\otimes^\infty$ denotes the product measure $\otimes_{i=1}^\infty \nu$ on $\times_{i=1}^\infty X$. A set-valued mapping (also known as correspondence or multifunction) $\varphi$ from a metric space $X$ to another metric space $Y$ is such that $\varphi(x) \subseteq Y$ for all $x \in X$. The set-valued mapping $\varphi$ is compact-valued if $\varphi(x)$ is compact for all $x \in X$. A compact-valued mapping $\varphi$ is upper hemicontinuous at $x \in X$ if for any sequence $\{(x_k,y_k)\}\ \in \{(x,y)|x \in X, y \in \varphi(x)\}$ with $x_k \to x$, then the sequence $y_k$ has a limit point in $\varphi(x)$ \cite[Theorem 17.20]{aliprantis2006infinite}. It is lower hemicontinuous at $x \in X$ if for all $x_k \to x$ and $y \in \varphi(x)$, there exists a subsequence $x_{k_m}$ of $x_k$ and elements $y_m \in \varphi(x_{k_m})$ for each $m$ such that $y_m \to y$ \cite[Theorem 17.21]{aliprantis2006infinite}. The set-valued map $\varphi$ is continuous (also known as hemicontinuous) at $x \in X$ if it is upper hemicontinuous and lower hemicontinuous at $x$. It is continuous if it is continuous for all $x \in X$. The notation ${\bf 0}$ denotes the real-valued function over $X$ that is identically zero, i.e., ${\bf 0}(x) = 0$ for all $x \in X$.

\section{The robust Markov decision model}\label{sec:rmdp_model}
This section presents the optimality criterion for MDPs and RMDPs, and the associated sufficient conditions for the characterization of those optimality conditions. The results are standard in the literature and presented here in a form suitable for our data-driven setup. We take $\mathbb{X},\mathbb{A},\mathbb{W}$ to be Borel spaces (Borel subsets of complete and separable metric spaces) and endow ${\cal M}(\mathbb{W})$ with the topology of weak convergence. We assume that $\mathbb{K} \defeq \{(x,a)|x \in \mathbb{X}, a \in A(x)\}$ is a measurable subset of $\mathbb{X} \times \mathbb{A}$ and contains the graph of a measurable function from $\mathbb{X}$ to $A$, i.e., there exists a measurable function $f:\mathbb{X} \to \mathbb{A}$ such that $f(x) \in A(x)$ for all $x \in \mathbb{X}$.  
\begin{assum}\label{assum:mdpassum}
\normalfont
The following assumptions hold for the MDP model.
\begin{enumerate}[(a)]
\item The set-valued mapping $\mathbb{X} \ni x \mapsto A(x) \subseteq \mathbb{A}$ is compact-valued and continuous.
\item The system evolution function $F: \mathbb{K} \times  \mathbb{W} \to \mathbb{X}$ is continuous over $\mathbb{K} \times \mathbb{W}$.
\item The single-stage cost function $c: \mathbb{K} \times \mathbb{W} \to \mathbb{R}$ is continuous, bounded, and nonnegative over $\mathbb{K} \times \mathbb{W}$.
\end{enumerate}
\end{assum}

\begin{assum}\label{assum:ambiguitycompact}
\normalfont
The set $\left\{\nu \in {\cal M}(\mathbb{W}):d(\nu,\hat \nu) \leq \epsilon\right\}$ is $\sigma$-weakly compact for every $\hat \nu \in {\cal M}(\mathbb{W})$ and $\epsilon \in [0,\infty)$.
\end{assum}

The above two assumptions are adapted from  \cite[Assumption 3.1]{gonzalez2003minimax} and serve as a sufficient condition for optimality characterization of MDPs and RMDPs.  \Cref{assum:mdpassum} also plays a crucial role in establishing data-driven performance guarantees. The requirements on the components of the MDP model laid out in \Cref{assum:mdpassum} are satisfied by applications in a wide range of areas such as optimal asset selling \cite[Section 3.4]{bertsekas2012dynamic}, wind energy management \cite[Example 7.2]{bauerle2022distributionally}, mold level control \cite[Example 7.1]{gonzalez2003minimax}, and stochastic inventory control \cite[Section 3.2]{bertsekas2012dynamic}.  \Cref{assum:ambiguitycompact} is needed only by RMDPs to derive the robust Bellman equation and other related optimality conditions. \Cref{lem:ballcompactness} in  \Cref{sec:controldist} presents a sufficient condition on the distance function under which \Cref{assum:ambiguitycompact} holds. Several well-known distance functions studied in the literature  satisfy the condition stated in \Cref{lem:ballcompactness}. These include $f$-divergences, Wasserstein distance, bounded Lipschitz metric, and Prokhorov metric. 

The following theorem presents optimality  characterization for RMDPs. This theorem is a recasting of the optimality results derived in Theorems 4.1 and 4.2 in \cite{gonzalez2003minimax} for a more general minimax stochastic control problem. The claims of those two theorems in \cite{gonzalez2003minimax}  depend on the sufficient conditions stated in Assumption 3.1 in that paper. For our data-driven RMDPs, the sufficient conditions outlined in \cite{gonzalez2003minimax} are captured in  \Cref{assum:mdpassum}, \Cref{assum:ambiguitycompact}, and \Cref{lem:transitioncont} (established using   \Cref{assum:mdpassum}).  

\begin{thm}
\normalfont
\cite[Theorem 4.1, 4.2]{gonzalez2003minimax}
\label{thm:robustbellman}
Fix a sample-size $N \geq 1, \epsilon \in [0,\infty)$, and consider the data-driven RMDP problem in \eqref{eqn:robustmdp} (for a fixed realization of $\hat\mu^N$). For any $v \in \Sigma_b(\mathbb{X})$, the robust Bellman operator $\tilde \Phi^{N,\epsilon}$ on $\Sigma_b(\mathbb{X})$ is defined as 
\begingroup
\allowdisplaybreaks
\begin{equation}\label{eqn:robustbellman}
 (\tilde \Phi^{N,\epsilon} v)(x) \defeq \min_{a \in A(x)}\sup_{\nu \in {\cal P}^N(\epsilon)}\Bigg[\int_\mathbb{W} \left(c(x,a,z) + \alpha  v(F(x,a,z))\right)\nu(dz) \Bigg] \ \forall x \in \mathbb{X}.
\end{equation}
\endgroup
For any $t \in \mathbb{N}$, the operator $\tilde \Phi^{N,\epsilon}_t$ on $\Sigma_b(\mathbb{X})$ is defined as $\tilde \Phi^{N,\epsilon}_t v \defeq \tilde \Phi^{N,\epsilon} \tilde \Phi^{N,\epsilon}_{t-1}v$ where $\tilde\Phi^{N,\epsilon}_0 v \defeq  v$. If Assumptions \ref{assum:mdpassum} and \ref{assum:ambiguitycompact} hold, then\\
\noindent (a) The robust optimal value function $\tilde J^{N,\epsilon}$ is the unique fixed point of $\tilde \Phi^{N,\epsilon}$.\\ 
\noindent (b)   There exists a deterministic stationary policy that is robust optimal. A policy $\hat\pi^N \in \Pi$ is robust optimal if and only if it satisfies
\begingroup
\allowdisplaybreaks
\begin{equation}\label{eqn:rmdp_policy_bellman}
\tilde J^{N,\epsilon}(x) = \sup_{\nu \in {\cal P}^N(\epsilon)}\\ \int_\mathbb{W} (c(x,\hat\pi^N(x),z) + \alpha  \tilde J^{N,\epsilon}(F(x,\hat\pi^N(x),z)))\nu(dz) \ \forall x \in \mathbb{X}.
\end{equation}
\endgroup
\noindent (c) $\sup_{x \in \mathbb{X}}\big|(\tilde \Phi_t^{N,\epsilon} {\bf 0})(x)- \tilde J^{N,\epsilon}(x)\big| \leq \norm{c}_\infty \alpha^t/(1 - \alpha) \ \forall t \in \mathbb{N}$.
\end{thm}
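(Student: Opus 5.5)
Since the statement is a recasting of Theorems 4.1 and 4.2 of \cite{gonzalez2003minimax}, the plan is to verify that our data-driven RMDP is an instance of their minimax control model and then transcribe their conclusions. For a fixed realization of $\hat\mu^N$, the adversary's action set is the constant set ${\cal P}^N(\epsilon)\subseteq{\cal M}(\mathbb{W})$. The controlled transition kernel is $Q(B\mid x,a,\nu)=\int_\mathbb{W}\mathbf 1_B(F(x,a,z))\,\nu(dz)$, and the one-stage cost is $\bar c(x,a,\nu)=\int_\mathbb{W} c(x,a,z)\,\nu(dz)$. Their Assumption 3.1 asks for four things. First, compact-valued continuous (or at least upper semicontinuous) admissible action sets for the controller; this is \Cref{assum:mdpassum}(a). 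Second, compact admissible sets for the opponent; this is \Cref{assum:ambiguitycompact} applied with $\hat\nu=\hat\mu^N$. Third, a bounded cost continuous in $(x,a,\nu)$. Fourth, a weakly continuous transition law, meaning that $(x,a,\nu)\mapsto\int v(F(x,a,z))\,\nu(dz)$ is continuous for every $v\in C_b(\mathbb{X})$.

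The key step is therefore the continuity lemma (\Cref{lem:transitioncont}). I would prove it by showing that $(x_k,a_k,\nu_k)\to(x,a,\nu)$ in $\mathbb{K}\times{\cal M}(\mathbb{W})$ implies $\int g(x_k,a_k,z)\,\nu_k(dz)\to\int g(x,a,z)\,\nu(dz)$ for every bounded continuous $g$ on $\mathbb{K}\times\mathbb{W}$. The argument goes through the product measures $\delta_{(x_k,a_k)}\otimes\nu_k$. These converge weakly to $\delta_{(x,a)}\otimes\nu$, because weak convergence of the marginals gives weak convergence of product measures on separable spaces. The claim then follows from the definition of weak convergence. Applying it with $g=c$ gives continuity of $\bar c$, with boundedness inherited from \Cref{assum:mdpassum}(c). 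Applying it with $g=v\circ F$, where $v\in C_b(\mathbb{X})$, gives weak continuity of $Q$, using \Cref{assum:mdpassum}(b).

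With the hypotheses verified, I would argue the three parts as follows.

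\emph{Part (a).} Theorem 4.1 of \cite{gonzalez2003minimax} gives that the minimax value is the unique fixed point of the minimax Bellman operator, which coincides with $\tilde\Phi^{N,\epsilon}$. Uniqueness and existence alone also follow directly: $\tilde\Phi^{N,\epsilon}$ is an $\alpha$-contraction in $\norm{\cdot}_\infty$, since $\sup$ and $\min$ of integrals are $1$-Lipschitz in the integrand.

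\emph{Part (b).} Since the opponent's set does not depend on $(x,a)$, $\nu\mapsto\int(c+\alpha v\circ F)\,d\nu$ is continuous on the compact set ${\cal P}^N(\epsilon)$, so the supremum is attained. By Berge's maximum theorem, $(x,a)\mapsto\sup_\nu[\cdot]$ is continuous whenever $v$ is continuous, and the minimum over $A(x)$ is attained. A measurable selector theorem then yields $\hat\pi^N\in\Pi$ attaining the minimum, and Theorem 4.2 of \cite{gonzalez2003minimax} gives the characterization \eqref{eqn:rmdp_policy_bellman}.

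\emph{Part (c).} This follows from the contraction property and $\norm{\tilde J^{N,\epsilon}}_\infty\le\norm{c}_\infty/(1-\alpha)$:
\[\norm{\tilde\Phi_t^{N,\epsilon}{\bf 0}-\tilde J^{N,\epsilon}}_\infty\le\alpha^t\norm{{\bf 0}-\tilde J^{N,\epsilon}}_\infty\le\norm{c}_\infty\alpha^t/(1-\alpha).\]

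The main obstacle is the measurability and continuity bookkeeping. $\tilde\Phi^{N,\epsilon}$ acts on $\Sigma_b(\mathbb{X})$, not only on continuous functions, so one must check that $\tilde\Phi^{N,\epsilon}v$ is measurable and that the optimal value is a measurable (in fact continuous) function. I would handle this by working on $C_b(\mathbb{X})$, which is closed and invariant under $\tilde\Phi^{N,\epsilon}$ by Berge's theorem. Because it is closed, the fixed point lies in $C_b(\mathbb{X})$, and attainment of the $\min$ in \eqref{eqn:robustbellman} is justified there. For the equality between the game value over history-dependent randomized policies and the fixed point, I would rely on \cite{gonzalez2003minimax} rather than reprove it.
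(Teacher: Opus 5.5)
Your proposal is correct and takes essentially the same route as the paper. The paper gives no independent proof either: it notes that Assumption~\ref{assum:mdpassum}, Assumption~\ref{assum:ambiguitycompact} with $\hat\nu=\hat\mu^N$, and the weak continuity of the transition kernel (Lemma~\ref{lem:transitioncont}) supply the hypotheses of Theorems 4.1--4.2 of \cite{gonzalez2003minimax}; the only differences are that the paper cites that lemma from \cite{gonzalez2003minimax} instead of proving it via product measures, and your added contraction argument for (c) and your restriction to $C_b(\mathbb{X})$ to justify attainment of the $\min$ (more careful than the paper's statement, since for a general $v\in\Sigma_b(\mathbb{X})$ the outer infimum need not be attained) are sound supplements.
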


The usage of ``min'' instead of ``inf'' in the outer optimization problem in the right-hand side of \eqref{eqn:robustbellman} is justified under Assumptions \ref{assum:mdpassum} and \ref{assum:ambiguitycompact}  \cite[Section 2.1]{minjarez2020zero}.

We provide remarks regarding  \Cref{thm:robustbellman}. Since the empirical distribution, $\hat\mu^N$, is a random object, the ambiguity set, ${\cal P}^N(\epsilon)$, is a random set, and hence the minimax problem \eqref{eqn:robustmdp} is a random optimization problem. We use realizations of $\hat\mu^N$ to instantiate ${\cal P}^N(\epsilon)$, and by extension, problem \eqref{eqn:robustmdp}. That is, every realization of $\hat\mu^N$ induces a realization of problem \eqref{eqn:robustmdp}, which is a (fixed) instance of an   RMDP. Under Assumptions \ref{assum:mdpassum} and \ref{assum:ambiguitycompact}, this instance admits an optimality characterization as outlined in \Cref{thm:robustbellman}. Since Assumptions \ref{assum:mdpassum} and \ref{assum:ambiguitycompact} do not place any restrictions on $\hat\mu^N$, \Cref{thm:robustbellman} holds for all realizations of \eqref{eqn:robustmdp}. We overload $\hat\mu^N, {\cal P}^N(\epsilon),  \tilde J^{N,\epsilon}, \hat\pi^N, \tilde \Phi_t^{N,\epsilon}$, etc to denote the random objects as well as their respective
realizations. The specific usage will be clear from the context. Since instances of  \eqref{eqn:robustmdp} are instantiated via realizations of $\hat\mu^N$, the distribution of the random objects is induced by the distribution of $\hat\mu^N$, which in turn is induced by the product measure $\mu_\otimes^N$ on $\mathbb{W}^N$.

A characterization of  adversary's best response
under conditions similar to Assumptions \ref{assum:mdpassum} and \ref{assum:ambiguitycompact} appears in \cite[Theorem 4.18]{glauner2020robust}. Our proofs do not use any structural properties of the adversary's best response to establish the stated claims. Moreover, we treat the data-driven RMDP as a tool to select a policy for the MDP instance with unknown disturbance distribution. Hence, we focus on  the performance of the robust optimal policy, $\hat\pi^N$, on the considered MDP instance.

The next theorem presents the optimality characterization for MDPs.

\begin{thm}
\normalfont
\cite[Chapter 4]{hernandez2012discrete}\label{thm:bellman}
Consider the MDP in \eqref{eqn:mdpoptimality}. For $v \in \Sigma_b(\mathbb{X})$ and $\pi \in \Pi$, define the Bellman operator $\Phi$ and the evaluation operator $\Phi_\pi$ on $\Sigma_b(\mathbb{X})$ as 
\begingroup
\allowdisplaybreaks
\begin{align}\label{eqn:bellman}
 (\Phi v)(x) &\defeq \min_{a \in A(x)}\left[\int_\mathbb{W} \left(c(x,a,z) + \alpha  v(F(x,a,z))\right)\mu(dz) \right] \ \forall x \in \mathbb{X}\\
 \label{eqn:eval}
 (\Phi_\pi v)(x) &\defeq \int_\mathbb{W} \left(c(x,\pi(x),z) + \alpha  v(F(x,\pi(x),z))\right)\mu(dz) \ \forall x \in \mathbb{X}.
\end{align}
\endgroup
For $t \in \mathbb{N}$, the operators $\Phi_t$ and $\Phi_{t,\pi}$ on $\Sigma_b(\mathbb{X})$ are defined as $\Phi_t v \defeq  \Phi \Phi_{t-1}v$ and $\Phi_{t,\pi}v \defeq \Phi_\pi\Phi_{t-1,\pi}v$, where $\Phi_0 v \defeq \Phi_{0,\pi}v \defeq v$. If \Cref{assum:mdpassum} holds, then\\
\noindent (a) The optimal value function $J^*$ is the unique fixed point of $\Phi$.\\
\noindent (b) There exists a deterministic stationary policy that is optimal for  \eqref{eqn:mdpoptimality}. Any $\pi^* \in \Pi$ is an optimal policy if and only if it satisfies 
\begingroup
\allowdisplaybreaks
\begin{equation}\label{eqn:policy_bellman}
J^*(x) = \int_\mathbb{W} (c(x,\pi^*(x),z) +  \alpha  J^*(F(x,\pi^*(x),z)))\mu(dz)  \ \forall x \in \mathbb{X}.
\end{equation}
\endgroup
\noindent (c) $ \sup_{x \in \mathbb{X}, \pi \in \Pi}\abs{(\Phi_{t,\pi} v)(x) - J(\pi,x)} \leq \alpha^t({\norm{c}_\infty}/(1-\alpha) + \norm{v}_\infty) \ \forall t \in \mathbb{N}
 $. 
\end{thm}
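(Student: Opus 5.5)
The plan is to follow the standard contraction-mapping route for discounted MDPs with bounded costs, in the form given in \cite[Chapter 4]{hernandez2012discrete}. First, $\Phi$ and $\Phi_\pi$ map $\Sigma_b(\mathbb{X})$ into itself. For $\Phi_\pi$, measurability of $x \mapsto \int_\mathbb{W} g(x,\pi(x),z)\mu(dz)$ follows from the measurability of $\pi$, $F$, $c$ and Fubini, and the bound $\norm{\Phi_\pi v}_\infty \le \norm{c}_\infty + \alpha\norm{v}_\infty$ is immediate. Both operators are monotone, and $\Phi(v + k) = \Phi v + \alpha k$ for constants $k$, with the same identity for $\Phi_\pi$. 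Together these give that each is an $\alpha$-contraction in the supremum norm on the Banach space $(\Sigma_b(\mathbb{X}),\norm{\cdot}_\infty)$, so each has a unique fixed point by Banach's theorem.

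The main obstacle is that $\Phi$ preserves \emph{measurability}, that the minimum over $A(x)$ is attained, and that a measurable minimizer exists. Here I would not work in all of $\Sigma_b(\mathbb{X})$ at first. Instead I restrict to the closed subspace $C_b(\mathbb{X})$ and use \Cref{assum:mdpassum}. For $v \in C_b(\mathbb{X})$, the map $(x,a) \mapsto \int_\mathbb{W}(c(x,a,z) + \alpha v(F(x,a,z)))\mu(dz)$ is continuous on $\mathbb{K}$. This follows from continuity of $c$ and $F$ together with bounded convergence: if $(x_k,a_k) \to (x,a)$, the integrand converges pointwise in $z$ and is dominated by $\norm{c}_\infty + \alpha\norm{v}_\infty$. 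Berge's maximum theorem then applies, since $A(\cdot)$ is compact-valued and continuous. It shows that $\Phi v$ is continuous and that the minimum is attained, so $\Phi$ maps $C_b(\mathbb{X})$ into itself. A measurable selection theorem for continuous functions over compact-valued continuous correspondences, \cite[Theorem 18.19]{aliprantis2006infinite}, then provides a measurable minimizer $f \in \Pi$ with $\Phi v = \Phi_f v$. Let $V$ be the fixed point of $\Phi$ in $C_b(\mathbb{X})$, obtained by contraction, and take $\pi^*$ to be the minimizer selected for $v = V$.

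Next, I identify $V$ with $J^*$. For any $\pi \in \Pi$, iterating gives $\Phi_{t,\pi}v(x) = \mathbb{E}^\pi[\sum_{s=1}^t \alpha^{s-1}c + \alpha^t v(x_{t+1})]$ by induction using the Markov property. Part (c) follows from this directly: $|\Phi_{t,\pi}v(x) - J(\pi,x)| \le \sum_{s>t}\alpha^{s-1}\norm{c}_\infty + \alpha^t\norm{v}_\infty$, uniformly in $x$ and $\pi$. It also follows that $J(\pi,\cdot)$ is the unique fixed point of $\Phi_\pi$. Hence $J(\pi^*,\cdot) = V$, since $\Phi_{\pi^*}V = \Phi V = V$, and so $J^* \le V$. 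For the reverse inequality over arbitrary $\pi \in \Pi_{\text{HR}}$, I would show by induction on $t$ that the $t$-stage expected cost under any history-dependent randomized policy, with terminal reward $\alpha^t V(x_{t+1})$, is at least $V(x)$. The inductive step conditions on $h_t$ and uses $\int(c + \alpha V\circ F)\,d\mu \ge \Phi V = V$ for every action. Letting $t \to \infty$ gives $J(\pi,x) \ge V(x)$. So $J^* = V$, which proves (a); uniqueness in all of $\Sigma_b(\mathbb{X})$ holds because $\Phi$ is a contraction there, granted measurability, which the same selection argument supplies.

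Finally, (b). Existence of an optimal policy is given by $\pi^*$ above. For the characterization, any $\pi \in \Pi$ satisfying \eqref{eqn:policy_bellman} has $\Phi_\pi J^* = J^*$, so by uniqueness of the fixed point of $\Phi_\pi$ we get $J(\pi,\cdot) = J^*$, i.e.\ $\pi$ is optimal. Conversely, if $\pi$ is optimal, then $J^* = J(\pi,\cdot) = \Phi_\pi J(\pi,\cdot) = \Phi_\pi J^*$, which is exactly \eqref{eqn:policy_bellman}.
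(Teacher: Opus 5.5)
Your proof is correct. It is the standard argument (contraction, Berge's maximum theorem, measurable selection, then verification over $\Pi_{\text{HR}}$) that the paper does not reproduce and instead delegates to its citation of \cite[Chapter 4]{hernandez2012discrete}. One correction: the selection argument does not make $\Phi v$ well defined or measurable for a merely measurable $v \in \Sigma_b(\mathbb{X})$, since the integrand need not be continuous in $a$ and the minimum may not be attained. You don't need that, because if $u \in \Sigma_b(\mathbb{X})$ is a fixed point, the bound $\abs{\min_{a} f(a) - \min_{a} g(a)} \leq \sup_{a}\abs{f(a)-g(a)}$ applied at each $x$ already gives $\norm{u - J^*}_\infty \leq \alpha \norm{u - J^*}_\infty$, hence $u = J^*$.
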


The usage of ``min'' instead of ``inf'' in the right-hand side of \eqref{eqn:bellman} is a consequence of \Cref{assum:mdpassum} imposed on the components of the MDP \cite[Theorem 3.3.5]{hernandez2012discrete}.

\section{Data-driven performance guarantees}\label{sec:data-driven_rmdp}
In \Cref{sec:valueconv}, we analyze the asymptotic properties of data-driven RMDPs as the sample-size $N \to \infty$. \Cref{sec:probguarantee} establishes the probabilistic guarantee on the out-of-sample value function. The convergence rate and sample complexity results are presented in \Cref{sec:conv_rate} and \Cref{sec:sample_complexity}, respectively. Finally, the out-of-distribution perfomance claims are analyzed in \Cref{sec:out_of_dist_bounds}. 

The data-driven performance guarantees established in this section also hold for nonstationary finite-horizon problems. We skip stating and deriving them for brevity.

\subsection{Value convergence to true optimal value}\label{sec:valueconv}
The asymptotic convergence results rely on the following assumption on the distance function.

\begin{assum}\label{assum:distancemetrize}
 \normalfont
 The distance function satisfies the property that $\lim_{k \to \infty}d(\nu^k,\rho^k) = 0$ implies $\lim_{k \to \infty}\beta(\nu^k,\rho^k) = 0$ for all sequences $\{\nu^k\}, \{\rho^k\} \in {\cal M}(\mathbb{W})$.
\end{assum}

\Cref{assum:distancemetrize} postulates that convergence of probability distributions with respect to the distance function ensures convergence with respect to the bounded Lipschitz metric. We present the motivation behind \Cref{assum:distancemetrize}. Since the empirical distribution, $\hat \mu^N$, varies with the sample-size, $N$, the ambiguity sets, ${\cal P}^N(\epsilon)$, also keep moving with $N$. Moreover, $\hat \mu^N$ weakly converges to $\mu$ \cite[Theorem 11.4.1]{dudley2018real}. Hence, it seems like ${\cal P}^N(\epsilon)$ converges (in an appropriate sense) to $\mu$ if $\epsilon$ goes to $0$ with increasing $N$. This reasoning would be true if distributions that are close to one another with respect to the distance function are also close in the topology of weak convergence, which is the statement of \Cref{assum:distancemetrize}. Hence, if $\epsilon \to 0$ as $N \to \infty$ and the distance function satisfies  \Cref{assum:distancemetrize}, we can expect ${\cal P}^N(\epsilon)$ to move and shrink simultaneously, and eventually converge to $\mu$. This intuition is formalized in \Cref{lem:ballconv}.  Since direct verification of \Cref{assum:distancemetrize} is challenging, the following assumption serves as a sufficient condition for \Cref{assum:distancemetrize}.
\begin{assum}\label{assum:betametrize}
\normalfont
There exists a continuous function $\psi:\mathbb{R}_+ \to \mathbb{R}_+$ with $\psi(0) = 0$ such that $\beta(\nu_1,\nu_2) \leq \psi(d(\nu_1,\nu_2))$ for all $\nu_1,\nu_2\in {\cal M}(\mathbb{W})$.
\end{assum}

\begin{lem}\label{lem:betametrize}
 \normalfont
 If \Cref{assum:betametrize} holds, then \Cref{assum:distancemetrize} holds.
\end{lem}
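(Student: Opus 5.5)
The argument is a direct sequential-continuity one. Fix arbitrary sequences $\{\nu^k\}, \{\rho^k\}$ in ${\cal M}(\mathbb{W})$ with $\lim_{k\to\infty} d(\nu^k,\rho^k) = 0$. The goal is to show $\beta(\nu^k,\rho^k)\to 0$.

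First, since $d(\nu^k,\rho^k)\to 0$ and $d$ takes values in $[0,\infty]$, there is some $k_0$ such that $d(\nu^k,\rho^k)$ is finite for all $k\ge k_0$. For those $k$ the quantity $\psi(d(\nu^k,\rho^k))$ is well defined; this is the one point needing care, since $\psi$ is only defined on $\mathbb{R}_+$. Discarding finitely many terms does not affect limits, so we may assume $d(\nu^k,\rho^k)\in\mathbb{R}_+$ for all $k$.

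Next, \Cref{assum:betametrize} gives, for every such $k$,
\begin{equation*}
0 \le \beta(\nu^k,\rho^k) \le \psi\bigl(d(\nu^k,\rho^k)\bigr).
\end{equation*}
The lower bound holds because $\beta$ is a supremum of absolute values, by \eqref{eqn:betametric}. Since $\psi$ is continuous at $0$ and $\psi(0)=0$, the convergence $d(\nu^k,\rho^k)\to 0$ implies $\psi(d(\nu^k,\rho^k))\to \psi(0)=0$. By the squeeze theorem, $\beta(\nu^k,\rho^k)\to 0$, which is exactly \Cref{assum:distancemetrize}.

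There is no real obstacle here. The only subtlety is the possibility that $d=\infty$ for early terms, and it is handled by passing to a tail of the sequence as described above. Note that only continuity of $\psi$ at $0$ is used; full continuity on $\mathbb{R}_+$ is not needed.
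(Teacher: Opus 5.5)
Your proof is correct and is essentially the intended argument. The paper skips the proof and defers to the analogous finite-space lemma in \cite{ramani2022robust}, which is the same squeeze $0 \le \beta(\nu^k,\rho^k) \le \psi(d(\nu^k,\rho^k)) \to \psi(0) = 0$ using continuity of $\psi$ at $0$. Your step of discarding the finitely many terms where $d = \infty$, so that $\psi(d)$ is defined, handles a point the paper's statement of the assumption glosses over.
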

The proof of \Cref{lem:betametrize} follows via similar proof arguments of \cite[Lemma 1]{ramani2022robust}, and hence skipped. Assumptions \ref{assum:distancemetrize} and \ref{assum:betametrize} are adaptations of the ``metrizing'' property of the distance function presented in \cite[Assumption 1, 2]{ramani2022robust} for probability distributions defined on finite sample spaces. \Cref{assum:betametrize} is satisfied by many well-known distance functions studied in the literature, including TV, KL, Hellinger, $\chi^2$, Wasserstein, bounded Lipschitz metric, and Prokhorov metric. For these distances, the formula for the corresponding function $\psi(\cdot)$ is presented in  \Cref{sec:controldist}.

We now state the robust value function convergence result. For the sake of presentation, the proofs of the results in this paper along with the supporting claims are presented in the appendices.

\begin{thm}\label{thm:robustconvergence}
\normalfont
Suppose the radii of the ambiguity sets depend on the sample-size $N$ such that $\lim_{N \to \infty}\epsilon^N = 0$. If Assumptions \ref{assum:mdpassum}, \ref{assum:ambiguitycompact}, and \ref{assum:distancemetrize} hold, then $\mu_\otimes^\infty \big[\lim_{N \to \infty}\tilde J^{N,\epsilon^N}(x) = J^*(x) \ \forall x \in \mathbb{X}\big] = 1$.
\end{thm}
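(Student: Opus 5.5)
We work on the probability-one event $\Omega_0$ on which $\hat\mu^N$ converges weakly to $\mu$; this event has full measure by \cite[Theorem 11.4.1]{dudley2018real}, since $\mathbb{W}$ is separable. Fix a realization in $\Omega_0$. Then $\beta(\hat\mu^N,\mu)\to 0$. Take any sequence $\nu^N\in{\cal P}^N(\epsilon^N)$. Because $d(\nu^N,\hat\mu^N)\le\epsilon^N\to0$, \Cref{assum:distancemetrize} gives $\beta(\nu^N,\hat\mu^N)\to0$, and the triangle inequality for $\beta$ then gives $\beta(\nu^N,\mu)\to0$. So every selection from the ambiguity sets converges weakly to $\mu$; this is the content anticipated in \Cref{lem:ballconv}. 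Moreover, since $\hat\mu^N\in{\cal P}^N(\epsilon^N)$ whenever $d(\nu,\nu)=0$, and more generally by nonemptiness, such selections always exist.

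The comparison with $J^*$ goes through the finite-horizon iterates. By \Cref{thm:robustbellman}(c) and the analogous bound $\|\Phi_t{\bf 0}-J^*\|_\infty\le\|c\|_\infty\alpha^t/(1-\alpha)$ (a contraction consequence of \Cref{thm:bellman}(a)),
\[
|\tilde J^{N,\epsilon^N}(x)-J^*(x)|\le \frac{2\|c\|_\infty\alpha^t}{1-\alpha}+|(\tilde\Phi^{N,\epsilon^N}_t{\bf 0})(x)-(\Phi_t{\bf 0})(x)|.
\]
It therefore suffices to show that, for each fixed $t$ and every $x$, $(\tilde\Phi^{N,\epsilon^N}_t{\bf 0})(x)\to(\Phi_t{\bf 0})(x)$ as $N\to\infty$; afterwards we let $t\to\infty$. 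We prove this by induction on $t$, using a stronger, continuous-convergence form of the hypothesis: if $x_N\to x$ then $v_t^N(x_N)\to v_t(x)$, where $v^N_t\defeq\tilde\Phi^{N,\epsilon^N}_t{\bf 0}$ and $v_t\defeq\Phi_t{\bf 0}$. Along the way we show that $v_t$ is continuous; this holds under \Cref{assum:mdpassum} by the Berge maximum theorem together with weak continuity of the transition, as in \Cref{lem:transitioncont}. All functions involved are uniformly bounded by $\|c\|_\infty/(1-\alpha)$.

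For the inductive step, define $g^N(x,a,z)\defeq c(x,a,z)+\alpha v^N_{t}(F(x,a,z))$ and let $g$ be its limit counterpart. By the induction hypothesis and the continuity of $c$ and $F$, we have $g^N(x_N,a_N,z_N)\to g(x,a,z)$ whenever $(x_N,a_N,z_N)\to(x,a,z)$. A generalized weak-convergence lemma (e.g.\ Serfling / Langen: if $\nu^N\Rightarrow\mu$ and $h_N$ are uniformly bounded and converge continuously to $h$, then $\int h_N\,d\nu^N\to\int h\,d\mu$) then yields $\int g^N(x_N,a_N,\cdot)\,d\nu^N\to\int g(x,a,\cdot)\,d\mu$ for any $a_N\to a$ and any $\nu^N\in{\cal P}^N(\epsilon^N)$.

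Next consider $G^N(x,a)\defeq\sup_{\nu\in{\cal P}^N(\epsilon^N)}\int g^N\,d\nu$. Choosing $\nu^N$ to be $1/N$-optimal in this supremum, the previous step gives $G^N(x_N,a_N)\to G(x,a)\defeq\int g(x,a,\cdot)\,d\mu$. This works because the supremum is squeezed: every selection has the same limit.

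For the outer minimization, we prove both inequalities for the minimum over $A(x_N)$. For $\limsup$, lower hemicontinuity of $A$ provides $a_N\in A(x_N)$ with $a_N\to a^*$, where $a^*$ is a minimizer at $x$; this needs care with subsequences, so we argue by subsequence-of-subsequence. For $\liminf$, we take minimizers $a_N\in A(x_N)$; upper hemicontinuity and compactness give a subsequence converging to some point of $A(x)$. Together these give $v^N_{t+1}(x_N)\to v_{t+1}(x)$.

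The main obstacle is the middle step: handling the supremum over the moving, shrinking ambiguity sets jointly with the continuous convergence of the integrands, since $v^N_t$ need not be continuous. That is why the induction must carry continuous convergence, rather than pointwise convergence, and rely on the generalized Lebesgue/weak-convergence lemma.
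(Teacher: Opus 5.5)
Your proposal is correct and follows essentially the paper's proof: the same almost-sure event from \Cref{lem:ballconv}, the same reduction to the finite-horizon iterates $\tilde\Phi^{N,\epsilon^N}_t{\bf 0}$ via \Cref{thm:robustbellman}(c), and the same induction carrying convergence along all sequences $x^N\to x$, using the Langen--Serfozo lemma (\Cref{lem:serfozo}) and a near-maximizer squeeze over the ambiguity sets. The only minor difference is the outer minimization: the paper bounds $\sup_{a\in A(x^N)}$ of the difference at the common state $x^N$ and then uses continuity of $\Phi_t{\bf 0}$ via Berge's theorem, whereas you prove the $\limsup$ and $\liminf$ inequalities directly from lower and upper hemicontinuity of $A(\cdot)$.
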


\begin{cor}\label{cor:robustintegralconv}
\normalfont
Suppose the radii of the ambiguity sets depend on the sample-size $N$ such that $\lim_{N \to \infty}\epsilon^N = 0$. Let $\rho \in {\cal M}(\mathbb{X})$ be an initial state distribution over $\mathbb{X}$. If Assumptions \ref{assum:mdpassum}, \ref{assum:ambiguitycompact}, and \ref{assum:distancemetrize} hold, then $\mu_\otimes^\infty\big[\lim_{N \to \infty}\int_{\mathbb{X}}\tilde J^{N,\epsilon^N}(x) \rho(dx) = \int_\mathbb{X} J^*(x) \rho(dx)\big] = 1$. 
\end{cor}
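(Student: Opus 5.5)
The plan is to derive the corollary from \Cref{thm:robustconvergence} by dominated convergence, applied separately on each sample path in the almost-sure event that theorem supplies.

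Let $\Omega_0 \subseteq \times_{i=1}^\infty \mathbb{W}$ be the event on which $\lim_{N\to\infty}\tilde J^{N,\epsilon^N}(x) = J^*(x)$ for every $x\in\mathbb{X}$. By \Cref{thm:robustconvergence}, $\mu_\otimes^\infty[\Omega_0]=1$. Fix a realization in $\Omega_0$. Then the functions $\tilde J^{N,\epsilon^N}$ converge pointwise on all of $\mathbb{X}$ to $J^*$. The full pointwise statement (``for all $x$'' inside the probability) is what makes it possible to integrate against an arbitrary $\rho$ without worrying about a $\rho$-null exceptional set that depends on the sample.

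Next I will check measurability and a uniform bound. By \Cref{thm:robustbellman}(a), $\tilde J^{N,\epsilon^N}$ is the fixed point of $\tilde \Phi^{N,\epsilon^N}$ acting on $\Sigma_b(\mathbb{X})$, so it is bounded and measurable. Likewise, $J^* \in \Sigma_b(\mathbb{X})$ by \Cref{thm:bellman}(a). For the bound, since $c$ is nonnegative and bounded by \Cref{assum:mdpassum}(c), every expected discounted cost under any pair $(\pi,\gamma)$ lies in $[0,\norm{c}_\infty/(1-\alpha)]$. Hence $0\le \tilde J^{N,\epsilon^N}(x)\le \norm{c}_\infty/(1-\alpha)$ for all $N$ and $x$. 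The same bound also follows from \Cref{thm:robustbellman}(c), because $\tilde\Phi_t^{N,\epsilon^N}\mathbf{0}$ is sandwiched in $[0,\norm{c}_\infty/(1-\alpha)]$. This constant is $\rho$-integrable because $\rho$ is a probability measure.

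Finally, the dominated convergence theorem gives $\int_\mathbb{X}\tilde J^{N,\epsilon^N}d\rho\to\int_\mathbb{X} J^*d\rho$ on each path in $\Omega_0$. The event in the corollary therefore contains $\Omega_0$ and has probability one. If that event is not known to be measurable, it is enough to note that it contains a measurable set of full measure, so the statement holds under the completion of $\mu_\otimes^\infty$.

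The main point needing care is the measurability of $\tilde J^{N,\epsilon^N}$ in $x$, which is required for the integral to make sense. I will rely on \Cref{thm:robustbellman}, which places the fixed point in $\Sigma_b(\mathbb{X})$; the remaining steps are routine.
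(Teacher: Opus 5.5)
Your proposal is correct and follows the paper's proof: take the full-measure event from \Cref{thm:robustconvergence}, use the uniform bound $\norm{c}_\infty/(1-\alpha)$ that comes from the boundedness of $c$, and apply dominated convergence on each sample path. Your remarks on the measurability of $\tilde J^{N,\epsilon^N}$ (via \Cref{thm:robustbellman}) and on passing to the completion are careful additions that the paper leaves implicit.
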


Recall from \Cref{sec:introduction}, the out-of-sample value function, $J(\hat\pi^N,x) \ \forall x \in \mathbb{X}$, is the value of the robust optimal policy $\hat\pi^N$ in the MDP with true disturbance distribution, i.e., $\mu$. The next result establishes the convergence of $J(\hat\pi^N,x)$ to $J^*(x)$, as $N \to \infty$.

\begin{thm}\label{thm:oosconvergence}
\normalfont
Suppose the radii of the ambiguity sets depend on the sample-size $N$ such that $\lim_{N\rightarrow \infty}\epsilon^N = 0$. If Assumptions \ref{assum:mdpassum}, \ref{assum:ambiguitycompact}, and \ref{assum:distancemetrize} hold, then $\mu_\otimes^\infty\big[\lim_{N \to \infty}J(\hat\pi^N, x) = J^*(x) \ \forall x \in \mathbb{X}\big] = 1$.
\end{thm}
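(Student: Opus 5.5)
We work on the probability-one event $\Omega_0$ on which $\hat\mu^N$ converges weakly to $\mu$ \cite[Theorem 11.4.1]{dudley2018real}. On this event, Theorem~\ref{thm:robustconvergence} gives $\tilde J^{N,\epsilon^N}\to J^*$ pointwise. The argument is deterministic on $\Omega_0$. Since $J^*(x)\le J(\hat\pi^N,x)$ always holds, it suffices to prove $\limsup_N J(\hat\pi^N,x)\le J^*(x)$ for every $x\in\mathbb{X}$.

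The plan is to compare the evaluation operator $\Phi_{\hat\pi^N}$ under the true distribution $\mu$ with the robust policy equation \eqref{eqn:rmdp_policy_bellman}. Write $\tilde J^N:=\tilde J^{N,\epsilon^N}$ and define the defect
\[
 e^N(x):=\int_\mathbb{W}\big(c(x,\hat\pi^N(x),z)+\alpha\tilde J^N(F(x,\hat\pi^N(x),z))\big)\mu(dz)-\tilde J^N(x).
\]
Then $\Phi_{\hat\pi^N}\tilde J^N=\tilde J^N+e^N$. Because $\Phi_{\hat\pi^N}$ is a monotone $\alpha$-contraction with fixed point $J(\hat\pi^N,\cdot)$, iterating gives
\[
 J(\hat\pi^N,x)-\tilde J^N(x)=\sum_{t\ge 0}\alpha^t\,\mathbb{E}^{\hat\pi^N}_\mu\big[e^N(x_{t+1})\mid x_1=x\big].
\]
By \eqref{eqn:rmdp_policy_bellman}, $e^N(x)\le \int g^N_x\,d\mu-\sup_{\nu\in{\cal P}^N(\epsilon^N)}\int g^N_x\,d\nu$, where $g^N_x(z):=c(x,\hat\pi^N(x),z)+\alpha\tilde J^N(F(x,\hat\pi^N(x),z))$. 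Since $\hat\mu^N\in{\cal P}^N(\epsilon^N)$, this yields $e^N(x)\le\int g^N_x\,d\mu-\int g^N_x\,d\hat\mu^N$. So the task reduces to showing that this empirical-process error vanishes, and it must do so uniformly enough to pass through the expectation over the trajectory of $\hat\pi^N$.

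This uniformity is the main obstacle. The integrands $g^N_x$ vary with $N$, and $\tilde J^N$ need not be continuous. Moreover, weak convergence of $\hat\mu^N$ does not give uniform convergence over an arbitrary family of bounded measurable functions, and the state $x_{t+1}$ ranges over all of $\mathbb{X}$. The first thing I would try is a finite-horizon truncation to remove $\tilde J^N$ from the integrand.

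The truncation goes as follows. By Theorem~\ref{thm:robustbellman}(c) and Theorem~\ref{thm:bellman}(c), it suffices to work with $T$-stage iterates, at a cost of $O(\alpha^T)$. I would then show by induction on $T$ that the $T$-step robust iterates $\tilde\Phi^{N,\epsilon^N}_T{\bf 0}$ converge to $\Phi_T{\bf 0}$ continuously, meaning $v_N(x_N)\to v(x)$ whenever $x_N\to x$. This uses Assumption~\ref{assum:distancemetrize}: any $\nu^N\in{\cal P}^N(\epsilon^N)$ converges weakly to $\mu$ because $\beta(\nu^N,\hat\mu^N)\to0$. The convergence statement I expect to be Lemma~\ref{lem:ballconv}. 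The induction also uses the continuity of $F$ and $c$ together with the continuity of $A(\cdot)$, through a Berge-type argument.

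Continuous convergence on compacts combined with the tightness of $\{\hat\mu^N\}$ then gives $\int g^N_{x_N}\,d\hat\mu^N-\int g^N_{x_N}\,d\mu\to0$ along convergent $x_N$, with actions $\hat\pi^N(x_N)$ in the compact set $A$ near $x$. To get the expectation over trajectories I would avoid the uniform bound altogether. Instead, I would prove directly by induction on $T$ that $\Phi_{T,\hat\pi^N}{\bf 0}(x)-\tilde\Phi^{N,\epsilon^N}_T{\bf 0}(x)\to0$ continuously in $x$. The inductive step is that the composition of the kernel $P^{\hat\pi^N}(\cdot\mid x_N)$ with a continuously convergent bounded sequence of functions again converges continuously. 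I would establish this using the compactness of $A(x)$: extract a subsequence of actions $\hat\pi^N(x_N)\to a$, then apply the continuity of $F$ together with the weak-convergence version of the continuous mapping theorem. Taking $T\to\infty$ and applying Theorem~\ref{thm:robustconvergence} concludes the proof.
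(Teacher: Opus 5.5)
There is a genuine gap in the step you finally commit to. The claim that, for each fixed $T$, $(\Phi_{T,\hat\pi^N}{\bf 0})(x)-(\tilde\Phi^{N,\epsilon^N}_T{\bf 0})(x)\to 0$ is false in general. Since $\tilde\Phi^{N,\epsilon^N}_T{\bf 0}\to\Phi_T{\bf 0}$, your claim would force the $T$-stage value of $\hat\pi^N$ under $\mu$ to converge to the $T$-stage \emph{optimal} value. But $\hat\pi^N$ is optimal for the infinite-horizon robust problem, and it bears no relation to the $T$-stage robust recursion.

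Here is a counterexample. Take $\mathbb{W}$ to be a singleton, so that ${\cal P}^N(\epsilon)=\{\mu\}$ and $\hat\pi^N$ is simply an optimal policy; all assumptions then hold trivially on finite spaces. At state $0$, let action $1$ cost $0$ and lead to an absorbing state with stage cost $1$. Let action $2$ cost $1$ and lead to an absorbing cost-free state. For $\alpha>1/2$ we get $\hat\pi^N(0)=2$, hence $(\Phi_{1,\hat\pi^N}{\bf 0})(0)-(\tilde\Phi^{N,\epsilon^N}_1{\bf 0})(0)=1-0=1$ for every $N$.

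What is true is only $\limsup_N\abs{\,\cdot\,}\le 2\alpha^T\norm{c}_\infty/(1-\alpha)$. An induction that propagates exact continuous convergence cannot produce such a bound: in your inductive step nothing forces the limit $a$ of $\hat\pi^N(x_N)$ to minimize the $T$-stage $Q$-function. The root cause is that truncating with terminal function ${\bf 0}$ discards the only link between $\hat\pi^N$ and the robust problem. That link is \eqref{eqn:rmdp_policy_bellman}, which is stated in terms of $\tilde J^{N,\epsilon^N}$.

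The missing idea is a way to keep that link, and the paper does it in two steps. First, it shows that every limit point $a^*$ of $\hat\pi^N(x^N)$ is $J^*$-greedy, i.e.\ $J^*(x)=\int_\mathbb{W}(c(x,a^*,z)+\alpha J^*(F(x,a^*,z)))\mu(dz)$. It does so by passing to the limit in \eqref{eqn:rmdp_policy_bellman} with a $\delta$-maximizer $\bar\nu^N\in{\cal P}^N(\epsilon^N)$, using Lemma~\ref{lem:serfozo} together with the stronger fact proved for Theorem~\ref{thm:robustconvergence}: $\tilde J^{N,\epsilon^N}(y^N)\to J^*(y)$ whenever $y^N\to y$. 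Second, it truncates with terminal function $J^*$ instead of ${\bf 0}$, so that $(\Phi_{t,\hat\pi^N}J^*)(x^N)\to(\Phi_tJ^*)(x)=J^*(x)$ follows by induction.

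Alternatively, your own defect decomposition can be completed without removing $\tilde J^{N,\epsilon^N}$ from the integrand. The continuous convergence above makes continuity of $\tilde J^{N,\epsilon^N}$ unnecessary. Lemma~\ref{lem:serfozo} then gives $\limsup_N e^N(x_N)\le 0$ whenever $x_N\to x$. For this, compare with any fixed $\nu^N\in{\cal P}^N(\epsilon^N)$ rather than $\hat\mu^N$, which avoids assuming $d(\hat\mu^N,\hat\mu^N)=0$. A reverse-Fatou argument, applied along subsequences on which $\hat\pi^N(x_N)$ converges, propagates $\limsup\le 0$ through finitely many applications of the $\mu$-kernel under $\hat\pi^N$. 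The tail of your series beyond $T$ is $O(\alpha^T)$ uniformly in $N$. This variant never needs to identify the limit actions as optimal.
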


\begin{cor}\label{cor:oosintegralconv}
\normalfont
Suppose the radii of the ambiguity sets depend on the sample-size $N$ such that $\lim_{N \to \infty}\epsilon^N = 0$. Let $\rho \in {\cal M}(\mathbb{X})$ be an initial state distribution over $\mathbb{X}$. If Assumptions \ref{assum:mdpassum}, \ref{assum:ambiguitycompact}, and \ref{assum:distancemetrize} hold, then $\mu_\otimes^\infty\big[\lim_{N \to \infty}\int_{\mathbb{X}} J(\hat\pi^N,x) \rho(dx) = \int_\mathbb{X} J^*(x) \rho(dx)\big] = 1$. 
\end{cor}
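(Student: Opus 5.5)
The plan is to deduce \Cref{cor:oosintegralconv} from \Cref{thm:oosconvergence} by a dominated convergence argument, applied pathwise on a probability-one event. Let $\Omega_0 \subseteq \times_{i=1}^\infty \mathbb{W}$ be the event from \Cref{thm:oosconvergence}, on which $\lim_{N\to\infty} J(\hat\pi^N,x) = J^*(x)$ for every $x \in \mathbb{X}$; by that theorem, $\mu_\otimes^\infty[\Omega_0]=1$. We fix a sample path in $\Omega_0$. This determines a deterministic sequence of robust optimal policies $\hat\pi^N \in \Pi$, supplied by \Cref{thm:robustbellman}(b), and along this path $J(\hat\pi^N,\cdot)$ converges pointwise to $J^*$ on all of $\mathbb{X}$.

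Next we check the two hypotheses of dominated convergence with respect to $\rho$. For measurability, $\hat\pi^N$ is a deterministic stationary, hence measurable, policy. Therefore $J(\hat\pi^N,\cdot)$ is the fixed point of the evaluation operator $\Phi_{\hat\pi^N}$, and it is the uniform limit of the iterates $\Phi_{t,\hat\pi^N}\mathbf{0}$ by \Cref{thm:bellman}(c). Each iterate lies in $\Sigma_b(\mathbb{X})$, so $J(\hat\pi^N,\cdot)$ is measurable, and $J^*$ is measurable as the fixed point in \Cref{thm:bellman}(a). For domination, \Cref{assum:mdpassum}(c) says the cost is nonnegative and bounded, which gives $0 \le J(\hat\pi^N,x) \le \norm{c}_\infty/(1-\alpha)$ for all $x$ and $N$. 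The constant $\norm{c}_\infty/(1-\alpha)$ is $\rho$-integrable because $\rho$ is a probability measure.

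The dominated convergence theorem then gives $\int_{\mathbb{X}} J(\hat\pi^N,x)\rho(dx) \to \int_{\mathbb{X}} J^*(x)\rho(dx)$ on every path in $\Omega_0$. Hence the event in the corollary contains $\Omega_0$ and has probability one. The event's own measurability is not needed beyond this containment, or we can read it with respect to the completed measure.

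The only step that needs care is measurability of $x\mapsto J(\hat\pi^N,x)$ for a fixed realization, and the argument above handles it through \Cref{thm:bellman}(c). Everything else is routine.
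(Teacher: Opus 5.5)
Your proposal is correct and follows the paper's route. The paper proves this corollary exactly as it proves Corollary \ref{cor:robustintegralconv}: it combines the almost-sure pointwise convergence from Theorem \ref{thm:oosconvergence} with the uniform bound $\norm{c}_\infty/(1-\alpha)$ coming from the bounded cost, applies dominated convergence pathwise, and leaves implicit the measurability check you carry out via \Cref{thm:bellman}(c).
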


\subsection{Probabilistic guarantee on the performance of robust optimal policy}\label{sec:probguarantee}
Though \cref{thm:oosconvergence} establishes the convergence of the out-of-sample value function to the true optimal value function, a decision-maker may also want to compute the out-of-sample value function for finite sample-sizes. However, the out-of-sample value function cannot be determined in practice since it requires the knowledge of the unknown true distribution. Hence, we settle for determining an upper bound on the out-of-sample value function that holds with a high probability for  finite sample-sizes. In this section, we demonstrate how to design the data-driven ambiguity sets so that the robust optimal value function serves as a high probability upper bound for the out-of-sample value function.

\begingroup
\allowdisplaybreaks
\begin{prop}\label{prop:prob_perf_pointwise}
 \normalfont
 Fix a sample-size $N\geq 1$ and $\epsilon \in [0,\infty)$. Suppose Assumptions \ref{assum:mdpassum} and  \ref{assum:ambiguitycompact}  hold. If $d(\mu,\hat\mu^N) \leq \epsilon$ (i.e., $\mu \in {\cal P}^N(\epsilon)$), then $J(\hat\pi^N,x) \leq \tilde J^{N,\epsilon}(x) \ \forall x \in \mathbb{X}$.
\end{prop}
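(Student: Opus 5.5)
The plan is a monotonicity argument for the policy evaluation operator $\Phi_{\hat\pi^N}$ of \Cref{thm:bellman}, with the robust optimal value function $\tilde J^{N,\epsilon}$ serving as a supersolution. We fix a realization of $\hat\mu^N$ with $\mu \in {\cal P}^N(\epsilon)$. We also take $\hat\pi^N \in \Pi$ to be a deterministic stationary robust optimal policy, which exists by \Cref{thm:robustbellman}(b).

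The first step is to show $\Phi_{\hat\pi^N}\tilde J^{N,\epsilon} \leq \tilde J^{N,\epsilon}$ pointwise. By \eqref{eqn:rmdp_policy_bellman}, for every $x \in \mathbb{X}$ we have
\begin{equation*}
\tilde J^{N,\epsilon}(x) = \sup_{\nu \in {\cal P}^N(\epsilon)}\int_\mathbb{W} \big(c(x,\hat\pi^N(x),z) + \alpha \tilde J^{N,\epsilon}(F(x,\hat\pi^N(x),z))\big)\nu(dz).
\end{equation*}
Since $\mu \in {\cal P}^N(\epsilon)$, the supremum is at least the value of the integrand at $\nu = \mu$, and that value is exactly $(\Phi_{\hat\pi^N}\tilde J^{N,\epsilon})(x)$ by \eqref{eqn:eval}. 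The integrals are well defined because $c$ is bounded and continuous, $F$ is continuous (so the composition is measurable), and $\tilde J^{N,\epsilon} \in \Sigma_b(\mathbb{X})$ by \Cref{thm:robustbellman}(a).

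The second step is to iterate this inequality. The operator $\Phi_{\hat\pi^N}$ is monotone: if $u \leq v$ pointwise, then $\Phi_{\hat\pi^N}u \leq \Phi_{\hat\pi^N}v$, because $\alpha > 0$ and integration against $\mu$ preserves order. Induction then gives $\Phi_{t,\hat\pi^N}\tilde J^{N,\epsilon} \leq \tilde J^{N,\epsilon}$ for all $t \in \mathbb{N}$.

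The third step is to pass to the limit $t \to \infty$. By \Cref{thm:bellman}(c) with $v = \tilde J^{N,\epsilon}$, we have $\sup_x |(\Phi_{t,\hat\pi^N}\tilde J^{N,\epsilon})(x) - J(\hat\pi^N,x)| \leq \alpha^t(\norm{c}_\infty/(1-\alpha) + \norm{\tilde J^{N,\epsilon}}_\infty)$, and the right-hand side tends to $0$. Hence $J(\hat\pi^N,x) = \lim_t (\Phi_{t,\hat\pi^N}\tilde J^{N,\epsilon})(x) \leq \tilde J^{N,\epsilon}(x)$ for every $x$.

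The only delicate point is the measurability and boundedness of $\tilde J^{N,\epsilon}$, which is needed for $\Phi_{\hat\pi^N}$ and \Cref{thm:bellman}(c) to apply. Both follow from the fixed-point characterization in \Cref{thm:robustbellman}(a) on $\Sigma_b(\mathbb{X})$. Beyond this, the argument is routine.
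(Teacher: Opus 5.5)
Your proposal is correct and follows the paper's proof essentially step for step. The paper also obtains $\Phi_{\hat\pi^N}\tilde J^{N,\epsilon} \leq \tilde J^{N,\epsilon}$ by evaluating the supremum in \eqref{eqn:rmdp_policy_bellman} at $\nu=\mu$, iterates using the monotonicity of $\Phi_{\hat\pi^N}$, and passes to the limit via \Cref{thm:bellman}(c); your explicit choice of a deterministic stationary $\hat\pi^N \in \Pi$ and your remark that $\tilde J^{N,\epsilon} \in \Sigma_b(\mathbb{X})$ only make precise what the paper uses implicitly.
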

\endgroup

\Cref{prop:prob_perf_pointwise} states that if a realization of $\hat\mu^N$ satisfies $d(\mu,\hat\mu^N) \leq \epsilon$, then the robust optimal value function is an upper bound on the out-of-sample value function under the corresponding realization. Ensuring $d(\mu,\hat\mu^N) \leq \epsilon$ holds with high probability then establishes the probabilistic performance guarantee result. To achieve this, we rely upon \Cref{assum:controlconc} presented below. Roughly speaking, the assumption states that it is possible to tune the radii of the data-driven ambiguity sets so that those ambiguity sets contain the true distribution corresponding to any desired confidence level. This assumption was also used in \cite{ramani2022robust,ramani2024family} to establish the probabilistic performance guarantee result for data-driven RMDPs over finite state- and action-spaces.

\begin{assum}\label{assum:controlconc} 
\normalfont
Fix a sample-size $N \geq 1$, a probability distribution $\nu \in {\cal M}(\mathbb{W})$, and $\gamma \in (0,1)$. Let $\hat \nu^N$ be the empirical distribution of $\nu$ constructed from $N$ iid samples. The distance function satisfies the following property: there exists an $0<\epsilon^N_\gamma<\sup_{\nu_1,\nu_2\in {\cal M}(\mathbb{W})}\ d(\nu_1,\nu_2)$ such that
\begingroup
\allowdisplaybreaks
\begin{equation}
\label{eqn:concineq}
\nu_\otimes^N\big[d(\nu,\hat \nu^N)\leq \epsilon^N_\gamma\big] \geq 1-\gamma.
\end{equation}
\endgroup
\end{assum}

The parameter $\epsilon^N_\gamma$ generally depends on the quantities associated with properties of $\mathbb{W}$ and $\nu$, but they are omitted from the notation for brevity. Also, additional assumptions on $\mathbb{W}$ and $\nu$ might be required for the distance function to satisfy \Cref{assum:controlconc}. In \Cref{sec:controldist}, we present distances studied in the literature that satisfy \Cref{assum:controlconc} for concrete choices of $\mathbb{W}$ and $\nu$. For those distances, the corresponding closed-form expressions of $\epsilon^N_\gamma$ are presented in that section. Those formulas satisfy the additional property that $\lim_{N \to \infty}\epsilon^N_\gamma = 0$. Hence, for those distances, setting the  ambiguity set radius to $\epsilon^N_\gamma$ ensures that the radii conditions of Theorems \ref{thm:robustconvergence} and \ref{thm:oosconvergence} hold.

The following theorem states that  if there exists a $\epsilon^N_\gamma$ for a distance that is consistent with \eqref{eqn:concineq}, then setting the ambiguity set radius to $\epsilon^N_\gamma$  ensures that the robust optimal value function serves as a high probability upper bound for the out-of-sample value function.

\begin{thm}\label{thm:controlinfiniteprob} 
\normalfont 
Fix a sample-size $N\geq 1$ and $\gamma\in (0,1)$. Suppose Assumptions \ref{assum:mdpassum}, \ref{assum:ambiguitycompact}, and  \ref{assum:controlconc} hold, and let 
$0<\epsilon^N_{\gamma} < \sup_{\nu_1,\nu_2 \in {\cal M}(\mathbb{W})}\ d(\nu_1,\nu_2)$ be as in \cref{assum:controlconc}. 
Then
\begingroup
\allowdisplaybreaks
\begin{equation}
\label{eqn:infiniteprob}
\mu_\otimes^N \big[J(\hat\pi^N,x) \leq \tilde J^{N,\epsilon^N_\gamma}(x) \ \forall x \in \mathbb{X} \big]\geq 1-\gamma.
\end{equation}
\endgroup
\end{thm}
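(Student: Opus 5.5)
The plan is to combine the deterministic comparison in \Cref{prop:prob_perf_pointwise} with the concentration inequality of \Cref{assum:controlconc}, applied with $\nu = \mu$. Fix $N \geq 1$ and $\gamma \in (0,1)$, and set the radius to $\epsilon = \epsilon^N_\gamma$. Consider the two events (subsets of $\mathbb{W}^N$ determined by the samples $w_1,\ldots,w_N$)
\begin{equation*}
E_1 \defeq \big\{ d(\mu,\hat\mu^N) \leq \epsilon^N_\gamma \big\}, \qquad E_2 \defeq \big\{ J(\hat\pi^N,x) \leq \tilde J^{N,\epsilon^N_\gamma}(x) \ \forall x \in \mathbb{X} \big\}.
\end{equation*}

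The first step is to show $E_1 \subseteq E_2$ realization by realization. By \Cref{thm:robustbellman}, Assumptions \ref{assum:mdpassum} and \ref{assum:ambiguitycompact} give, for every realization of $\hat\mu^N$, a robust optimal deterministic stationary policy $\hat\pi^N$ and the fixed point $\tilde J^{N,\epsilon^N_\gamma}$. On $E_1$ we have $\mu \in {\cal P}^N(\epsilon^N_\gamma)$, so \Cref{prop:prob_perf_pointwise} applies to that realization and yields $J(\hat\pi^N,x) \leq \tilde J^{N,\epsilon^N_\gamma}(x)$ for all $x$. This is pure inclusion and uses nothing probabilistic.

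The second step invokes \Cref{assum:controlconc} with $\nu = \mu$: $\mu_\otimes^N[E_1] \geq 1-\gamma$. Monotonicity then gives $\mu_\otimes^N[E_2] \geq \mu_\otimes^N[E_1] \geq 1-\gamma$, which is \eqref{eqn:infiniteprob}.

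The one delicate point is measurability. $E_2$ is an intersection over uncountably many $x$, and $\hat\pi^N$ is selected from the sample, so $E_2$ may fail to be $\mu_\otimes^N$-measurable. I would handle this by reading the probability in \eqref{eqn:infiniteprob} as an inner probability, or by noting that $E_2$ contains the event $E_1$ of probability at least $1-\gamma$, so the bound holds for the completion of $\mu_\otimes^N$. Measurability of $E_1$ itself is implicitly built into \Cref{assum:controlconc}. Alternatively, one can assume a measurable selection of $\hat\pi^N$, which is the paper's convention of treating these quantities as random objects; I would state this convention explicitly rather than prove it.
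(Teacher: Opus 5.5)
Your proposal is correct and matches the paper's proof: the paper likewise notes that $\{d(\mu,\hat\mu^N)\le\epsilon^N_\gamma\}$ is contained in the event in \eqref{eqn:infiniteprob} by \Cref{prop:prob_perf_pointwise}, and then applies \Cref{assum:controlconc} with $\nu=\mu$ to get probability at least $1-\gamma$. Your measurability remark (working with the completion of $\mu_\otimes^N$ or an inner probability) is extra care that the paper leaves implicit.
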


To illustrate the form of  \Cref{thm:controlinfiniteprob} when used with a specific distance, let $\mathbb{W} \subseteq \mathbb{R}^m$ for some arbitrary positive integer $m$, and suppose the true distribution satisfies the light-tailed condition $\int_{\mathbb{W}} \exp(\lambda \norm{z}^a) \mu(dz) < \infty$ for some $\lambda > 0, \ a > 1$. This condition is satisfied by  subgaussian distributions;  see  \Cref{sec:wasserstein}. Fix a sample-size $N \geq 1$ and a  confidence level $1-\gamma \in (0,1)$. Suppose that the ambiguity set ${\cal P}^N(\epsilon^N_\gamma)$ is constructed using the Wasserstein distance with $\epsilon^N_\gamma=\epsilon^N_\gamma(\text{Wasserstein})$, where $\epsilon^N_\gamma(\text{Wasserstein})$ is defined in \eqref{eqn:wassestein_radius} (which is compatible with \eqref{eqn:concineq}). Then, with probability at least $1-\gamma$, the out-of-sample value $J(\hat\pi^N,x)$ is at most the corresponding robust optimal value $\tilde J^{N,\epsilon^N_\gamma}(x)$ simultaneously for all $x \in \mathbb{X}$. A formula for $\epsilon^N_\gamma$ that is consistent with \eqref{eqn:concineq} is presented for the bounded Lipschitz metric and the Prokhorov metric in  subsections \ref{sec:beta} and \ref{sec:prokhorov}, respectively.

\subsection{Probabilistic convergence rate}\label{sec:conv_rate}
This section derives the rate at which the robust optimal value function and the out-of-sample value function converge to the true optimal value function in terms of sample-size and ambiguity set radius. In this section, we assume that the components of the MDP satisfy the following conditions.

\begin{assum}\label{assum:conv_rate_assum}
\normalfont
In addition to \Cref{assum:mdpassum}, the MDP model satisfies the following conditions.
\begin{enumerate}[(a)]
 \item $A(x) = \mathbb{A}$ for all $x \in \mathbb{X}$ and $\mathbb{A}$ is compact. Hence, $\mathbb{K} = \mathbb{X} \times \mathbb{A}$.
 \item There exists $L_c, L_F \in [0,\infty)$ such that for all $(x,a) \in \mathbb{K}$ and $(a,z) \in \mathbb{A} \times \mathbb{W}$, 
 \begin{enumerate}[(i)]
 \item $c(\cdot,a,z)$ and $c(x,a,\cdot)$ are $L_c$-Lipschitz continuous.
\item $F(\cdot,a,z)$ and $F(x,a,\cdot)$ are $L_F$-Lipschitz continuous.
\end{enumerate}
\item $\alpha L_F < 1$.
\end{enumerate}
\end{assum}

The next result establishes a  probabilistic convergence rate of the robust optimal and out-of-sample value functions in terms of the sample-size and ambiguity set radii.

\begin{thm}\label{thm:conv_rate}
\normalfont
Fix a sample-size $N \geq 1$ and $\epsilon \in [0,\infty)$. Suppose \Cref{assum:conv_rate_assum} holds. Let the distance function $d$ satisfy \Cref{assum:betametrize}, and the function $\psi$ introduced there is nondecreasing. If there exists $\eta: \mathbb{N} \times [0,\infty) \to [0,1]$ such that
\begingroup
\allowdisplaybreaks
\begin{equation}\label{eqn:concineq_conv}
\mu_\otimes^N\big[d(\mu,\hat \mu^{\bar N}) \leq \bar\epsilon\big] \geq 1 - \eta(\bar N,\bar\epsilon) \ \forall \bar N \in \mathbb{N}, \,  \bar\epsilon \in [0,\infty),
\end{equation}
\endgroup
then
\begingroup
\allowdisplaybreaks
\begin{align}
\label{eqn:oos_rate}
\mu_\otimes^N\Big[J(\hat\pi^N,x) - J^*(x) \leq \tilde J^{N,\epsilon}(x) - J^*(x) \leq 2\, \psi(\epsilon){\bf \Delta} \ \forall x \in \mathbb{X} \Big] \geq 1 - \eta(N,\epsilon),
\end{align}
\endgroup
where ${\bf \Delta}  \defeq \norm{c}_\infty/(1-\alpha)^2 + L_c/(1-\alpha) + \alpha L_c L_F/((1-\alpha)(1-\alpha L_F))$.
\end{thm}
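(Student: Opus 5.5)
The plan is to work on the event $E \defeq \{d(\mu,\hat\mu^N)\le \epsilon\}$. By \eqref{eqn:concineq_conv} with $\bar N=N$ and $\bar\epsilon=\epsilon$, this event has $\mu_\otimes^N$-probability at least $1-\eta(N,\epsilon)$. It therefore suffices to show that both inequalities in \eqref{eqn:oos_rate} hold deterministically on $E$.

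The first inequality, $J(\hat\pi^N,x)\le \tilde J^{N,\epsilon}(x)$, is exactly \Cref{prop:prob_perf_pointwise}, since on $E$ we have $\mu\in{\cal P}^N(\epsilon)$.

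For the second inequality, I will bound $\tilde J^{N,\epsilon}-J^*$ using the two Bellman fixed-point characterizations, \Cref{thm:robustbellman}(a) and \Cref{thm:bellman}(a). For any $\nu\in{\cal P}^N(\epsilon)$, on $E$ we have $d(\nu,\hat\mu^N)\le\epsilon$ and $d(\mu,\hat\mu^N)\le\epsilon$. Since $\psi$ is nondecreasing, the triangle inequality for $\beta$ gives
\[
\beta(\nu,\mu)\le \beta(\nu,\hat\mu^N)+\beta(\hat\mu^N,\mu)\le 2\psi(\epsilon).
\]
(Here $d$ need not be symmetric, so I will take the bound $\beta(\mu,\hat\mu^N)\le\psi(d(\mu,\hat\mu^N))$ as stated, and use the symmetry of $\beta$.)

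Next, for $v$ bounded and Lipschitz, the integrand $g_{x,a}(z)=c(x,a,z)+\alpha v(F(x,a,z))$ satisfies
\[
\norm{g_{x,a}}_\infty\le \norm{c}_\infty+\alpha\norm{v}_\infty,
\qquad
\norm{g_{x,a}}_L\le L_c+\alpha\norm{v}_L L_F .
\]
Hence
\[
\Bigl|\int g\,d\nu-\int g\,d\mu\Bigr|\le \norm{g}_{BL}\,\beta(\nu,\mu).
\]
Taking the sup over $\nu$ and the min over $a$, this gives
\[
\tilde\Phi^{N,\epsilon}v-\Phi v\le 2\psi(\epsilon)\bigl(\norm{c}_\infty+\alpha\norm{v}_\infty+L_c+\alpha L_F\norm{v}_L\bigr),
\]
using that the difference of mins is at most the sup over $a$ of the differences.

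Then I combine this with a contraction argument:
\[
\tilde J-J^*=\tilde\Phi\tilde J-\Phi J^*=(\tilde\Phi J^*-\Phi J^*)+(\tilde\Phi\tilde J-\tilde\Phi J^*)\le(\text{above with }v=J^*)+\alpha\norm{\tilde J-J^*}_\infty .
\]
A two-sided version of the same estimate (the lower side is analogous since $\mu$ itself may be compared to any $\nu$) yields
\[
\norm{\tilde J-J^*}_\infty\le \frac{2\psi(\epsilon)}{1-\alpha}\bigl(\norm{c}_\infty+\alpha\norm{J^*}_\infty+L_c+\alpha L_F\norm{J^*}_L\bigr).
\]
Since $\norm{J^*}_\infty\le\norm{c}_\infty/(1-\alpha)$, we get $\norm{c}_\infty+\alpha\norm{c}_\infty/(1-\alpha)=\norm{c}_\infty/(1-\alpha)$, which produces the first term of ${\bf\Delta}$. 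The remaining terms match ${\bf\Delta}$ provided $\norm{J^*}_L\le L_c/(1-\alpha L_F)$.

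The main obstacle is this Lipschitz bound on $J^*$ in $x$. I would prove it by value iteration $\Phi_t{\bf 0}\to J^*$ (uniformly, by the $\alpha^t$ bound). By induction, if $v$ is $\ell$-Lipschitz then $\Phi v$ is $(L_c+\alpha L_F\ell)$-Lipschitz. This uses $A(x)=\mathbb{A}$ from \Cref{assum:conv_rate_assum}(a), so that the min over the same action set preserves the Lipschitz constant, together with the $x$-Lipschitz properties of $c$ and $F$ for fixed $(a,z)$ after integrating against $\mu$. Iterating and using $\alpha L_F<1$ gives the limit constant $L_c/(1-\alpha L_F)$, and uniform limits preserve it. One also must ensure $J^*$ has $\norm{\cdot}_{BL}$ finite on $\mathbb{X}$, which follows from this and boundedness.

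A minor point is measurability of the event in \eqref{eqn:oos_rate}. I would sidestep it by noting the event contains $E$, interpreting the probability as an inner or outer measure as in \Cref{thm:controlinfiniteprob}.
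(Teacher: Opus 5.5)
Your proof is correct and gives exactly $2\psi(\epsilon){\bf \Delta}$, but it follows a different route from the paper.

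\textbf{The paper's route.} It proves a finite-horizon estimate (\Cref{lem:conv_rate_operators}) by induction on $t$, comparing $Q_{t,v}$ with $\tilde Q^{N,\epsilon}_{t,v}$. Each step needs sup-norm and Lipschitz bounds for the iterate $\Phi_{t-1}v$ (\Cref{lem:phi_t_lipschitz}), and the result is the unwieldy ${\bf Diff}_2(t,v)$. It then sandwiches $J^*$ and $\tilde J^{N,\epsilon}$ between $\Phi_t{\bf 0}$ and $\tilde\Phi^{N,\epsilon}_t{\bf 0}$, lets $t\to\infty$, and recovers ${\bf \Delta}$ after some algebra. The resulting bound ${\bf \Delta}\big(\psi(d(\mu,\hat\mu^N))+\psi(\epsilon)\big)$ holds for every realization, and only afterwards is it specialized to the event $d(\mu,\hat\mu^N)\le\epsilon$.

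\textbf{Your route.} You use a one-step fixed-point perturbation. You compare $\tilde\Phi^{N,\epsilon}$ and $\Phi$ at the single function $J^*$, and absorb the remainder through the $\alpha$-contraction of $\tilde\Phi^{N,\epsilon}$. Putting the perturbation at $J^*$, rather than at $\tilde J^{N,\epsilon}$, is what keeps this cheap. You only need the regularity of $J^*$, namely $\norm{J^*}_\infty\le\norm{c}_\infty/(1-\alpha)$ and $\norm{J^*}_L\le L_c/(1-\alpha L_F)$, which you correctly get from value iteration using $A(x)=\mathbb{A}$. You never need the Lipschitz continuity of the robust value function, which the paper has to import from the stochastic-games literature in the proof of \Cref{thm:ood_rate}.

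\textbf{Comparison.} Your argument is shorter and shows directly where each of the three terms of ${\bf \Delta}$ comes from. The paper's longer route produces the finite-horizon estimates ${\bf Diff}(t,v,N,\epsilon)$, which it reuses in \Cref{lem:ood_conv_rate_operators} for the out-of-distribution bound.

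\textbf{Two small remarks.} First, you don't need to restrict to $E$ for the value-function bound. In general $\beta(\nu,\mu)\le\psi(\epsilon)+\psi(d(\mu,\hat\mu^N))$, so your argument also yields the paper's realization-wise bound. Second, on $E$ you have $\Phi v\le\tilde\Phi^{N,\epsilon}v$ for every $v$, because $\mu\in{\cal P}^N(\epsilon)$. Hence $\tilde J^{N,\epsilon}\ge J^*$ there, and a one-sided version of your contraction estimate would already suffice.
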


\Cref{thm:conv_rate}, in addition to relying on \Cref{assum:conv_rate_assum}, also required the distance function to satisfy \Cref{assum:betametrize} and further assumed that the $\psi$ function introduced in \Cref{assum:betametrize} is nondecreasing. We note that the nondecreasing property of $\psi$ is mild, and, like \Cref{assum:betametrize}, it is satisfied by many well-known distance functions, including all the distances listed in \Cref{sec:controldist}. 

We provide an interpretation of \eqref{eqn:oos_rate}.  Since the $\psi$ function in \Cref{thm:conv_rate} is nondecreasing, for any $x \in \mathbb{X}$, the error bound $J(\hat\pi^N,x) - J^*(x)$ in  \eqref{eqn:oos_rate} can be made arbitrarily small by picking sufficiently small values for the ambiguity set radius, $\epsilon$. 
Next, from \eqref{eqn:concineq_conv}, we note that any meaningful estimate of $\eta(N,\epsilon)$ will be nonincreasing in $\epsilon$ since $\{d(\mu,\hat\mu^N) \leq \epsilon_1\} \subseteq \{d(\mu,\hat\mu^N) \leq \epsilon_2\}$ for all $0 \leq \epsilon_1 < \epsilon_2 < \infty$. This implies that the confidence level, $1-\eta(N,\epsilon)$, claimed in \eqref{eqn:oos_rate} is nondecreasing in $\epsilon$. Therefore, for any fixed sample-size, the probabilistic convergence rate result in  \eqref{eqn:oos_rate} highlights the tradeoff between the error bound and its associated confidence level as a function of ambiguity set radius. A natural extension of \Cref{thm:conv_rate} is to check,  for any specified $\delta > 0$ and $\gamma \in (0,1)$, if it is possible to tune the sample-size and the ambiguity set radius  to make the error bound smaller than $\delta$ with a probability of at least $1-\gamma$? This is answered in the affirmative in \Cref{sec:sample_complexity}, where we appropriately reverse-engineer the convergence rate result presented in \Cref{thm:conv_rate} to obtain concrete values for the sample-size and the ambiguity set radius.   

\subsection{Sample complexity}
\label{sec:sample_complexity}
Given $\delta > 0$ and $\gamma \in (0,1)$, the sample complexity is the minimum number of samples (say, $N^*$) of the disturbance variable needed to ensure that  $J(\hat\pi^N,x)-J^*(x) \leq \delta$ holds simultaneously for all $x \in \mathbb{X}$ with probability at least $1-\gamma$ for sample-sizes $N \geq N^*$. In this section, we derive a lower bound on $N^*$ utilizing the convergence rate established in \Cref{thm:conv_rate}. For ease of illustration, we derive the sample complexity using the Wasserstein distance, though the technique is applicable to any distance function that satisfies condition \eqref{eqn:concineq_conv} presented in \Cref{thm:conv_rate}.

Let $\mathbb{W} \subseteq  \mathbb{R}^m$ and $\mu$ satisfy $\int_\mathbb{W} \exp(\lambda\norm{z}^a) \mu(dz) < \infty$ for some $a > 1, \lambda > 0$. For concreteness, let $m \geq 3$ and $a \geq m$. From \eqref{eqn:wasserstein_concentration}, $\mu_\otimes^N[d_{\text{Wasserstein}}(\mu,\hat\mu^N) \leq \epsilon] \geq 1 - c_1 \exp(-c_2N\epsilon^m)1_{\{\epsilon \leq 1\}} - c_1\exp(-c_2 N\epsilon^a)1_{\{\epsilon > 1\}} \geq 1 - 2c_1 \exp(-c_2N\epsilon^m)$, where the last inequality follows since $a \geq m$. Here, $c_1,c_2$ are positive constants that depend only on $m, a, \int_\mathbb{W} \exp(\lambda \norm{z}^a)\mu(dz)$. Hence, \eqref{eqn:concineq_conv}   is satisfied with $\eta(N,\epsilon) = 2c_1 \exp(-c_2N\epsilon^m)$. From  \eqref{eqn:oos_rate}, $\mu_\otimes^N[J(\hat\pi^N,x) - J^*(x) \leq \tilde J^{N,\epsilon}(x) - J^*(x) \leq 2\psi(\epsilon){\bf \Delta} \ \forall x \in \mathbb{X} \ \forall x \in \mathbb{X}] \geq 1-2c_1\exp(-c_2N\epsilon^m)$. Using the above result, we compute $\epsilon_{\text{ub}}$ such that $\mu_\otimes^N[J(\hat\pi^N,x)-J^*(x) \leq \tilde J^{N,\epsilon}(x) - J^*(x) \leq \delta \ \forall x \in \mathbb{X}] \geq 1-2c_1\exp(-c_2N\epsilon^m) \, \forall \epsilon \leq \epsilon_{\text{ub}}$. Next, we compute $\epsilon_{\text{lb}}$ such that $\mu_\otimes^N[J(\hat\pi^N,x) - J^*(x) \leq \tilde J^{N,\epsilon}(x) - J^*(x) \leq 2\psi(\epsilon){\bf \Delta} \ \forall x \in \mathbb{X}] \geq 1-\gamma \, \forall \epsilon \geq \epsilon_{\text{lb}}$. Finally, we equate $\epsilon_{\text{lb}}$ to $\epsilon_{\text{ub}}$ and determine the sample complexity.

Note that $J(\hat\pi^N,x) - J^*(x) \leq \tilde J^{N,\epsilon}(x) - J^*(x) \leq \delta$ holds simultaneously for all $x \in \mathbb{X}$ if $2 \psi(\epsilon){\bf \Delta}\leq \delta$. For the Wasserstein distance, we prove in \Cref{sec:wasserstein} that $\psi(\epsilon) = \epsilon$ for all $\epsilon \in \mathbb{R}_+$. Hence, substituting $\psi(\epsilon) = \epsilon$ in the expression $2 \psi(\epsilon){\bf \Delta} \leq \delta$ and solving for $\epsilon$ gives an upper bound on $\epsilon$ as
\begingroup
\allowdisplaybreaks
\begin{equation}
\label{eqn:radius_upper_bound}
 \epsilon_{\text{ub}} \defeq \frac{\delta}{2{\bf \Delta}} = \frac{1}{2}\left(\frac{\delta(1-\alpha)^2(1-\alpha L_F)}{\norm{c}_\infty(1-\alpha L_F) + L_c(1-\alpha)}\right),
\end{equation}
\endgroup
where the last equality follows by plugging the definition of ${\bf \Delta}$ from \Cref{thm:conv_rate} followed by algebraic calculation. 
Next,  $\mu_\otimes^N[J(\hat\pi^N,x) - J^*(x) \leq \tilde J^{N,\epsilon}(x) - J^*(x) \leq 2\psi(\epsilon){\bf \Delta} \ \forall x \in \mathbb{X}] \geq 1-\gamma$ if $1-\eta(N,\epsilon) \geq 1-\gamma$. This leads  to the condition $1-2c_1\exp(-c_2N\epsilon^m) \geq 1- \gamma$ after substituting for the value of $\eta(N,\epsilon)$. Solving for $\epsilon$ in  $1-2c_1\exp(-c_2N\epsilon^m) \geq 1- \gamma$ provides a lower bound on $\epsilon$ as
\begingroup
\allowdisplaybreaks
\begin{equation}
 \label{eqn:radius_lower_bound}
 \epsilon_{\text{lb}} \defeq \left(\frac{\log(2c_1/\gamma)}{Nc_2}\right)^{1/m}.
\end{equation}
\endgroup
Hence, $\mu_\otimes^N[J(\hat\pi^N,x) - J^*(x) \leq \tilde J^{N,\epsilon}(x) - J^*(x) \leq \delta \ \forall x \in \mathbb{X}] \geq 1-\gamma$ if $\epsilon_{\text{lb}} \leq \epsilon \leq \epsilon_{\text{ub}}$, i.e.,
\begingroup
\allowdisplaybreaks
\begin{equation}
 \label{eqn:radius_lower_upper_bound}
 \left(\frac{\log(2c_1/\gamma)}{Nc_2}\right)^{1/m} \leq \epsilon \leq \frac{1}{2}\left(\frac{\delta(1-\alpha)^2(1-\alpha L_F)}{\norm{c}_\infty(1-\alpha L_F) + L_c(1-\alpha)}\right).
\end{equation}
\endgroup
Setting the value of the leftmost expression in \eqref{eqn:radius_lower_upper_bound} to be at most the value of the  corresponding rightmost expression and then solving for $N$ gives us
\begingroup
\allowdisplaybreaks
\begin{equation}
 \label{eqn:sample_complexity}
 N \geq \frac{2^m \log(2c_1/\gamma)\Big((\norm{c}_\infty)(1-\alpha L_F) + L_c(1-\alpha)\Big)^m}{c_2\delta^m(1-\alpha)^{2m}(1-\alpha L_F)^{m}}.
\end{equation}
\endgroup

The above derivation of sample complexity has the following implications. Fix $\delta > 0$ and $\gamma \in (0,1)$. Let the number of samples, $N$, collected by the decision-maker satisfy \eqref{eqn:sample_complexity}. Suppose the decision-maker uses this value of $N$ to calculate the leftmost expression in \eqref{eqn:radius_lower_upper_bound}, and then picks the ambiguity set radii in accordance with \eqref{eqn:radius_lower_upper_bound}. Then $\mu_\otimes^N[J(\hat\pi^N,x) - J^*(x) \leq \tilde J^{N,\epsilon}(x) - J^*(x) \leq \delta \ \forall x \in \mathbb{X}] \geq 1 - \gamma$, i.e., with probability at least $1-\gamma$, the robust optimal policy $\hat\pi^N$ is $\delta$-optimal simultaneously for all $x \in \mathbb{X}$. Since $J(\hat\pi^N,x) - J^*(x) \leq \tilde J^{N,\epsilon}(x) - J^*(x) \leq \delta$ implies $J(\hat\pi^N,x) - J^*(x) \leq \delta$ and $J(\hat\pi^N,x) \leq \tilde J^{N,\epsilon}(x)$, we have $\mu_\otimes^N[J(\hat\pi^N,x)-J^*(x) \leq \delta, \ J(\hat\pi^N,x) \leq \tilde J^{N,\epsilon}(x) \ \forall x \in \mathbb{X}] \geq 1-\gamma$. That is,  for any specified confidence level, the suboptimality gap of the robust optimal policy can be made arbitrarily small while simultaneously guaranteeing that  the robust optimal value function is an upper bound on the out-of-sample value function.
\begin{rem}
\normalfont
 We note that the compactness of $\mathbb{A}$ in \Cref{assum:conv_rate_assum}(a) only serves as a sufficient condition for existence of Bellman (resp. robust Bellman) equation and related optimality characterizations. More specifically, the compactness of $\mathbb{A}$ is never used to establish the convergence rate result in \Cref{thm:conv_rate} or derive the sample complexity bound.  
\end{rem}

\subsection{Out-of-distribution performance bounds}\label{sec:out_of_dist_bounds}
In this section, we analyze the suboptimality gap incurred due to deploying a robust optimal policy in a MDP whose disturbance distribution is not the same as the distribution from which iid samples are drawn. The formal setup is as follows. Consider ${\cal M}^{\text{true}} \defeq \langle\mathbb{X},\mathbb{A},\mathbb{W},F, \mu^{\text{true}},c,\alpha\rangle$ with unknown $\mu^{\text{true}}$. Further, sampling from $\mu^{\text{true}}$ is challenging. Hence, we use a proxy MDP whose model components are same as ${\cal M}^{\text{true}}$, except for the distribution of the disturbance variable, denoted, $\mu$ ($\neq \mu^{\text{true}}$). Like $\mu^{\text{true}}, \mu$ is also unknown, but samples from $\mu$ can be obtained. Hence, we obtain the empirical estimate $\hat \mu^N$ of $\mu$ using $N$ iid samples drawn from  $\mu$, construct the ambiguity set ${\cal P}^N(\epsilon)$, and then solve the resulting data-driven RMDP to obtain the robust optimal value function $\tilde J^{N,\epsilon}$ and  a robust optimal policy $\hat \pi^N$. The goal is to analyze the suboptimality gap incurred due to  deploying $\hat \pi^N$ in ${\cal M}^{\text{true}}$. \Cref{thm:ood_rate} establishes a bound on this suboptimality gap, referred the out-of-distribution bound.  For ${\cal M}^{\text{true}}$, we superscript its corresponding entities using the label ``true'', i.e., $\Phi_{t}^{\text{true}}, J^{*,\text{true}}, J^{\text{true}}(\hat\pi^N,\cdot)$, etc.

\begin{thm}\label{thm:ood_rate}
\normalfont
Fix $N \geq 1, \epsilon \in [0,\infty)$. Let the components of the true and proxy MDP satisfy \Cref{assum:conv_rate_assum}. Let $d$ satisfy \Cref{assum:betametrize}, and the function $\psi$ stated there is nondecreasing. Also, let  $d$ satisfy  \eqref{eqn:concineq_conv}, i.e., $\exists \ \eta(N,\epsilon) \in [0,1]$ such that $\mu_\otimes^N\big[d(\mu,\hat \mu^N) \leq \epsilon\big] \geq 1 - \eta(N,\epsilon)$. Let ${\bf \Delta}$ be as defined in \Cref{thm:conv_rate}. Then
\[
\mu_\otimes^N\Big[J^{\text{true}}(\hat\pi^N,x) - J^{*,\text{true}}(x) \leq \big(2\, \psi(\epsilon)+ \beta(\mu^{\text{true}},\mu)\big)\frac{{\bf \Delta}}{1-\alpha} \ \forall x \in \mathbb{X} \Big] \geq 1 - \eta(N,\epsilon).
\]
\end{thm}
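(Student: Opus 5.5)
We decompose the out-of-distribution suboptimality gap by passing through the proxy MDP, whose disturbance distribution is $\mu$:
\[
J^{\text{true}}(\hat\pi^N,x) - J^{*,\text{true}}(x) = \big[J^{\text{true}}(\hat\pi^N,x) - J(\hat\pi^N,x)\big] + \big[J(\hat\pi^N,x) - J^*(x)\big] + \big[J^*(x) - J^{*,\text{true}}(x)\big].
\]

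\textbf{Middle term.} This is handled by \Cref{thm:conv_rate} applied to the proxy MDP. The proxy satisfies \Cref{assum:conv_rate_assum}, $d$ satisfies \Cref{assum:betametrize} with $\psi$ nondecreasing, and $d$ satisfies \eqref{eqn:concineq_conv}. Hence on an event of $\mu_\otimes^N$-probability at least $1-\eta(N,\epsilon)$ we have
\[
J(\hat\pi^N,x) - J^*(x) \leq 2\psi(\epsilon){\bf \Delta} \quad \forall x \in \mathbb{X}.
\]
Since $1/(1-\alpha) \geq 1$, this is at most $2\psi(\epsilon){\bf \Delta}/(1-\alpha)$. The two outer terms are deterministic, so the probability statement comes entirely from this step.

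\textbf{Outer terms.} Each outer term compares the same object (a fixed stationary policy, or the optimal value) under two disturbance distributions. The aim is to show that for every $\pi \in \Pi$,
\[
\sup_x \abs{J^{\text{true}}(\pi,x) - J(\pi,x)} \leq \beta(\mu^{\text{true}},\mu)\,\frac{{\bf \Delta}}{2(1-\alpha)},
\]
and likewise for $\abs{J^{*,\text{true}} - J^*}$. The plan has three steps.

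\begin{enumerate}[(i)]
\item Use the fixed-point identity with $J(\pi,\cdot)$ as the fixed point of $\Phi_\pi$ and $J^{\text{true}}(\pi,\cdot)$ as the fixed point of $\Phi^{\text{true}}_\pi$. This gives
\[
J^{\text{true}}(\pi,\cdot) - J(\pi,\cdot) = \Phi^{\text{true}}_\pi J^{\text{true}}(\pi,\cdot) - \Phi^{\text{true}}_\pi J(\pi,\cdot) + \big(\Phi^{\text{true}}_\pi - \Phi_\pi\big) J(\pi,\cdot).
\]
Because $\Phi^{\text{true}}_\pi$ is an $\alpha$-contraction in sup-norm, $\norm{J^{\text{true}}(\pi,\cdot) - J(\pi,\cdot)}_\infty \leq \frac{1}{1-\alpha}\sup_x \abs{\big((\Phi^{\text{true}}_\pi - \Phi_\pi)J(\pi,\cdot)\big)(x)}$. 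The same argument works for the optimal values, using $\Phi^{\text{true}}$ and $\Phi$ and the fact that $\abs{\min f - \min g} \leq \sup\abs{f-g}$.

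\item For fixed $(x,a)$, bound the one-step difference by $\norm{g}_{BL}\,\beta(\mu^{\text{true}},\mu)$, where $g(z) = c(x,a,z) + \alpha v(F(x,a,z))$ and $v = J(\pi,\cdot)$ or $J^*$. This holds because $\abs{\int g\,d\mu^{\text{true}} - \int g\,d\mu} \leq \norm{g}_{BL}\beta(\mu^{\text{true}},\mu)$. Then $\norm{g}_\infty \leq \norm{c}_\infty/(1-\alpha)$ and $\norm{g}_L \leq L_c + \alpha L_F\norm{v}_L$.

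\item Bound $\norm{v}_L$. This is where I expect the main obstacle to lie, and it is the same ingredient that underlies \Cref{thm:conv_rate}. The plan is to propagate Lipschitz constants through value iteration: $\Phi_{t,\pi}{\bf 0}$ has Lipschitz constant at most $L_c\sum_{k<t}(\alpha L_F)^k$, using Lipschitzness of $c(\cdot,a,z)$ and $F(\cdot,a,z)$ in $x$ (for the $\min$ over $\mathbb{A}$, with $A(x) = \mathbb{A}$ constant, the pointwise minimum preserves the Lipschitz constant). Letting $t \to \infty$ via \Cref{thm:bellman}(c) gives $\norm{v}_L \leq L_c/(1-\alpha L_F)$ using $\alpha L_F < 1$.
\end{enumerate}

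Combining these, $\norm{g}_{BL} \leq \norm{c}_\infty/(1-\alpha) + L_c + \alpha L_cL_F/(1-\alpha L_F) = (1-\alpha){\bf \Delta}$. Each outer term is therefore at most $\beta(\mu^{\text{true}},\mu)\,{\bf \Delta}$, and the two together contribute at most $2\beta(\mu^{\text{true}},\mu){\bf \Delta}$.

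\textbf{Reconciling with the stated constant.} The naive bound above gives $2\beta(\mu^{\text{true}},\mu){\bf \Delta}$, whereas the statement has $\beta(\mu^{\text{true}},\mu){\bf \Delta}/(1-\alpha)$. To match it I would either sharpen the estimate in steps (i)--(ii), or avoid splitting into two symmetric outer terms. The alternative is to apply a single comparison to $J^{\text{true}}(\hat\pi^N,\cdot) - J^{*,\text{true}}$ directly, adding and subtracting $\Phi_{\hat\pi^N}$ and $\Phi$ inside one contraction estimate, so that only one factor of $\beta$ appears, multiplied by ${\bf \Delta}$ and one extra factor $1/(1-\alpha)$ from the contraction. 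The $2\psi(\epsilon)$ term is absorbed using $1 \leq 1/(1-\alpha)$. Tracking these constants carefully is the delicate bookkeeping step.
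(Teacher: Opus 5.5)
Your middle term (via \Cref{thm:conv_rate}) and your bound $\norm{J^*-J^{*,\text{true}}}_\infty\le\beta(\mu^{\text{true}},\mu){\bf \Delta}$ are fine. Step (iii), however, fails for the first outer term.

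Propagating Lipschitz constants through value iteration works for the $\min$ (or $\min\sup$) operator, because $c(\cdot,a,z)$ and $F(\cdot,a,z)$ are Lipschitz for each \emph{fixed} $a$. It does not work for $\Phi_{t,\pi}$. There $x\mapsto c(x,\pi(x),z)$ and $x\mapsto F(x,\pi(x),z)$ involve $\pi(x)$, and nothing is assumed about the regularity of $\pi$ or about dependence on $a$. Your claim ``for every $\pi\in\Pi$'' is false. Take $\mathbb{X}=\mathbb{W}=[0,1]$, $F(x,a,z)=z$, $c(x,a,z)=a$, $\pi=1_{(1/2,1]}$, $\mu=\delta_{1/2}$, $\mu^{\text{true}}=\delta_{1/2+r}$. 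The value gap is $\alpha/(1-\alpha)$, while $\beta(\mu^{\text{true}},\mu)\to0$. Nothing makes $J(\hat\pi^N,\cdot)$ Lipschitz either, since $\hat\pi^N$ switches where the robust $Q$-values tie, not where the $\mu$-$Q$-values tie.

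The paper never needs this. In \eqref{eqn:ood_rate_temp1}, the only functions integrated against two different measures are value-iteration iterates $\Phi_{t-1}\tilde J^{N,\epsilon}$, which are Lipschitz by \Cref{lem:phi_t_lipschitz}. The policy $\hat\pi^N$ enters only through the action at the current state, via \eqref{eqn:rmdp_policy_bellman}, and through monotonicity of $\Phi^{\text{true}}_{\hat\pi^N}$ under the single measure $\mu^{\text{true}}$.

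You can repair your route the same way by decomposing through $\tilde J^{N,\epsilon}$ instead of $J(\hat\pi^N,\cdot)$; $\tilde J^{N,\epsilon}$ is $L_c/(1-\alpha L_F)$-Lipschitz and bounded by $\norm{c}_\infty/(1-\alpha)$.
\begin{itemize}
\item On $\{d(\mu,\hat\mu^N)\le\epsilon\}$ you have $\mu\in{\cal P}^N(\epsilon)$. With $g_x(z)=c(x,\hat\pi^N(x),z)+\alpha\tilde J^{N,\epsilon}(F(x,\hat\pi^N(x),z))$, \eqref{eqn:rmdp_policy_bellman} gives $(\Phi^{\text{true}}_{\hat\pi^N}\tilde J^{N,\epsilon})(x)-\tilde J^{N,\epsilon}(x)\le\int g_x\,d\mu^{\text{true}}-\int g_x\,d\mu\le(1-\alpha){\bf \Delta}\,\beta(\mu^{\text{true}},\mu)$.
\item The one-sided contraction argument then yields $J^{\text{true}}(\hat\pi^N,\cdot)-\tilde J^{N,\epsilon}\le\beta(\mu^{\text{true}},\mu){\bf \Delta}$.
\item Add $\tilde J^{N,\epsilon}-J^*\le 2\psi(\epsilon){\bf \Delta}$ (from the proof of \Cref{thm:conv_rate}) and your bound on $J^*-J^{*,\text{true}}$.
\end{itemize}
This gives $2\big(\psi(\epsilon)+\beta(\mu^{\text{true}},\mu)\big){\bf \Delta}$ with probability at least $1-\eta(N,\epsilon)$.

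Your ``reconciling'' step is not an argument, and no bookkeeping can make it one, because the stated inequality fails for $\alpha<1/2$. Consider the following instance:
\begin{itemize}
\item one state $x_0$, $\mathbb{A}=\{a_1,a_2\}$, $\mathbb{W}=[0,r]$, $F\equiv x_0$ (so $L_F=0$);
\item $c(x_0,a_1,z)=r+z$ and $c(x_0,a_2,z)=r+\kappa-z$ with $0<\kappa<r$ (so $L_c=1$, $\norm{c}_\infty=2r$);
\item $\mu=\delta_0$, $\mu^{\text{true}}=\delta_r$, $d=\beta$ with $\psi(t)=t$;
\item $\epsilon=0$ and $\eta(N,0)=0$, since $\hat\mu^N=\delta_0$ almost surely.
\end{itemize}
Then ${\cal P}^N(0)=\{\delta_0\}$ and $\hat\pi^N=a_1$, so the gap is $(2r-\kappa)/(1-\alpha)$. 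Meanwhile $\beta(\delta_r,\delta_0)=2r/(r+2)$, so the claimed bound is $\tfrac{2r}{(r+2)(1-\alpha)}\big(\tfrac{2r}{(1-\alpha)^2}+\tfrac{1}{1-\alpha}\big)\approx r/(1-\alpha)^2$. For small $r,\kappa$ this is below the gap whenever $\alpha<1/2$. With $\alpha=0.1$, $r=0.01$, $\kappa=0.001$, the gap is about $0.0211$ and the bound about $0.0126$. Your repaired $2\beta{\bf \Delta}\approx0.0226$ does hold, and its factor $2$ is asymptotically sharp as $r\to0$, $\kappa/r\to0$.

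The paper's proof has the matching slip. Since ${\bf Diff}_2(i,\tilde J^{N,\epsilon})\to{\bf \Delta}$, the right-hand side of \eqref{eqn:ood_rate_temp1} tends to $2\big({\bf Diff}_1(N,\epsilon)+\beta(\mu^{\text{true}},\mu)\big){\bf \Delta}/(1-\alpha)$, and the factor $2$ disappears in the final line. What that argument actually establishes is $2\big(2\psi(\epsilon)+\beta(\mu^{\text{true}},\mu)\big){\bf \Delta}/(1-\alpha)$.

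Your repaired bound is sharper than that for every $\alpha$. It also implies the printed inequality whenever $\alpha\ge1/2$, since then $(1-\alpha)(2\psi(\epsilon)+2\beta)\le2\psi(\epsilon)+\beta$. So prove and state the corrected bound rather than chasing the printed constant.
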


We present an interpretation of the claim in \Cref{thm:ood_rate}. The upper bound on $J^{\text{true}}(\hat\pi^N,x) - J^{*,\text{true}}(x)$ is the sum of two components,  
 $2\, \psi(\epsilon){\bf \Delta}/(1-\alpha)$ and $\beta(\mu^{\text{true}},\mu){\bf \Delta}/(1-\alpha)$. The term  $2\, \psi(\epsilon){\bf \Delta}/(1-\alpha)$ depends on the ambiguity set radius and can be made small enough to any desired level by varying the radius. For instance, by allowing $\epsilon \to 0$ with increasing $N$, this term vanishes as $N \to \infty$. For this reason, we call this term the statistical error.  The second component $\beta(\mu^{\text{true}},\mu){\bf \Delta}/(1-\alpha)$, referred the nonstatistical error, captures the discrepancy between the true and proxy MDP. It is  independent of $\epsilon$ and $N$, and it equals $0$ if and only if $\mu = \mu^{\text{true}}$.
 
 In general, analyzing the  out-of-distribution bounds is tightly coupled with the decision-making setup  under consideration. A recent paper \cite{wagenmaker2024overcoming} establishes out-of-distribution bounds for finite-horizon MDPs on finite spaces. Motivated by robotics applications, their decision-making scenario allows finetuning the policy obtained using proxy MDP based on an initial exploration on the true MDP. This extra exploration along with  assumptions on transition probabilities enabled \cite{wagenmaker2024overcoming} to design a tailor-made algorithm that reduces the  error to a value less than the distance between $\mu$ and $\mu^{\text{true}}$. Additionaly, \cite{wagenmaker2024overcoming} demonstrated that reducing the error to less than the distance between $\mu$ and $\mu^{\text{true}}$ is generally impossible without additional assumptions. A detailed study of the out-of-distribution bound of the robust optimal policy for various decision-making setups is outside the scope of this paper and deferred to future work.

\section{Empirical MDPs versus data-driven RMDPs}\label{sec:empirical_mdps}
In empirical MDPs, the unknown distribution of the random disturbance is replaced with its empirical distribution constructed from $N \in \mathbb{N}$ iid samples, and the resulting MDP is solved. We use $\tilde J^{N,*}_{\text{Emp}}$ to denote the optimal value function of the empirical MDP, called the empirical optimal value function. The corresponding optimal policy will be denoted $\hat\pi^N_{\text{Emp}}$, referred the empirical optimal policy.

\begin{thm}\label{thm:empirical_mdps_counterexample}
 \normalfont
 There exists a MDP instance with finite $\mathbb{X}, \mathbb{A}, \mathbb{W}$, and a state $x \in \mathbb{X}$ such that
 \begin{enumerate}[(a)]
 \item For all $\delta < 1$ and $\gamma  < 0.25$, at least $3$ samples are needed so that $J(\hat\pi^N_\text{Emp},x) - J^*(x) \leq \delta$ holds with probability at least $1 - \gamma$.
 \item $\mu_\otimes^N \left[J(\hat\pi^N_\text{Emp},x) \leq \tilde J^{N,*}_{\text{Emp}}(x) \right]
  \begin{cases}
   = 0.5 \text{ if the sample-size } N \text{ is odd. }\\
   = 0.75 \text{ if the sample-size } N = 2.\\
   \text{is decreasing over even sample-sizes and lies in }\\
   (0.5,0.75) \text{ for all even sample-sizes } N \neq 2.
   \end{cases}
    $
    \end{enumerate}
\end{thm}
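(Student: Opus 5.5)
The plan is to build an explicit one-decision example in which the empirical MDP reduces to comparing empirical means of Bernoulli samples. After that, both parts become binomial computations.

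\textbf{Construction.} Take $\mathbb{X} = \{x, x_0\}$, with $x_0$ absorbing and cost $0$, and $\mathbb{A} = \{a_1, a_2\}$. Take $\mathbb{W} = \{0,1\}$ and $\mu$ the Bernoulli$(1/2)$ distribution. From $x$ both actions move to $x_0$ deterministically. The costs are $c(x,a_1,w) = 2w$ and $c(x,a_2,w) = 4w$. All spaces are finite, so \Cref{assum:mdpassum} holds trivially and \Cref{thm:bellman} applies to the true MDP and to every empirical MDP. It gives
\[
J^*(x) = \min\{1,2\} = 1,\qquad J(a_1,x) = 1,\qquad J(a_2,x) = 2.
\]
So choosing $a_2$ has suboptimality gap exactly $1$.

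\textbf{Empirical problem.} Let $\hat p$ be the fraction of ones among the $N$ samples. The empirical action values are $2\hat p$ and $4\hat p$, so $\tilde J^{N,*}_{\text{Emp}}(x) = 2\hat p$. Action $a_1$ is strictly better whenever $\hat p > 0$. When $\hat p = 0$ both values equal $0$, and I fix the tie-breaking rule so that $\hat\pi^N_{\text{Emp}}(x) = a_2$; the statement concerns a chosen empirical optimal policy. I would remark that the tie can be avoided by an arbitrarily small perturbation of the costs, but the stated form is cleanest. Hence the empirical optimal policy is suboptimal exactly on the event $\{\hat p = 0\}$, which has probability $2^{-N}$.

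\textbf{Part (a).} For any $\delta<1$, the event $\{J(\hat\pi^N_{\text{Emp}},x) - J^*(x) \le \delta\}$ equals $\{\hat p > 0\}$. Its probability is $1/2$ for $N=1$ and $3/4$ for $N=2$. Both values are at most $0.75 < 1-\gamma$ when $\gamma < 0.25$, so at least $3$ samples are needed.

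\textbf{Part (b).} I would split on $\{\hat p = 0\}$.
\begin{itemize}
\item On $\{\hat p = 0\}$ we have $J(\hat\pi^N_{\text{Emp}},x) = 2 > 0 = \tilde J^{N,*}_{\text{Emp}}(x)$, so the inequality fails.
\item On $\{\hat p > 0\}$ we have $J(\hat\pi^N_{\text{Emp}},x) = 1$, so the inequality holds iff $2\hat p \ge 1$, i.e.\ $\hat p \ge 1/2$. This subevent already excludes $\hat p = 0$.
\end{itemize}
Therefore the probability in (b) equals $\mu_\otimes^N[\hat p \ge 1/2] = \mathbb{P}[\mathrm{Bin}(N,1/2) \ge N/2]$.

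By the symmetry of $\mathrm{Bin}(N,1/2)$, this is exactly $1/2$ when $N$ is odd. When $N$ is even it equals $\tfrac12 + \tfrac12\binom{N}{N/2}2^{-N}$, which is $3/4$ at $N=2$. The remaining claim is that $q_N := \binom{N}{N/2}2^{-N}$ is strictly decreasing over even $N$ and positive. For $N = 2k$ the ratio is
\[
\frac{q_{2k+2}}{q_{2k}} = \frac{(2k+1)(2k+2)}{4(k+1)^2} = \frac{2k+1}{2k+2} < 1 .
\]
So the probability decreases over even $N$ and stays in $(0.5,0.75)$ for even $N \neq 2$.

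The main point needing care is the tie-breaking at $\hat p = 0$. It is what makes the bad action coincide with an event on which the upper-bound inequality already fails, so that the probability in (b) is exactly $\mathbb{P}[\hat p \ge 1/2]$. I would state the tie-breaking choice explicitly as part of the instance.
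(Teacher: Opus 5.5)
\textbf{The gap is in part (a).} Your part (b) computation is correct and mirrors the paper's. The paper's instance yields the event $\{\hat p^N \le 1/2\}$ and yours the mirror event $\{\hat p \ge 1/2\}$. Your ratio $\frac{2k+1}{2k+2}$ also makes explicit the monotonicity the paper only asserts.

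In your instance the empirical MDP never strictly prefers the bad action: $2\hat p \le 4\hat p$ for every sample. So $a_1$ is always an empirically optimal action, and $a_2$ is selected only through the tie-breaking rule you impose at $\hat p = 0$. That rule is not part of the MDP instance $\langle \mathbb{X},\mathbb{A},\mathbb{W},F,\mu,c,\alpha\rangle$. Take instead the equally legitimate selection $\hat\pi^N_{\text{Emp}}(x) = a_1$. Then the empirical optimal policy is exactly optimal for every sample and every $N \ge 1$, so one sample suffices and (a) fails. As written, you have proved (a) only for a hand-picked, adversarial choice among empirically optimal policies, which makes the counterexample degenerate.

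Your closing remark also misplaces the role of the tie. Part (b) is unaffected by it, since at $\hat p = 0$ either action gives $J(\cdot,x) \ge 1 > 0 = \tilde J^{N,*}_{\text{Emp}}(x)$. It is (a) that depends entirely on the tie.

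\textbf{The fix.} The perturbation you allude to repairs this, but it must be chosen carefully. Since (a) quantifies over all $\delta < 1$, the true gap of the bad action must stay at least $1$. Simply adding $\eta$ to $c(x,a_1,\cdot)$ shrinks the gap to $1-\eta$ and breaks (a) for $\delta \in [1-\eta,1)$.

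A choice that works is $c(x,a_1,w) = 1 + 2w$ and $c(x,a_2,w) = 6w$.
\begin{itemize}
\item The true values are $J(a_1,x) = 2$ and $J(a_2,x) = 3$, so the gap is exactly $1$.
\item Action $a_2$ is strictly empirically optimal iff $6\hat p < 1 + 2\hat p$, i.e.\ $\hat p < 1/4$. For $N \in \{1,2\}$ it is therefore chosen exactly on $\{\hat p = 0\}$, with no ties, which gives (a).
\item For (b), the condition $2 \le 1 + 2\hat p$ (when $a_1$ is chosen) and the condition $3 \le 6\hat p$ (when $a_2$ is chosen) both reduce to $\hat p \ge 1/2$. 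This is incompatible with $a_2$ being even weakly empirically optimal, which needs $\hat p \le 1/4$. So the event is again $\{\hat p \ge 1/2\}$ regardless of ties, and your binomial argument goes through unchanged.
\end{itemize}
The paper's instance has the same robustness. There the bad action is strictly preferred on $\{\hat p^N > 2/3\}$. The only tie, at $\hat p^N = 2/3$, cannot occur for $N \le 2$, and it violates the inequality in (b) whichever policy is selected.
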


\Cref{thm:empirical_mdps_counterexample} has the following implication. For the MDP instance constructed in the proof of that theorem, part (a) states that at least $3$ samples are required to bound the suboptimality gap of the empirical optimal policy to less than $1$ with a probability greater than $0.75$. Part (b) of the theorem implies that for every finite sample-size and any confidence level greater than $0.75$, the empirical optimal value function fails to serve as a lower bound on the out-of-sample value of the empirical optimal policy. Also, part (b) states that using a sample-size of $2$ maximizes the probability that the out-of-sample value of the empirical optimal policy is at most the corresponding empirical optimal value function. Together, parts (a) and (b) of the theorem provide the following stronger implication. For any $\delta < 1, \gamma < 0.25$, there exists no empirical optimal policy (corresponding to any sample-size $N$) that bounds the suboptimality gap to less than $\delta$ with a confidence level of $1-\gamma$ and maximize the probability that its out-of-sample value is at most the empirical optimal value function. This conclusion is in contrast to our data-driven RMDPs, where for any given confidence level, the robust optimal policy (corresponding to all sample-sizes beyond a threshold) bounds the suboptimality gap to any desired accuracy and simultaneously ensures that its out-of-sample value function is upper bounded by the robust optimal value function (see the discussion in the last paragraph of \Cref{sec:sample_complexity}).

The idea of evaluating  empirical optimization by comparing their in-sample versus out-of-sample performance was used in the context of single-stage stochastic optimization problems in \cite{bennouna2023certified,van2020data}. The authors in \cite{bennouna2023certified} prove that for every fixed sample-size, the expectation (taken with respect to the uncertainty in the sample data) of the empirical optimal value is less than the expectation of the corresponding out-of-sample value.  This result can be seen as an analog of part (b) of \Cref{thm:empirical_mdps_counterexample} which compares the in-sample and the out-of-sample performance in probability rather than in expectation. In \cite{van2020data}, the authors demonstrate via a counterexample that for all confidence levels beyond a threshold, the empirical optimal value fails to serve as an upper bound on the corresponding  out-of-sample value as the sample-size tends to infinity.  \Cref{thm:empirical_mdps_counterexample} extends the in-sample versus out-of-sample performance comparisons made in \cite{bennouna2023certified,van2020data} along multiple directions for empirical MDPs. It establishes probabilistic comparisons of in-sample versus out-of-sample performance for all finite sample-sizes and presents the tradeoff between sample complexity and probabilistic performance guarantee on the out-of-sample value function.

\section{Distances that satisfy Assumptions \ref{assum:ambiguitycompact},  \ref{assum:distancemetrize}, and \ref{assum:controlconc}}
\label{sec:controldist}
We begin with a sufficient condition on the distance function that guarantees the $\sigma$-weak compactness of ambiguity sets stated in  \Cref{assum:ambiguitycompact}.

\begin{lem}\label{lem:ballcompactness}
\normalfont
Suppose $\mathbb{W}$ is compact. Suppose the distance function $d: {\cal M}(\mathbb{W}) \times {\cal M}(\mathbb{W})\to [0,\infty)$ satisfies $d(\nu,\hat\nu)$ is weakly lower semicontinuous in $\nu$ for every fixed $\hat\nu$, i.e., $d(\nu,\hat\nu) \leq \liminf_{k \to \infty}d(\nu^k,\hat\nu)$ for all $\nu^k \to \nu$ weakly and $\hat\nu \in {\cal M}(\mathbb{W})$. Then  \Cref{assum:ambiguitycompact} holds.
\end{lem}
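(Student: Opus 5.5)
The plan is to show that, for fixed $\hat\nu \in {\cal M}(\mathbb{W})$ and $\epsilon \in [0,\infty)$, the ball $B \defeq \{\nu \in {\cal M}(\mathbb{W}) : d(\nu,\hat\nu) \leq \epsilon\}$ is a closed subset of a compact space in the weak topology. That gives compactness, and then $\sigma$-weak compactness in the sense of \Cref{assum:ambiguitycompact}, which in the setting of \cite{gonzalez2003minimax} just means compactness in the topology of weak convergence.

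First, since $\mathbb{W}$ is a compact metric space, Prokhorov's theorem shows that ${\cal M}(\mathbb{W})$ is compact in the weak topology. Every family of probability measures on a compact space is trivially tight, because one can take $\mathbb{W}$ itself as the compact set for every tolerance. Moreover, $\mathbb{W}$ is separable, so the bounded Lipschitz metric $\beta$ from \eqref{eqn:betametric} metrizes weak convergence on ${\cal M}(\mathbb{W})$. Hence ${\cal M}(\mathbb{W})$ is a compact metric space, and compactness is equivalent to sequential compactness. Alternatively, one can identify ${\cal M}(\mathbb{W})$ with a weak$^*$-closed subset of the unit ball of $C(\mathbb{W})^*$ via Riesz representation and apply Banach--Alaoglu; either route works.

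Second, I show that $B$ is weakly closed. Take $\nu^k \in B$ with $\nu^k \to \nu$ weakly. By the assumed lower semicontinuity,
\[
d(\nu,\hat\nu) \leq \liminf_{k\to\infty} d(\nu^k,\hat\nu) \leq \epsilon ,
\]
so $\nu \in B$. Because the topology is metrizable, sequential closedness is the same as closedness.

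Third, a closed subset of a compact space is compact, so $B$ is weakly compact. Since $\hat\nu$ and $\epsilon$ were arbitrary, this gives \Cref{assum:ambiguitycompact}. Note that $B$ is nonempty whenever $d(\hat\nu,\hat\nu)=0$, but nonemptiness is not needed for compactness.

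The main obstacle is only interpretive: one has to make sure that ``$\sigma$-weakly compact'' in \Cref{assum:ambiguitycompact} refers to the weak topology ${\cal M}(\mathbb{W})$ carries here. This matches \cite{gonzalez2003minimax}, where compactness is imposed for the weak topology on measures. Once that is settled, the rest is the standard Prokhorov argument plus closedness, and the key technical point is metrizability by $\beta$, which lets sequential lower semicontinuity give closedness.
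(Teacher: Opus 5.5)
Your proof is correct and follows the paper's own argument. Both proofs combine weak compactness of ${\cal M}(\mathbb{W})$ for compact $\mathbb{W}$ (where you invoke Prokhorov, the paper cites \cite[Proposition 7.22]{bertsekas1978stochastic}) with weak closedness of the sublevel set, which comes from lower semicontinuity of $d(\cdot,\hat\nu)$; you additionally note that metrizability by $\beta$ lets sequential lower semicontinuity give genuine closedness, a step the paper leaves implicit.
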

\begin{proof}
The compactness of $\mathbb{W}$ ensures that ${\cal M}(\mathbb{W})$ is weakly compact \cite[Proposition 7.22]{bertsekas1978stochastic}.  \Cref{assum:ambiguitycompact} then holds since for any $\hat\nu \in {\cal  M}(\mathbb{W})$ and $\epsilon \in [0,\infty)$, the weak lower semicontinuity of $d(\cdot,\hat\nu)$ implies $\{\nu \in {\cal M}(\mathbb{W}): d(\nu,\hat\nu) \leq \epsilon\}$ is weakly closed.
\end{proof}

The weak lower semicontinuity of $d(\cdot, \hat\nu)$ is a mild condition and satisfied by many  well-known distances. For instance, $d(\nu,\hat\nu)$ is weakly lower semicontinuous in $(\nu,\hat\nu)$ for $f$-divergences \cite[Theorem 1.47]{liese1987convex} and the Wasserstein distance \cite[Remark 6.12]{villani2009optimal}. For the bounded Lipschitz metric and the Prokhorov metric, $d(\nu,\hat\nu)$ is weakly continuous in $(\nu,\hat\nu)$ since these metrics metrize the topology of weak convergence \cite[Theorem 11.3.3]{dudley2018real}. We note that the weak compactness of ambiguity sets also holds under conditions on $\mathbb{W}$ and $d$ that are different from those in \Cref{lem:ballcompactness}; for example, the Wasserstein ambiguity set defined over proper Polish spaces \cite[Theorem 1]{yue2021linear}.

In the rest of the section, we list well-known distances that satisfy \Cref{assum:betametrize} along with their corresponding $\psi$ function defined in that assumption. Recall from    \Cref{lem:betametrize} that \Cref{assum:betametrize} is a sufficient condition for \Cref{assum:distancemetrize}. For the distances in our list that satisfy \Cref{assum:betametrize}, we present a concrete instance of $\mathbb{W}$ and $\nu$ under which \Cref{assum:controlconc} holds and state the corresponding closed-form expression for the ambiguity set radius, $\epsilon^N_\gamma$. 

\subsection{Total Variation distance}\label{sec:tv}
The total variation (TV) distance between $\nu, \hat\nu \in {\cal M}(\mathbb{W})$ is given as  $d_{\text{TV}}(\nu,\hat\nu) \defeq \frac{1}{2}\sup_f \vert\int_\mathbb{W} f(z)\nu(dz) - \int_\mathbb{W} f(z) \hat\nu(dz)\vert$, where the supremum is over measurable functions $f:\mathbb{W} \to \mathbb{R}$ with $\norm{f}_\infty \leq 1$ \cite[page 424]{gibbs2002choosing}.  The definition of $d_{\text{TV}}$ implies $\beta(\nu,\hat\nu) \leq 2 d_{\text{TV}}(\nu,\hat\nu)$  for all $\nu, \hat\nu \in {\cal M}(\mathbb{W})$. Hence, \Cref{assum:betametrize} holds with $\psi(t) = 2t$ for $t \in \mathbb{R}_+$. When $\mathbb{W}$ is a finite set, $d_{\text{TV}}$ satisfies \Cref{assum:controlconc} and a closed-form expression for $\epsilon^N_\gamma$ exists \cite[Subsection 4.1]{ramani2022robust}.

\subsection{Hellinger distance}\label{sec:hellinger}
For $\nu,\hat\nu \in {\cal M}(\mathbb{W})$ that are absolutely continuous with respect to a dominating measure $\lambda$, the Hellinger distance between $\nu$ and $\hat\nu$ is defined as $d_\text{Hellinger}(\nu,\hat\nu) \defeq \int_\mathbb{W} \Big(\sqrt{\frac{d\nu}{d\lambda}(z)} - \sqrt{\frac{d\hat\nu}{d\lambda}(z)}\Big)^2 \lambda(dz)$. Here, $\frac{d\nu}{d\lambda}$ and $\frac{d\hat\nu}{d\lambda}$ are the Radon-Nikodyn derivatives of $\nu$ and $\hat\nu$ with respect to $\lambda$, respectively. Inequality (8) in \cite{gibbs2002choosing} establishes that $d_\text{TV}(\nu,\hat\nu) \leq \sqrt{d_\text{Hellinger}(\nu,\hat\nu)}$. Since $\beta(\nu,\hat\nu) \leq 2 d_{\text{TV}}(\nu,\hat\nu)$ from \Cref{sec:tv}, the relationship $\beta(\nu,\hat\nu) \leq 2 \sqrt{d_{\text{Hellinger}}(\nu,\hat\nu)}$ holds for all $\nu,\hat\nu \in {\cal M}(\mathbb{W})$. Therefore, $d_{\text{Hellinger}}$ satisfies \Cref{assum:betametrize} with $\psi(t) = 2 \sqrt{t}$ for $t \in \mathbb{R}_+$. For finite $\mathbb{W}$, $d_{\text{Hellinger}}$  satisfies \Cref{assum:controlconc} and a formula for $\epsilon^N_\gamma$ appears in \cite[Subsection 4.3]{ramani2022robust}.

\subsection{Kullback-Leibler divergence}\label{sec:kl}
The Kullback-Leibler (KL) divergence between $\nu, \hat\nu \in {\cal M}(\mathbb{W})$ is given as $d_{\text{KL}}(\nu,\hat\nu) \defeq \int_\mathbb{W} \log\left(\frac{d\nu}{d\hat\nu}(z)\right)\nu(dz)$ if $\nu$ is absolutely continuous with respect to $\hat\nu$, and $\infty$ otherwise \cite[page 422]{gibbs2002choosing}.  The Pinsker's inequality states that $d_{\text{TV}}(\nu,\hat\nu) \leq \sqrt{d_{\text{KL}}(\nu,\hat\nu)/2}$. Since $\beta(\nu,\hat\nu) \leq 2 d_{\text{TV}}(\nu,\hat\nu)$ from \Cref{sec:tv}, we have $\beta(\nu,\hat\nu) \leq \sqrt{2d_{\text{KL}}(\nu,\hat\nu)}$ for all $\nu, \hat\nu \in {\cal M}(\mathbb{W})$. Therefore, $d_{\text{KL}}$ satisfies \Cref{assum:betametrize} with $\psi(t) = \sqrt{2t}$ for $t \in \mathbb{R}_+$. An approximate version of \Cref{assum:controlconc} for the KL divergence is presented in \cite[Subsection 4.5]{ramani2022robust}.

\subsection{$\chi^2$ distance}\label{sec:chi_squared}
The $\chi^2$ distance between $\nu,\hat\nu \in {\cal M}(\mathbb{W})$ is defined as  $d_{\chi^2}(\nu,\hat\nu) \defeq \int_\mathbb{W} \left(\frac{d\nu}{d\hat\nu}(z) - 1\right)^2 \hat\nu(dz)$ if $\nu$ is absolutely continuous with respect to $\hat\nu$, and $\infty$ otherwise \cite[page 425]{gibbs2002choosing}. From \cite[page 429]{gibbs2002choosing}, we have $d_{\text{TV}}(\nu,\hat\nu)\leq  \sqrt{d_{\chi^2}(\nu,\hat\nu)}/2$. Since $\beta(\nu,\hat\nu) \leq 2 d_{\text{TV}}(\nu,\hat\nu)$ from \Cref{sec:tv}, $\beta(\nu,\hat\nu) \leq \sqrt{d_{\chi^2}(\nu,\hat\nu)}$ for all $\nu, \hat\nu \in {\cal M}(\mathbb{W})$. Therefore, ${d_{\chi^2}}$ satisfies \Cref{assum:betametrize} with $\psi(t) = \sqrt{t}$ for $t \in \mathbb{R}_+$. The $\chi^2$ distance satisfies an approximate version of \Cref{assum:controlconc} \cite[Subsection 4.6]{ramani2022robust}. 

\subsection{Wasserstein distance}\label{sec:wasserstein}
For any arbitrary $z_0 \in \mathbb{W}$, define ${\cal M'}(\mathbb{W}) \defeq  \{\nu \in {\cal M}(\mathbb{W}): \int_\mathbb{W} \rho_\mathbb{W}(z_0,z) \nu(dz) < \infty \}$, where $\rho_\mathbb{W}$ is a metric on $\mathbb{W}$ that metrizes its topology. The Wasserstein distance (or the 1-Wasserstein distance) between $\nu, \hat\nu \in {\cal M'}(\mathbb{W})$ is defined as $d_{\text{Wasserstein}}(\nu,\hat\nu) \defeq \inf_{\theta \in \Theta(\nu,\hat\nu)} \int_{\mathbb{W} \times \mathbb{W}}\rho_\mathbb{W}(z_1,z_2)\theta(dz_1,dz_2)$,  where $\Theta(\nu,\hat\nu)$ is the set of joint distributions with marginals $\nu$ and $\hat\nu$ \cite[Chapter 6]{villani2009optimal}. From the  Kantrovich-Rubinstein duality formula \cite[Remark 6.5]{villani2009optimal}, $d_{\text{Wasserstein}}(\nu,\hat\nu) = \sup_f\big|\int_\mathbb{W} f(z)\nu(dz) - \int_\mathbb{W} f(z)\hat\nu(dz)\big|$, where the supremum is over measurable functions $f:\mathbb{W} \to \mathbb{R}$ with $\norm{f}_L \leq 1$. Therefore, $\beta(\nu,\hat\nu) \leq d_{\text{Wasserstein}}(\nu,\hat\nu)$ for all $\nu, \hat\nu \in {\cal M'}(\mathbb{W})$. Hence, \Cref{assum:betametrize} is satisfied with $\psi(t)=t$ for $t \in \mathbb{R}_+$. The Wasserstein distance satisfies  \Cref{assum:controlconc} under appropriate conditions on $\mathbb{W}$ and the distribution $\nu$. Suppose $\mathbb{W} \subseteq \mathbb{R}^m$ and $\nu$ is such that $\int_{\mathbb{W}} \exp(\lambda \norm{z}^a) \nu(dz) < \infty$ for some $\lambda > 0, a > 1$. Then, from \cite[Theorem 2]{fournier2015rate}
\begingroup
\allowdisplaybreaks
\begin{align}
\label{eqn:wasserstein_concentration}
\nu_\otimes^N\left[d_{\text{Wasserstein}}(\nu,\hat\nu^N) \leq \epsilon\right] & \geq 1 - c_1 b(N,\epsilon)1_{\{\epsilon \leq 1\}} - c_1\exp(-c_2 N\epsilon^a)1_{\{\epsilon > 1\}},\\
\nonumber
\text{where}\\
\nonumber 
b(N,\epsilon) &\defeq
        \begin{cases}
            \exp\left(-c_2N\epsilon^2\right) \text{ if } m = 1\\
            \exp\left(-c_2N\frac{\epsilon^2}{\log^2(2+1/\epsilon)}\right) \text{ if } m =2\\
            \exp\left(-c_2N\epsilon^m\right) \text{ if } m \geq 3.
        \end{cases}
\end{align}
\endgroup
Here, $c_1,c_2$ are positive constants that depend only on $m,a,\int_{\mathbb{W}} \exp(\lambda \norm{z}^a) \nu(dz)$. By setting $c_1 b(N,\epsilon)1_{\{\epsilon \leq 1\}} + c_1\exp(-c_2 N\epsilon^a)1_{\{\epsilon > 1\}}$ to $\gamma \in (0,1)$, the ambiguity set radius for any sample-size $N \geq 1$ and confidence level $1-\gamma \in (0,1)$ is given by
\begingroup
\allowdisplaybreaks
\begin{align}
 \label{eqn:wassestein_radius}
 \epsilon^N_\gamma(\text{Wasserstein}) &\defeq \begin{cases}
   \left(\frac{\log(c_1/\gamma)}{Nc_2}\right)^{1/a} &\text{ if } N < \log(c_1/\gamma)/c_2\\
   \left(\frac{\log(c_1/\gamma)}{Nc_2}\right)^{1/2} &\text{ if } N \geq \log(c_1/\gamma)/c_2 \text{ and } m =1\\
   \quad \tilde\epsilon &\text{ if } N \geq \log(c_1/\gamma)/c_2 \text{ and } m =2\\ 
   \left(\frac{\log(c_1/\gamma)}{Nc_2}\right)^{1/m} &\text{ if } N \geq \log(c_1/\gamma)/c_2 \text{ and } m \geq 3,\\ 
  \end{cases}
\end{align}
\endgroup
where $\tilde\epsilon$ satisfies the equation $\tilde\epsilon/\log(2+1/\tilde\epsilon) = \sqrt{\log(c_1/\gamma)/Nc_2}$. Inequalities similar to \eqref{eqn:wasserstein_concentration} also exist for compact sets in $\mathbb{R}^m$ \cite[Proposition 3.2]{boskos2020data} and compact Polish spaces \cite{weed2019sharp}, among others. By employing a similar approach used to derive \eqref{eqn:wassestein_radius}, we can then obtain closed-form expressions for $\epsilon^N_\gamma(\text{Wasserstein})$ that satisfy \Cref{assum:controlconc} for those specific instances of $\mathbb{W}$ and $\nu$ under consideration. Though Assumptions \ref{assum:betametrize} and \ref{assum:controlconc} were verified for the 1-Wasserstein distance, they  also hold for the $p$-Wasserstein distance with $p \in [1,\infty)$.

Subgaussian distributions on $\mathbb{R}^m$ \cite[Chapter 3]{vershynin2025high} satisfy the light-tailed property   $\int_{\mathbb{R}^m} \exp(\lambda \norm{z}^a) \nu(dz) < \infty$ with $a=2$ and a $\lambda > 0$ that may be distribution-specific. The proof of \cite[Proposition 6.2.1]{vershynin2025high} establishes this light-tailed property for zero mean subgaussian random vectors. Using similar  arguments, it follows that subgaussian random vectors with nonzero mean also satisfy the light-tailed property with $a=2$ and a distribution-specific $\lambda > 0$.

\subsection{ Bounded Lipschitz metric}
\label{sec:beta}
Recalling that this is the $\beta$-metric defined in \eqref{eqn:betametric},   \Cref{assum:betametrize} then trivially holds for this distance with $\psi(t) = t$ for $t \in \mathbb{R}_+$.  \Cref{sec:wasserstein}  established that $\beta(\nu,\hat\nu) \leq d_{\text{Wasserstein}}(\nu,\hat\nu)$ for all $\nu,\hat\nu \in {\cal M}(\mathbb{W})$. Therefore, for all $\epsilon \in [0,\infty)$, we have that $\nu_\otimes^N\left[\beta(\nu,\hat\nu^N) \leq \epsilon \right] \geq \nu_\otimes^N[d_{\text{Wasserstein}}(\nu,\hat \nu^N) \leq \epsilon]$. This implies that \Cref{assum:controlconc} holds for the $\beta$-metric if it holds for the Wasserstein distance. For example, \Cref{sec:wasserstein} established that \Cref{assum:controlconc} holds for $d_\text{Wasserstein}$ if $\mathbb{W} \subseteq \mathbb{R}^m$ and  $\nu$ satisfies $\int_\mathbb{W} \exp(\lambda \norm{z}^a)\nu(dz) < \infty$ for some $a > 1$ and $\lambda > 0$. Hence,  for any sample-size $N \geq 1$ and confidence level $1-\gamma$ with $\gamma \in (0,1)$, the value of $\epsilon^N_\gamma(\text{BL})$ is given as $ \epsilon^N_\gamma(\text{BL}) \defeq \epsilon^N_\gamma(\text{Wasserstein})$, where $\epsilon^N_\gamma(\text{Wasserstein})$ is defined in \eqref{eqn:wassestein_radius}. Since the $\beta$-metric is upper bounded by $2$, we have that $\epsilon^N_\gamma(\text{BL})$ satisfies \Cref{assum:controlconc} for all sufficiently large $N$.  
\subsection{Prokhorov metric}\label{sec:prokhorov}
For any $T \subset \mathbb{W}$ and $\delta > 0$, define $T^\delta \defeq  \big\{z \in \mathbb{W}: \exists y \in T \text{ such that } \rho_\mathbb{W}(z,y) < \delta \big\}$, where $\rho_\mathbb{W}$ is a metric on $\mathbb{W}$ that metrizes its topology. The Prokhorov distance between $\nu, \hat\nu \in {\cal M}(\mathbb{W})$ is given by $d_{\text{Prokhorov}}(\nu,\hat\nu) \defeq \inf \{\delta > 0: \nu(T) \leq  \hat\nu(T^\delta) + \delta \ \forall \text{ Borel sets } T \}$ \cite[Section 11.3]{dudley2018real}. Since $\beta(\nu,\hat\nu) \leq  2 d_{\text{Prokhorov}}(\nu,\hat\nu)$ for all $\nu,\hat\nu \in {\cal M}(\mathbb{W})$ \cite[Corollary 2]{dudley1968distances}, \Cref{assum:betametrize} holds with $\psi(t) = 2t$ for $t \in \mathbb{R}_+$. The $\beta$-metric is also lower bounded by the Prokhorov metric as $d_{\text{Prokhorov}}(\nu,\hat\nu) \leq \sqrt{3\beta(\nu,\hat\nu)/2}$ for all $\nu,\hat\nu \in {\cal M}(\mathbb{W})$ \cite[Corollary 3]{dudley1968distances}.  \Cref{sec:beta} established that $\beta \leq d_{\text{Wasserstein}}$, and hence $d_{\text{Prokhorov}} \leq \sqrt{3 d_{\text{Wasserstein}}/2}$.   Therefore, $\nu_\otimes^N\left[d_{\text{Prokhorov}}(\nu,\hat\nu^N) \leq \epsilon \right] \geq \nu_\otimes^N\left[d_{\text{Wasserstein}}(\nu,\hat \nu^N) \leq 2\epsilon^2/3 \right]$. Hence, \Cref{assum:controlconc} holds for the Prokhorov distance if it holds for the Wasserstein distance. Thus, as done in \Cref{sec:beta}, if $\mathbb{W} \subseteq \mathbb{R}^m$ and $\nu$ satisfies $\int_\mathbb{W} \exp(\lambda \norm{z}^a)\nu(dz) < \infty$ for some $a > 1$ and $\lambda > 0$, then for any sample-size $N \geq 1$ and confidence level $1-\gamma \in (0,1)$, equating $2\epsilon^2/3$ to $\epsilon^N_\gamma(\text{Wasserstein})$ gives us $\epsilon^N_\gamma(\text{Prokhorov}) \defeq \sqrt{\frac{3}{2}\epsilon^N_\gamma(\text{Wasserstein})}$, where $\epsilon^N_\gamma(\text{Wasserstein})$ is defined in \eqref{eqn:wassestein_radius}. Since $d_{\text{Prokhorov}} \leq 1$, we have that $\epsilon^N_\gamma(\text{Prokhorov})$ satisfies \Cref{assum:controlconc} for all sufficiently large $N$.

\section{Conclusions}\label{sec:conclusions}
We studied the data-driven properties of Borel space RMDPs  with data-driven distance-based ambiguity sets. We proposed an axiomatic characterization of the distance function (Assumptions \ref{assum:distancemetrize} and \ref{assum:controlconc}) and established that our RMDPs satisfy the following three data-driven guarantees: (i) convergence of the robust optimal value function and the out-of-sample value function to the true optimal value function; (ii) a guarantee that the robust optimal value function serves as a high probability upper bound on the out-of-sample value function; (iii) a probabilistic convergence rate and sample complexity. We also derived out-of-distribution performance bounds. We identified several well-studied distances that satisfy the conditions imposed on the distance function. We also performed data-driven analyses of empirical MDPs and noted that they do not satisfy certain finite sample performance guarantees.

The axiomatic approach pursued in this paper, despite decoupling the statistical analyses from the computational aspects, implicitly assumes that the decision-maker can solve the data-driven RMDP to obtain the robust optimal value function and the robust optimal policy. In practice, it is computationally challenging to solve RMDPs on arbitrary Borel spaces. One approach to deal with this computational intractability issue is to exploit the properties of the specific distance function along with suitable discretizations of the spaces under consideration, and then analyze the performance loss due to statistical and computational approximations. A rigorous study of this approach is  deferred to future research. Another potential direction for
future research is to extend the analyses in this paper to unbounded cost functions. It would also be interesting to generalize our analyses to investigate the data-driven properties of average cost problems.

\bibliographystyle{abbrv}  
\bibliography{paper}

\pagebreak

\begin{appendix}
\section{List of notations}
\label{appendix:notation}
\begin{table}[!ht]
\begin{tabular}{l|l|l}
Notation                                        & Description                                                                                                     & Reference
\\
\hline
\noalign{\vskip 1mm}    
$\tilde J^{N,\epsilon}$                         & Robust optimal value function                                                                                   &    \eqref{eqn:robustmdp}    \\
$\hat\pi^N$                                     & Robust optimal policy                                                                                           &  p. 3        \\
$\tilde \Phi^{N,\epsilon}$                      & Robust Bellman operator                                                                                         &   \eqref{eqn:robustbellman}        \\
$J^*$                                           & Optimal value function                                                                                          &  \eqref{eqn:mdpoptimality}         \\
$J(\hat\pi^N,\cdot)$                            & Out-of-sample value function                                                                                    &   p. 3        \\
$\Phi, \Phi_\pi$                                & Bellman and evaluation operator                                                                                 & \eqref{eqn:bellman},  \eqref{eqn:eval}          \\
$Q_{t,v}, \tilde Q_{t,v}^{N,\epsilon}$          & $Q$- and robust $Q$-operator                                                                                    &  \eqref{eqn:Q_t}, \eqref{eqn:tilde_Q_t}         \\
${\bf Diff}$                    & \begin{tabular}[l]{@{}l@{}}Quantity to upper bound the difference between\\ robust and true values\end{tabular} &   \eqref{eqn:diff}        \\
$Q_{t,v}^{\text{true}}, \Phi_{t}^{\text{true}}$ & \begin{tabular}[l]{@{}l@{}}$Q$-operator and Bellman operator used in\\ out-of-distribution analyses\end{tabular}           & \eqref{eqn:ood_Q_real}, \eqref{eqn:ood_phi_real}
\end{tabular}
\caption{\small MDP and data-driven RMDP notations.}
\end{table}

\section{Weak continuity of transition kernel}
 \label{appendix:transitioncont}
 Fix a $\hat \nu \in {\cal M}(\mathbb{W}), \epsilon \in [0,\infty)$, and consider the ambiguity set ${\cal P}(\hat\nu, \epsilon) \defeq \{\nu \in {\cal M}(\mathbb{W}):d(\nu,\hat \nu) \leq \epsilon\}$. The transition kernel $Q$ is a stochastic kernel from $\mathbb{K} \times {\cal P}(\hat\nu,\epsilon)$ to $\mathbb{X}$ given by $Q(\cdot|x,a,\nu) \defeq \nu \circ F^{-1}(x,a,\cdot)$ for $(x,a) \in \mathbb{K}$ and $\nu \in {\cal P}(\hat\nu,\epsilon)$. The transition kernel is weakly continuous if $(x,a,\nu) \mapsto \int_\mathbb{X} u(z)Q(dz|x,a,\nu)$ is continuous over $\mathbb{K} \times {\cal P}(\hat\nu,\epsilon)$ for any $u \in C_b(\mathbb{X})$. Here,  ${\cal P}(\hat\nu,\epsilon)$ is endowed with the topology of weak convergence.
 
\begin{lem}[Proposition 6.2 in \cite{gonzalez2003minimax}]\label{lem:transitioncont}
\normalfont
If  \Cref{assum:mdpassum}(b) holds, then the  transition kernel is weakly continuous. 
\end{lem}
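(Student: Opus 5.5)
We must show that for every $u \in C_b(\mathbb{X})$ the map $(x,a,\nu) \mapsto \int_\mathbb{X} u(y)\,Q(dy|x,a,\nu) = \int_\mathbb{W} u(F(x,a,z))\,\nu(dz)$ is continuous on $\mathbb{K} \times {\cal P}(\hat\nu,\epsilon)$. The equality is just the change-of-variables formula for the pushforward $\nu\circ F^{-1}(x,a,\cdot)$. Since $\mathbb{K}$ and ${\cal P}(\hat\nu,\epsilon)$ (weak topology on ${\cal M}(\mathbb{W})$, metrizable by $\beta$ since $\mathbb{W}$ is separable) are metric spaces, sequential continuity suffices. So fix $(x_k,a_k,\nu^k)\to(x,a,\nu)$, put $g_k(z)\defeq u(F(x_k,a_k,z))$ and $g(z)\defeq u(F(x,a,z))$, and aim to show $\int g_k\,d\nu^k \to \int g\,d\nu$.

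I would split the error as
\[
\Bigl|\int g_k\,d\nu^k - \int g\,d\nu\Bigr| \le \Bigl|\int g\,d\nu^k - \int g\,d\nu\Bigr| + \int |g_k - g|\,d\nu^k .
\]
The first term tends to $0$ by weak convergence $\nu^k\to\nu$, since $g = u\circ F(x,a,\cdot)$ is bounded and continuous by \Cref{assum:mdpassum}(b).

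The second term is the main obstacle, because $g_k\to g$ only pointwise, not uniformly on $\mathbb{W}$, while the measures $\nu^k$ are also moving. I would use tightness. By Prokhorov's theorem (note $\mathbb{W}$ is Borel, but the convergent sequence $\{\nu^k\}\cup\{\nu\}$ is still uniformly tight; if needed, pass to the Polish completion, or use Skorokhod representation instead), for $\eta>0$ pick a compact $K\subseteq\mathbb{W}$ with $\nu^k(K^c)\le\eta$ for all $k$. Then $\int_{K^c}|g_k-g|\,d\nu^k \le 2\norm{u}_\infty\eta$.

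On $K$, I claim $\sup_{z\in K}|g_k(z)-g(z)|\to 0$. The set $C\defeq\{(x_k,a_k)\}_k\cup\{(x,a)\}$ is compact in $\mathbb{K}$, so $F$ is uniformly continuous on $C\times K$. However, $u$ is merely continuous, not uniformly continuous, so argue by contradiction instead. Suppose there exist $z_k\in K$ with $|u(F(x_k,a_k,z_k)) - u(F(x,a,z_k))|\ge\delta$ along a subsequence. Pass to a further subsequence with $z_k\to z\in K$. Joint continuity of $F$ gives $F(x_k,a_k,z_k)\to F(x,a,z)$ and $F(x,a,z_k)\to F(x,a,z)$, and continuity of $u$ then forces the difference to $0$, a contradiction.

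Combining the pieces gives $\limsup_k$ of the error $\le 2\norm{u}_\infty\eta$, and $\eta$ is arbitrary, which proves the claim.

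A cleaner alternative is the Skorokhod representation: with $Z_k\sim\nu^k$ and $Z_k\to Z\sim\nu$ almost surely, joint continuity of $u\circ F$ gives $u(F(x_k,a_k,Z_k))\to u(F(x,a,Z))$ almost surely, and bounded convergence finishes the proof. Skorokhod requires a separable metric space, which $\mathbb{W}$ is. I would present this version and mention the tightness argument as a check.
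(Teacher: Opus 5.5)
Your proof is correct, and it supplies something the paper does not: the paper gives no argument for this lemma and simply imports it as \cite[Proposition 6.2]{gonzalez2003minimax}. Both of your versions work. The Skorokhod representation only needs $\mathbb{W}$ to be a separable metric space. After that, joint continuity of $F$ on $\mathbb{K}\times\mathbb{W}$, continuity of $u$, and bounded convergence finish the job.

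An even shorter proof is already available inside the paper. Apply \Cref{lem:serfozo} on $S=\mathbb{W}$ to the measures $\nu^k\to\nu$ and the functions $z\mapsto u(F(x_k,a_k,z))$. These are bounded by $\norm{u}_\infty$ uniformly in $k$. They also converge continuously to $z\mapsto u(F(x,a,z))$, i.e.\ $u(F(x_k,a_k,z^k))\to u(F(x,a,z))$ whenever $z^k\to z$, by exactly the joint-continuity observation in your contradiction step. This is the same generalized dominated convergence theorem the paper uses in the proofs of \Cref{thm:robustconvergence} and \Cref{thm:oosconvergence}. Your tightness argument is essentially a proof of it in this special case. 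What your route buys is independence from the Langen--Serfozo result, at the price of a tightness or coupling step.

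One small correction to the tightness variant. A compact $K\subseteq\mathbb{W}$ with $\sup_k\nu^k(K^c)\le\eta$ does exist, but Prokhorov's theorem is the wrong citation. Its converse half is stated for Polish spaces, and a Borel subset of a Polish space need not be complete. Passing to the completion would not help either: compacts there can leave $\mathbb{W}$, while your contradiction step needs the subsequential limit $z$ to lie in $\mathbb{W}$, where $F$ is defined. The right justification has two parts:
\begin{itemize}
\item Every law on a Borel subset of a Polish space is tight. Extend it to the ambient Polish space, where it is inner regular by compacts, and note that $\mathbb{W}$ carries full mass.
\item A weakly convergent sequence of tight laws with tight limit, on any metric space, is uniformly tight (Le Cam).
\end{itemize}
With that fix, or by presenting the Skorokhod version as you plan, the argument is complete.
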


\section{Proofs for Section  \ref{sec:data-driven_rmdp}}
\label{appendix:bounded_cost}
\subsection{Proofs for \Cref{sec:valueconv}}
\label{appendix:bounded_cost_asymptotic}
We establish intermediate lemmas that will be used to prove our asymptotic convergence claims.

\begin{lem}\label{lem:bellmanberge}
\normalfont
Suppose \Cref{assum:mdpassum} is satisfied. The following then holds.
\begin{enumerate}[(a)]
 \item Fix a $v \in C_b(\mathbb{X})$. Then $\mathbb{X} \ni x \mapsto (\Phi_t v)(x)$ is continuous and bounded for all $t \in \mathbb{N}$.
 \item $J^*$ is continuous over $\mathbb{X}$ and $\norm{J^*}_\infty \leq \norm{c}_\infty/(1-\alpha)$.
\end{enumerate}
\end{lem}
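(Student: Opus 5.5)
We will prove part (a) by induction on $t$, using Berge's maximum theorem at each step, and then deduce part (b) from (a) together with \Cref{thm:bellman}(a),(c).

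\textbf{Step 1 (one Bellman step preserves $C_b$).} Fix $u \in C_b(\mathbb{X})$ and define
\[
G_u(x,a) \defeq \int_\mathbb{W} \big(c(x,a,z) + \alpha\, u(F(x,a,z))\big)\mu(dz), \qquad (x,a)\in\mathbb{K}.
\]
We show that $G_u$ is continuous on $\mathbb{K}$. Take $(x_k,a_k) \to (x,a)$ in $\mathbb{K}$. By \Cref{assum:mdpassum}(b),(c), for each fixed $z$ the integrand $c(x_k,a_k,z) + \alpha u(F(x_k,a_k,z))$ converges to $c(x,a,z)+\alpha u(F(x,a,z))$. The integrand is bounded uniformly by $\norm{c}_\infty + \alpha\norm{u}_\infty$, so dominated convergence gives $G_u(x_k,a_k)\to G_u(x,a)$. (Alternatively, one can invoke \Cref{lem:transitioncont} with the singleton ambiguity set $\{\mu\}$.) The bound $\abs{G_u}\le \norm{c}_\infty + \alpha\norm{u}_\infty$ also follows directly.

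By \Cref{assum:mdpassum}(a), $x\mapsto A(x)$ is compact-valued and continuous. Berge's maximum theorem, applied to $\min_{a\in A(x)} G_u(x,a)$, then shows that $\Phi u$ is continuous. Moreover $\norm{\Phi u}_\infty \le \norm{c}_\infty + \alpha\norm{u}_\infty$.

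\textbf{Step 2 (induction).} Since $\Phi_0 v = v \in C_b(\mathbb{X})$ and $\Phi_t v = \Phi(\Phi_{t-1}v)$, applying Step 1 repeatedly shows $\Phi_t v \in C_b(\mathbb{X})$ for all $t \in \mathbb{N}$. This proves (a).

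\textbf{Step 3 (part (b)).} We need $\Phi_t \mathbf{0} \to J^*$ uniformly. \Cref{thm:bellman}(c) is stated for the policy-evaluation operators $\Phi_{t,\pi}$, not for $\Phi_t$, so we would argue directly. The operator $\Phi$ is an $\alpha$-contraction in the sup norm on $\Sigma_b(\mathbb{X})$, since $\abs{\min_a f - \min_a g} \le \sup_a \abs{f-g}$. Its unique fixed point is $J^*$ by \Cref{thm:bellman}(a). Hence
\[
\norm{\Phi_t\mathbf{0} - J^*}_\infty \le \alpha^t \norm{J^*}_\infty \to 0.
\]
A uniform limit of continuous functions is continuous, so $J^*$ is continuous.

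For the norm bound, iterating the estimate from Step 1 starting from $\mathbf{0}$ gives
\[
\norm{\Phi_t\mathbf{0}}_\infty \le \norm{c}_\infty \sum_{s<t}\alpha^s \le \frac{\norm{c}_\infty}{1-\alpha}.
\]
Passing to the limit yields $\norm{J^*}_\infty \le \norm{c}_\infty/(1-\alpha)$. Alternatively, this bound follows directly from \eqref{eqn:mdpvalue}, since $0\le c\le\norm{c}_\infty$.

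\textbf{Main obstacle.} The only delicate point is the continuity of $G_u$ on $\mathbb{K}$ and the correct application of Berge's theorem. Berge's theorem requires $A(\cdot)$ to be both upper and lower hemicontinuous with compact values, which is exactly what \Cref{assum:mdpassum}(a) provides, and it requires the objective to be jointly continuous on the graph $\mathbb{K}$, which Step 1 establishes via dominated convergence. The remaining steps are routine.
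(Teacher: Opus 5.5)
Your proposal is correct and follows the paper's proof: induction on $t$ with dominated convergence and Berge's maximum theorem for (a), and $J^*$ as a uniform limit of the continuous iterates $\Phi_t\mathbf{0}$ for (b). The only deviation is that the paper simply cites \Cref{thm:bellman}(c) for $\Phi_t\mathbf{0}\to J^*$ uniformly, whereas you derive it from the $\alpha$-contraction of $\Phi$ and the fixed-point property in \Cref{thm:bellman}(a); yours is the more accurate justification, since \Cref{thm:bellman}(c) as stated concerns $\Phi_{t,\pi}$.
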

\begin{proof}[Proof of \Cref{lem:bellmanberge}]
We prove part (a) using induction on $t \in \mathbb{N}$. Consider the base case that corresponds to $t = 1$.  From \Cref{assum:mdpassum}(b),(c), the functions $F$ and $c$ are continuous over $\mathbb{K} \times \mathbb{W}$, and $c$ is bounded over $\mathbb{K} \times \mathbb{W}$. Hence, $(x,a) \mapsto \int_\mathbb{W} \left(c(x,a,z) + \alpha  v(F(x,a,z))\right)\mu(dz)$ is continuous over $\mathbb{K}$ by the dominated convergence theorem. Next, the set-valued mapping $\mathbb{X} \ni x \mapsto A(x)$ is compact-valued and continuous by \Cref{assum:mdpassum}(a). Therefore, the continuity of $\mathbb{X} \ni x \mapsto (\Phi v)(x)$ follows by Berge's maximum theorem \cite[Theorem 17.31]{aliprantis2006infinite}. The boundedness of $\Phi v$ follows by noting that $c$ and $v$ are bounded. This establishes the base case, and the induction step  follows by identical base case arguments. Regarding part (b), from \Cref{thm:bellman}(c), $\{\Phi_t{\bf 0}\}_{t \in \mathbb{N}}$ converges to $J^*$ uniformly over $\mathbb{X}$. From part (a) of this lemma, $\Phi_t{\bf 0}$ is continuous over $\mathbb{X}$ for each $t \in \mathbb{N}$. Hence, $J^*$ is continuous over $\mathbb{X}$. The upper bound on $\norm{J^*}_\infty$ follows from the definition of $J^*$ in \eqref{eqn:mdpoptimality} along with the boundedness of $c$. 
\end{proof}

\begin{lem}
\label{lem:ballconv} 
\normalfont
Suppose the radii of ambiguity sets depend on the sample-size $N$ such that $\lim_{N \to \infty}\epsilon^N = 0$. If \cref{assum:distancemetrize} holds, then $\mu_\otimes^\infty\left[\nu^N \to \mu \text{ weakly } \forall \nu^N\in {\cal P}^N(\epsilon^N)\right] = 1$, where $\mu$ is the true distribution of the disturbance variable.
\end{lem}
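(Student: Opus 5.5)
The plan is to combine two facts on one full-measure event. The first is the almost sure weak convergence of the empirical distribution, $\hat\mu^N \to \mu$ weakly \cite[Theorem 11.4.1]{dudley2018real}. The second is the triangle inequality for the bounded Lipschitz metric $\beta$, which is a genuine metric on ${\cal M}(\mathbb{W})$ because $\mathbb{W}$ is separable. Let $\Omega_0 \subseteq \mathbb{W}^\infty$ be the event on which $\hat\mu^N \to \mu$ weakly. By Varadarajan's theorem, as cited, $\mu_\otimes^\infty[\Omega_0] = 1$. Since $\beta$ metrizes weak convergence on ${\cal M}(\mathbb{W})$, on $\Omega_0$ we have $\beta(\hat\mu^N,\mu) \to 0$. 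The rest of the argument is deterministic and works pathwise on $\Omega_0$.

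Fix a sample path in $\Omega_0$ and an arbitrary sequence $\nu^N \in {\cal P}^N(\epsilon^N)$. By definition of the ambiguity set, $d(\nu^N,\hat\mu^N) \le \epsilon^N \to 0$. We apply \Cref{assum:distancemetrize} with the sequences $\nu^k = \nu^N$ and $\rho^k = \hat\mu^N$. This is legitimate because the assumption is quantified over \emph{all} pairs of sequences in ${\cal M}(\mathbb{W})$, so it applies to the path-dependent sequences here. It gives $\beta(\nu^N,\hat\mu^N) \to 0$. The triangle inequality then yields
\[
\beta(\nu^N,\mu) \le \beta(\nu^N,\hat\mu^N) + \beta(\hat\mu^N,\mu) \to 0,
\]
and since $\beta$ metrizes weak convergence \cite[Theorem 11.3.3]{dudley2018real}, $\nu^N \to \mu$ weakly.

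Because the sequence $\{\nu^N\}$ was arbitrary and $\Omega_0$ does not depend on it, the event in the statement contains $\Omega_0$. Measurability of that event is therefore not an issue, since it holds on a set of full measure; if needed, one can read the probability as an inner measure. This gives probability one.

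The only delicate point is the order of quantifiers: the null set must be chosen before the sequence $\nu^N$ is picked. This works because the only probabilistic input is the convergence $\hat\mu^N \to \mu$, and that does not involve $\nu^N$ at all. One also has to check that \Cref{assum:distancemetrize} is used in the right argument order, $d(\nu^N,\hat\mu^N)$, which matches the definition \eqref{eqn:ambiguity_set}.
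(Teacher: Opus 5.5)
Your proof is correct and follows the same route as the paper. Both use the triangle inequality $\beta(\nu^N,\mu)\le\beta(\nu^N,\hat\mu^N)+\beta(\hat\mu^N,\mu)$: the second term vanishes almost surely by Varadarajan's theorem, the first by \Cref{assum:distancemetrize} applied to $d(\nu^N,\hat\mu^N)\le\epsilon^N\to 0$, and your choice of the full-measure event before $\nu^N$ is picked makes explicit a quantifier order the paper leaves implicit.
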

\begin{proof}[Proof of \Cref{lem:ballconv}]
We build on the proof arguments in \cite[Lemma 6, Appendix A.1]{ramani2022robust}. That lemma established a similar convergence result for data-driven distance-based ambiguity sets containing distributions defined on finite sample spaces. Since  $\beta$-metric metrizes the topology of weak convergence \cite[Theorem 11.3.3]{dudley2018real}, it suffices to establish $\mu_\otimes^\infty\big[\beta(\nu^N,\mu) \to  0 \ \forall \nu^N\in {\cal P}^N(\epsilon^N)\big] = 1$. By the triangle inequality, $
\beta(\nu^N,\mu) \leq \beta(\nu^N,\hat{\mu}^N) + \beta(\hat{\mu}^N,\mu)$. From \cite[Theorem 11.4.1]{dudley2018real}, 
$\mu_\otimes^\infty\big[\beta(\hat{\mu}^N,\mu) \to 0\big] = 1$. To establish the convergence of $\beta(\nu^N,\hat{\mu}^N)$ to $0$, since $\nu^N \in {\cal P}^N(\epsilon^N)$, we have $d(\nu^N,\hat{\mu}^N) \leq \epsilon^N$. Hence, the nonnegativity of $d$ along with $\lim_{N\rightarrow \infty}\epsilon^N = 0$ ensures that $\lim_{N\rightarrow \infty} d(\nu^N,\hat{\mu}^N) = 0$. \Cref{assum:distancemetrize} then  implies $\beta(\nu^N,\hat{\mu}^N) \to 0$. 
\end{proof}

Our asymptotic convergences proofs rely on the following version of generalized dominated convergence theorem.
\begin{lem}[Theorem 3.5 in 
\cite{langen1981convergence,serfozo1982convergence}]\label{lem:serfozo}
\normalfont
Suppose $S$ is a metric space. Let $\{\mu^k\}, \mu \in {\cal M}(S)$ such that $\mu^k \to \mu$ weakly. Let $\{u^k\}, u$ be real-valued functions over $S$. If $\sup_{k \in \mathbb{N}}\norm{u^k}_\infty < \infty$ and $u^k(x^k) \to u(x) \forall x^k \to x$, then $\int_S u^k(x)\mu^k(dx) \to \int_S u(x)\mu(dx)$.
\end{lem}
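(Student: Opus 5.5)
The plan is to split the difference as
\[
\int_S u^k\,d\mu^k - \int_S u\,d\mu = \int_S (u^k - u)\,d\mu^k + \Big(\int_S u\,d\mu^k - \int_S u\,d\mu\Big).
\]
I would then show that each term tends to zero. The argument uses only the portmanteau theorem, so it needs no separability of $S$ and no Skorokhod representation. Throughout I take the $u^k$ to be measurable (implicit in the statement) and set $M \defeq \sup_k \norm{u^k}_\infty < \infty$.

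\emph{Step 1: $u$ is bounded and continuous.} Taking the constant sequence $x^k = x$ gives $u^k(x) \to u(x)$, so $\abs{u(x)} \leq M$.

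For continuity, suppose $x_j \to x$. Because $u^k(x_j) \to u(x_j)$ for each $j$, I can choose indices $k_1 < k_2 < \cdots$ with $\abs{u^{k_j}(x_j) - u(x_j)} < 1/j$. Define a sequence $y^k$ by $y^{k_j} = x_j$ and $y^k = x$ for all other $k$. Then $y^k \to x$, so continuous convergence gives $u^k(y^k) \to u(x)$, and in particular $u^{k_j}(x_j) \to u(x)$. It follows that $u(x_j) \to u(x)$.

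Since $u \in C_b(S)$, weak convergence immediately gives that the second term tends to zero.

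\emph{Step 2: the first term tends to zero.} Fix $\eta > 0$ and let $G_k \defeq \{x : \abs{u^k(x) - u(x)} > \eta\}$. Then
\[
\int_S \abs{u^k - u}\,d\mu^k \leq \eta + 2M\,\mu^k(G_k),
\]
so it suffices to show $\limsup_k \mu^k(G_k) = 0$.

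Let $F_m$ be the closure of $\bigcup_{k \geq m} G_k$. These are closed and decreasing in $m$. The key claim is that $\bigcap_m F_m = \emptyset$. Suppose instead that some $x$ lies in every $F_m$. Then I can pick $k_j \to \infty$ strictly increasing and points $x_j \in G_{k_j}$ with $\varrho(x_j, x) < 1/j$. Padding with $x$ at the other indices, exactly as in Step 1, continuous convergence yields $u^{k_j}(x_j) \to u(x)$. Step 1 also gives $u(x_j) \to u(x)$. Hence $\abs{u^{k_j}(x_j) - u(x_j)} \to 0$, which contradicts $x_j \in G_{k_j}$.

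Now, for $k \geq m$ the portmanteau theorem for closed sets gives
\[
\limsup_k \mu^k(G_k) \leq \limsup_k \mu^k(F_m) \leq \mu(F_m).
\]
The sets $F_m$ decrease to the empty set, so $\mu(F_m) \downarrow 0$ by continuity from above. Therefore $\limsup_k \int_S \abs{u^k - u}\,d\mu^k \leq \eta$, and since $\eta > 0$ was arbitrary the first term tends to zero.

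I expect the main obstacle to be the emptiness of $\bigcap_m F_m$ in Step 2. This is where continuous convergence must replace uniform convergence on compacts, since tightness is not available in a general metric space. It also relies on the continuity of $u$ established in Step 1.
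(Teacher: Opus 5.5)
Your proof is correct and complete. The paper does not prove this lemma; it cites it as Theorem~3.5 of \cite{langen1981convergence,serfozo1982convergence} and uses it as a black box in the proofs of \Cref{thm:robustconvergence} and \Cref{thm:oosconvergence}. So the comparison is between a citation and your self-contained argument.

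Your route has three parts. First, you split off $\int u\,d\mu^k-\int u\,d\mu$ and show that continuous convergence forces $u\in C_b(S)$, so weak convergence handles this term directly. Second, you control $\int \lvert u^k-u\rvert\,d\mu^k$ by showing that the closed sets $F_m=\overline{\bigcup_{k\ge m}\{\lvert u^k-u\rvert>\eta\}}$ decrease to $\emptyset$. Third, you apply the closed-set half of the portmanteau theorem. This closed-set device is the one used in Billingsley's continuous-mapping theorem for varying maps.

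What your approach gives you:
\begin{enumerate}[(i)]
\item It needs only the inequality $\limsup_k\mu^k(F)\le\mu(F)$ for closed $F$. It therefore holds in an arbitrary, possibly non-separable, metric space, with no Skorokhod representation or tightness.
\item It shows that continuity of the limit $u$ follows from the hypothesis rather than being an extra assumption. In the paper's two applications the limits $c(x,a,\cdot)$ and $c(x,a^*,\cdot)+\alpha J^*(F(x,a^*,\cdot))$ are continuous anyway, so nothing changes there.
\item It gives the slightly stronger conclusion $\int\lvert u^k-u\rvert\,d\mu^k\to 0$.
\item It makes explicit the measurability of the $u^k$, which the statement leaves implicit.
\end{enumerate}

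The citation, in turn, keeps the paper short and links the lemma to the literature on convergence of integrals under varying measures, where one-sided (Fatou-type) variants are also available.
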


\begin{proof}[Proof of \Cref{thm:robustconvergence}]
We use a successive approximation approach to  prove the stronger claim that $\mu_\otimes^\infty \big[\lim_{N \to \infty}\tilde J^{N,\epsilon^N}(x^N) = J^*(x) \ \forall x^N \to x \big] = 1$. Recalling that $\mu_\otimes^\infty$ is defined on $\times_{N=1}^\infty \mathbb{W}$, for any realization $(w^1,w^2,\ldots) \in \times_{N=1}^\infty \mathbb{W}$, we prove that $\lim_{N \to \infty}\tilde J^{N,\epsilon^N}(x^N) = J^*(x) \ \forall x^N \to x$ if $\nu^N \to \mu$ weakly for all sequences $\nu^N \in {\cal P}^N(\epsilon^N)$. Since \Cref{assum:distancemetrize} holds and $\lim_{N \to \infty}\epsilon^N = 0$, \Cref{lem:ballconv} implies $\mu_\otimes^\infty \big[\lim_{N \to \infty}\tilde J^{N,\epsilon^N}(x^N) = J^*(x) \ \forall x^N \to x \big] = 1$.

Let $x^N$ be any sequence in $\mathbb{X}$ such that $x^N \to x \in \mathbb{X}$. For any fixed $t \in \mathbb{N}$, we have for all $N \geq 1$,
\begin{align*}
 &\abs{\tilde J^{N,\epsilon^N}(x^N) - J^*(x)} \leq \abs{\tilde J^{N,\epsilon^N}(x^N) - (\tilde \Phi^{N,\epsilon^N}_t{\bf 0})(x^N)} + \abs{(\tilde \Phi^{N,\epsilon^N}_t{\bf 0})(x^N) - (\Phi_t{\bf 0})(x)} + \\
 &\qquad \qquad \qquad \qquad \qquad \qquad \abs{(\Phi_t{\bf 0})(x) - J^*(x)}\\
 &\qquad \qquad \leq (\norm{c}_\infty \alpha^t)/(1 - \alpha) +  \abs{(\tilde \Phi^{N,\epsilon^N}_t{\bf 0})(x^N) - (\Phi_t{\bf 0})(x)} + (\norm{c}_\infty \alpha^t)/(1 - \alpha),
\end{align*}
where the first and last term in the second inequality follow from \Cref{thm:robustbellman}(c) and \Cref{thm:bellman}(c), respectively.  Hence, $\lim_{N \to \infty}\tilde J^{N,\epsilon^N}(x^N) = J^*(x)$ if $\lim_{N \to \infty}(\tilde \Phi^{N,\epsilon^N}_t{\bf 0})(x^N) \\= (\Phi_t{\bf 0})(x)$ for all $t \in \mathbb{N}$. Using induction, we prove that $\lim_{N \to \infty}(\tilde \Phi^{N,\epsilon^N}_t{\bf 0})(x^N) = (\Phi_t{\bf 0})(x)$ and $\sup_{N \in \mathbb{N}}\norm{\tilde \Phi^{N,\epsilon^N}_t {\bf 0}}_\infty < \infty$ for each $t \in \mathbb{N}$.

For the base case $t=1$, the definition of $\tilde \Phi^{N,\epsilon^N}$ in \eqref{eqn:robustbellman} along with the boundedness of $c$ by \Cref{assum:mdpassum}(c) implies $\sup_{N \in \mathbb{N}}\norm{\tilde \Phi^{N,\epsilon^N} {\bf 0}}_\infty < \infty$. Next, for any $N \geq 1$, we have,
\begingroup
\allowdisplaybreaks
\begin{align*}
&\abs{(\tilde \Phi^{N,\epsilon^N}{\bf 0})(x^N) - (\Phi{\bf 0})(x)} \leq \abs{(\tilde \Phi^{N,\epsilon^N}{\bf 0})(x^N) - (\Phi{\bf 0})(x^N)} + \abs{(\Phi {\bf 0})(x^N) - (\Phi{\bf 0})(x)}\\
&= \Bigg|\min_{a \in A(x^N)}\sup_{\nu^N \in {\cal P}^N(\epsilon^N)}\int_\mathbb{W} c(x^N,a,z) \nu^N(dz) - \min_{a \in A(x^N)}\int_\mathbb{W} c(x^N,a,z) \mu(dz) \Bigg| +\\ 
& \qquad \qquad \qquad \abs{(\Phi {\bf 0})(x^N) - (\Phi{\bf 0})(x)}\\
&\leq \underbrace{\sup_{a \in A(x^N)}\abs{\sup_{\nu^N \in {\cal P}^N(\epsilon^N)}\int_\mathbb{W} c(x^N,a,z) \nu^N(dz) -  \int_\mathbb{W} c(x^N,a,z) \mu(dz)}}_{{\bf (I)}} + \\
&\qquad \qquad \qquad \underbrace{\abs{(\Phi {\bf 0})(x^N) - (\Phi{\bf 0})(x)}}_{{\bf (II)}}.
\end{align*}
\endgroup
We prove that ${\bf (I)}$ and ${\bf (II)}$ converge to $0$ as $N \to \infty$. The convergence of ${\bf (II)}$ to $0$ as $N \to \infty$ follows from \Cref{lem:bellmanberge}(a). Regarding ${\bf (I)}$, suppose for a contradiction, it does not converge to $0$ as $N \to \infty$. Hence, there exists $\delta > 0$,  subsequences $x^{N_k}$ of $x^N$ and $a^{N_k} \in A(x^{N_k})$ such that
\begin{equation}\label{eqn:robustconvtemp1}
\Bigg|\sup_{\nu^{N_k} \in {\cal P}^{N_k}(\epsilon^{N_k})} \int_\mathbb{W} c(x^{N_k},a^{N_k},z) \nu^{N_k}(dz) - \int_\mathbb{W} c(x^{N_k},a^{N_k},z) \mu(dz)\Bigg| > \delta \ \forall k \in \mathbb{N}.
\end{equation}
Since $x^{N_k} \to x$ and the set-valued mapping $x \mapsto A(x)$ is compact-valued and continuous by \Cref{assum:mdpassum}(a), there exists a subsequence $a^{N_{k_i}}$ of $a^{N_k}$ such that $a^{N_{k_i}} \to a \in A(x)$. By the definition of supremum, $\forall i \in \mathbb{N}, \ \exists \ \bar \nu^{N_{k_i}} \in {\cal P}^{N_{k_i}}(\epsilon^{N_{k_i}})$ such that
\begin{align*}
&\Bigg|\sup_{\nu^{N_{k_i}} \in {\cal P}^{N_{k_i}}(\epsilon^{N_{k_i}})}\int_\mathbb{W} c(x^{N_{k_i}},a^{N_{k_i}},z) \nu^{N_{k_i}}(dz) - \int_\mathbb{W} c(x,a,z)\mu(dz)\Bigg|\\
&\qquad < \delta/3 + \Big|\int_\mathbb{W} c(x^{N_{k_i}},a^{N_{k_i}},z) \bar \nu^{N_{k_i}}(dz) - \int_\mathbb{W} c(x,a,z)\mu(dz)\Big| \ \forall i \in \mathbb{N}.
\end{align*}
Since $c$ is continuous over $\mathbb{K} \times \mathbb{W}$ by \Cref{assum:mdpassum}(c), we have $\lim_{i \to \infty}c(x^{N_{k_i}},a^{N_{k_i}},z^i) = c(x,a,z)$ for all $z^i \to z$. Since \Cref{assum:distancemetrize} holds and $\lim_{N \to \infty}\epsilon^N = 0$, by \Cref{lem:ballconv}, $\lim_{i \to \infty}\bar \nu^{N_{k_i}} = \mu$ weakly. Since $c$ is bounded, by \Cref{lem:serfozo} , $\lim_{i \to \infty}\int_\mathbb{W} c(x^{N_{k_i}},a^{N_{k_i}},z) \bar \nu^{N_{k_i}}(dz) = \int_\mathbb{W} c(x,a,z)\mu(dz)$. Hence, for all large enough $i$,
\begin{equation}\label{eqn:robustconvtemp2}
\Bigg|\sup_{\nu^{N_{k_i}} \in {\cal P}^{N_{k_i}}(\epsilon^{N_{k_i}})}\int_\mathbb{W} c(x^{N_{k_i}},a^{N_{k_i}},z) \nu^{N_{k_i}}(dz) - \int_\mathbb{W} c(x,a,z)\mu(dz)\Bigg| < 2\delta/3.
\end{equation}
Further, from the dominated convergence theorem, 
\begin{equation}\label{eqn:robustconvtemp3}
\lim_{i \to \infty}\int_\mathbb{W} c(x^{N_{k_i}},a^{N_{k_i}},z) \mu(dz)  = \int_\mathbb{W} c(x,a,z)\mu(dz).
\end{equation}
From \eqref{eqn:robustconvtemp2} and \eqref{eqn:robustconvtemp3}, we have for all large enough $i$,
\[
\Bigg|\sup_{\nu^{N_{k_i}} \in {\cal P}^{N_{k_i}}(\epsilon^{N_{k_i}})}\int_\mathbb{W} c(x^{N_{k_i}},a^{N_{k_i}},z) \nu^{N_{k_i}}(dz) - \int_\mathbb{W} c(x^{N_{k_i}},a^{N_{k_i}},z) \mu(dz)\Bigg| < \delta,
\]
which contradicts \eqref{eqn:robustconvtemp1}. Therefore, ${\bf (I)}$ converges to $0$, thus establishing the base case. The induction step follows by an argument similar to that of the base case.
\end{proof}

\begin{proof}[Proof of \Cref{cor:robustintegralconv}]
Since Assumptions \ref{assum:mdpassum}, \ref{assum:ambiguitycompact}, and \ref{assum:distancemetrize} hold, from \Cref{thm:robustconvergence}, $\mu_\otimes^\infty\big[\lim_{N \to \infty}\tilde J^{N,\epsilon^N}(x) = J^*(x) \ \forall x \in \mathbb{X}\big] = 1$. Since $c$ is bounded by \Cref{assum:mdpassum}(c), $\sup_{N \in \mathbb{N}}\vert\vert\tilde J^{N,\epsilon^N}\vert\vert_\infty < \infty$. The result then follows by dominated convergence theorem. 
\end{proof}

\begin{proof}[Proof of \Cref{thm:oosconvergence}]
Like in the proof of \Cref{thm:robustconvergence}, we establish the stronger claim that $\mu_\otimes^\infty\big[\lim_{N \to \infty}\\  J(\hat\pi^N,x^N) = J^*(x) \ \forall x^N \to x \big] = 1$. For any realization $(w^1,w^2,\ldots) \in \times_{N=1}^\infty \mathbb{W}$, we prove that $\lim_{N \to \infty}J(\hat\pi^N,x^N) = J^*(x) \ \forall x^N \to x$ if $\nu^N \to \mu$ weakly for all sequences $\nu^N \in {\cal P}^N(\epsilon^N)$. Since \Cref{assum:distancemetrize} holds and $\lim_{N \to \infty}\epsilon^N = 0$, the claim $\mu_\otimes^\infty\big[\lim_{N \to \infty}J(\hat\pi^N,x^N) = J^*(x) \ \forall x^N \to x \big] = 1$ follows by \Cref{lem:ballconv}.

Consider any $x^N \to x$ and $t \in \mathbb{N}$. We have for all $N \geq 1$,
\begin{align*}
J(\hat\pi^N,x^N)-J^*(x) &= J(\hat\pi^N,x^N) - (\Phi_{t,\hat\pi^N}J^*)(x^N) + (\Phi_{t,\hat\pi^N}J^*)(x^N) - J^*(x) \\
&\stackrel{(a)}\leq \alpha^t\left(\norm{c}_\infty/(1-\alpha) + \norm{J^*}_\infty\right) + \abs{(\Phi_{t,\hat\pi^N}J^*)(x^N) - J^*(x)}\\
&\stackrel{(b)}\leq \left(2\norm{c}_\infty\alpha^t\right)/(1-\alpha) + \abs{(\Phi_{t,\hat\pi^N}J^*)(x^N) - J^*(x)}.
\end{align*}
The inequalities in $(a)$ and $(b)$ follow from \Cref{thm:bellman}(c) and \Cref{lem:bellmanberge}(b), respectively. Hence, $\lim_{N \to \infty}J(\hat\pi^N,x^N) = J^*(x)$ if $\lim_{N \to \infty}(\Phi_{t,\hat\pi^N}J^*)(x^N) = (\Phi_t J^*)(x)$ for all $t \in \mathbb{N}$. We first prove that every convergent subsequence of $\hat \pi^N(x^N)$ converges to $a^* \in A(x)$, where $a^*$ is an optimal action in state $x$ for the minimization problem \eqref{eqn:mdpoptimality}. Using this result, the convergence (as $N \to \infty$) of $(\Phi_{t,\hat\pi^N}J^*)(x^N)$ to $(\Phi_t J^*)(x)$ for all $t \in \mathbb{N}$ will be established using induction on $t \in \mathbb{N}$. A similar approach was employed in \cite[Theorem 4.4]{kara2020robustness} to analyze the  convergence of approximate MDP models in the nonrobust setup.

Since $x^N \to x$ and the set-valued mapping $x \mapsto A(x)$ is compact-valued and continuous by \Cref{assum:mdpassum}(a), we have that every convergent subsequence of $\hat \pi^N(x^N)$ converges to an element of $A(x)$.  Let $\hat\pi^{N_k}(x^{N_k})$ be a subsequence of $\hat\pi^N(x^N)$ that converges to an $a^* \in A(x)$. Since $\hat\pi^N$ is a robust optimal policy, by \Cref{thm:robustbellman}(b)
\begingroup
\allowdisplaybreaks
\begin{align}
\label{eqn:oostemp1}
\tilde J^{N_k,\epsilon^{N_k}}(x^{N_k}) &= \sup_{\nu^{N_k} \in {\cal P}^{N_k}(\epsilon^{N_k})}\int_\mathbb{W} \big(c(x^{N_k},\hat\pi^{N_k}(x^{N_k}),z) +\\
\nonumber
&\qquad \qquad \alpha  \tilde J^{N_k,\epsilon^{N_k}}(F(x^{N_k},\hat\pi^{N_k}(x^{N_k}),z))\big)\nu^{N_k}(dz) \quad \forall k \in \mathbb{N}.
\end{align}
\endgroup
Fix a $\delta > 0$. For any $k \in \mathbb{N}$, let $\bar\nu^{N_k} \in {\cal P}^{N_k}(\epsilon^{N_k})$ be a $\delta$-maximizer for the problem in the right-hand side of \eqref{eqn:oostemp1}. Hence,  $\int_\mathbb{W} \big(c(x^{N_k},\hat\pi^{N_k}(x^{N_k}),z) +
\alpha  \tilde J^{N_k,\epsilon^{N_k}}(F(x^{N_k}, \hat\pi^{N_k}(x^{N_k}),z))\big)\bar\nu^{N_k}(dz) \leq \tilde J^{N_k,\epsilon^{N_k}}(x^{N_k}) \leq \int_\mathbb{W} \big(c(x^{N_k},\hat\pi^{N_k}(x^{N_k}),z) + 
\alpha  \tilde J^{N_k,\epsilon^{N_k}}(F(x^{N_k},\hat\pi^{N_k}(x^{N_k}),z))\big)\bar\nu^{N_k}(dz) + \delta \ \forall k \in \mathbb{N}$. From \Cref{thm:robustconvergence} (the  stronger claim established in its proof), $\lim_{k \to \infty}\tilde J^{N_k,\epsilon^{N_k}}(x^{N_k}) = J^*(x)$. Therefore,
\begingroup
\allowdisplaybreaks
\begin{align*}
\liminf_{k \to \infty}\int_\mathbb{W} (c(x^{N_k},\hat\pi^{N_k}(x^{N_k}),z) +
\alpha  \tilde J^{N_k,\epsilon^{N_k}}(F(x^{N_k}, \hat\pi^{N_k}(x^{N_k}),z)))\bar\nu^{N_k}(dz) \leq  J^*(x)\\
\leq \limsup_{k \to \infty}\int_\mathbb{W} (c(x^{N_k},\hat\pi^{N_k}(x^{N_k}),z) + 
\alpha  \tilde J^{N_k,\epsilon^{N_k}}(F(x^{N_k},\hat\pi^{N_k}(x^{N_k}),z)))\bar\nu^{N_k}(dz) + \delta. 
\end{align*}
\endgroup
We first prove the equality of  liminf and limsup in the above expression. Since $c$ is continuous over $\mathbb{K} \times \mathbb{W}$ by \Cref{assum:mdpassum}(c), $c(x^{N_k},\hat\pi^{N_k}(x),z^k) \to c(x,a^*,z)$  for all $z^k \to z$. Since $F$ is continuous over $\mathbb{K} \times \mathbb{W}$ by \Cref{assum:mdpassum}(b), from \Cref{thm:robustconvergence} (the  stronger claim established in its proof), $\lim_{k \to \infty}\tilde J^{N_k,\epsilon^{N_k}}(F(x^{N_k},\hat\pi^{N_k}( x^{N_k}),z^k)) =  J^*(F(x,a^*,z))$ for all $z^k \to z$.
The boundedness of $c$ along with the definition of the robust optimal value function in \eqref{eqn:robustmdp} implies  $\sup_{k \in \mathbb{N}}\big|\big|c + \tilde J^{N_k,\epsilon^{N_k}}\big|\big|_\infty < \infty$. Since $\epsilon^N \to 0$, by \Cref{lem:ballconv}, $\lim_{k \to \infty}\bar\nu^{N_k} = \mu$ weakly. Hence, by  \Cref{lem:serfozo}, $\lim_{k \to \infty}\int_\mathbb{W} (c(x^{N_k},\hat\pi^{N_k}(x^{N_k}),z) + 
\alpha  \tilde J^{N_k,\epsilon^{N_k}}(F(x^{N_k},\hat\pi^{N_k}(x^{N_k}),z)))\bar\nu^{N_k}(dz) = \int_\mathbb{W} (c(x, a^*,z) + \alpha J^*(F(\\x,a^*,z))\mu(dz)$. As $\delta > 0$ is arbitrary, we have $J^*(x) = \int_\mathbb{W}(c(x,a^*,z) + \alpha J^*(F(x,a^*,z)))\mu(dz)$. \Cref{thm:bellman}(b) then implies that $a^*$ is an optimal action in state $x$ for problem \eqref{eqn:mdpoptimality}, i.e., every convergent subsequence of $\hat\pi^N(x^N)$ converges to an element in $A(x)$ that is an optimal action in state $x$ for problem \eqref{eqn:mdpoptimality}.

We now establish the convergence (as $N \to \infty$) of $(\Phi_{t,\hat\pi^N}J^*)(x)$ to $(\Phi_t J^*)(x)$ for all $t \in \mathbb{N}$. Consider the base case $t = 1$. For all $N \geq 1$, we have that $
\big|(\Phi_{\hat\pi^N}J^*)(x^N) - (\Phi J^*)(x)\big| = \big|\int_\mathbb{W}(c(x^N,\hat\pi^N(x^N),z) + \alpha J^*(F(x^N,\hat\pi^N(x^N),z)))\mu(dz) - \min_{a \in A(x)}\int_\mathbb{W}(c(x,a,z) +  \alpha J^*(F(x,a,z)))\mu(dz)\big|.$ Suppose for a contradiction $\abs{(\Phi_{\hat\pi^N}J^*)(x^N) - (\Phi J^*)(x)}$ does not converge to $0$. Hence, there exists $\delta > 0$, subsequences $x^{N_k}$ and $\hat\pi^{N_k}(x^{N_k})$, such that,
\begingroup
\allowdisplaybreaks
\begin{align}
\nonumber
&\Big|\int_\mathbb{W}(c(x^{N_k},\hat\pi^{N_k}(x^{N_k}),z) + \alpha J^*(F(x^{N_k},\hat\pi^{N_k}(x^{N_k}),z)))\mu(dz) -\\
\label{eqn:oostemp3}
&\qquad \qquad \min_{a \in A(x)}\int_\mathbb{W}(c(x,a,z) + \alpha J^*(F(x,a,z)))\mu(dz)\Big| > \delta \ \forall k \in \mathbb{N}.
\end{align}
\endgroup
Since $x^N \to x$ and the set-valued mapping $x \mapsto A(x)$ is compact-valued and continuous,  $\hat\pi^{N_k}(x^{N_k})$ has a convergent subsequence $\hat\pi^{N_{k_i}}(x^{N_{k_i}})$ that converges to an $a^* \in A(x)$. As established earlier in the proof, $a^*$ is an optimal action in state $x$ for  problem \eqref{eqn:mdpoptimality}. Next, by \Cref{lem:bellmanberge}(b), $J^*$ is continuous and bounded over $\mathbb{X}$. Hence, by the dominated convergence theorem,
\begingroup
\allowdisplaybreaks
\begin{align*}
&\lim_{i \to \infty}\int_\mathbb{W}\left(c(x^{N_{k_i}},\hat\pi^{N_{k_i}}(x^{N_{k_i}}),z) + \alpha J^*(F(x^{N_{k_i}},\hat\pi^{N_{k_i}}(x^{N_{k_i}}),z))\right)\mu(dz)\\
&= \int_\mathbb{W}\hspace*{-2mm}\left(c(x,a^*,z) + \alpha J^*(F(x,a^*,z))\right)\mu(dz)\\
&= J^*(x) =  \min_{a \in A(x)}\int_\mathbb{W}\left(c(x,a,z) + \alpha J^*(F(x,a,z))\right)\mu(dz),
\end{align*}
\endgroup
thereby contradicting \eqref{eqn:oostemp3}. Hence, $\abs{(\Phi_{\hat\pi^N}J^*)(x) - (\Phi J^*)(x)} \to 0$ as $N \to \infty$, thereby establishing the base case. The induction step follows by a similar argument as that of the base case.
\end{proof}

\begin{proof}[Proof of \Cref{cor:oosintegralconv}]
Similar to the proof of \Cref{cor:robustintegralconv}, and hence omitted.
\end{proof}
\subsection{Proofs for \Cref{sec:probguarantee}}
\label{appendix:bounded_cost_probguarantee}
\begin{proof}[Proof of \Cref{prop:prob_perf_pointwise}]
 Our approach is similar to the proof in \cite[Theorem 3]{yang2020wasserstein}. We present it here for completeness. From the definition of $\Phi_{\hat\pi^N}$ in \eqref{eqn:eval}, for any $x \in \mathbb{X}$,
 \begingroup
 \allowdisplaybreaks
 \begin{align*}
  &(\Phi_{\hat\pi^N}\tilde J^{N,\epsilon})(x) =  \int_\mathbb{W} \left(c(x,\hat\pi^N(x),z) + \alpha  \tilde J^{N,\epsilon}(F(x,\hat\pi^N(x),z))\right)\mu(dz)\\
  &\quad \leq \sup_{\nu \in {\cal P}^N(\epsilon)}\int_\mathbb{W} \left(c(x,\hat\pi^N(x),z) + \alpha  \tilde J^{N,\epsilon}(F(x,\hat\pi^N(x),z))\right)\nu(dz) = \tilde J^{N,\epsilon}(x),
 \end{align*}
 \endgroup
where the inequality follows since $\mu \in {\cal P}^N(\epsilon)$ and the final equality from  \eqref{eqn:rmdp_policy_bellman}.

By the monotonicity property of the integral and the above  established claim $(\Phi_{\hat\pi^N} \tilde J^{N,\epsilon})(x) \leq  \tilde J^{N,\epsilon}(x)  \ \forall x \in \mathbb{X}$,  we have from the definition of $\Phi_{\hat\pi^N}$ in \eqref{eqn:eval} that $(\Phi_{\hat\pi^N} \Phi_{\hat\pi^N} \tilde J^{N,\epsilon})(x)  \leq (\Phi_{\hat\pi^N} \tilde J^{N,\epsilon})(x) \\ \forall x \in \mathbb{X}$. This implies $(\Phi_{2,\hat\pi^N} \tilde J^{N,\epsilon})(x) \leq (\Phi_{\hat\pi^N} \tilde J^{N,\epsilon})(x) \leq  \tilde J^{N,\epsilon}(x) \  \forall x \in \mathbb{X}$,  where the last inequality follows from our established claim $(\Phi_{\hat\pi^N} \tilde J^{N,\epsilon})  \leq  \tilde J^{N,\epsilon}$. Repeatedly applying $\Phi_{\hat\pi^N}$, for any  $k \in \mathbb{N}$, we get $(\Phi_{k,\hat\pi^N} \tilde J^{N,\epsilon})(x)  \leq  \tilde J^{N,\epsilon}(x)  \ \forall x \in \mathbb{X}$. The claim $J(\hat\pi^N,x) \leq \tilde J^{N,\epsilon}(x) \ \forall x \in \mathbb{X}$ then follows by passing to limit $k \to \infty$ since $\lim_{k \to \infty}(\Phi_{k,\hat\pi^N} \tilde J^{N,\epsilon})(x) = J(\hat\pi^N,x) \ \forall x \in \mathbb{X}$ by  \Cref{thm:bellman}(c).
\end{proof}

\begin{proof}[Proof of \Cref{thm:controlinfiniteprob}]
We have $\mu_\otimes^N[J(\hat\pi^N,x) \leq \tilde J^{N,\epsilon^N_\gamma} \ \forall x \in \mathbb{X}] \geq \mu_\otimes^N[d(\mu,\hat\mu^N) \leq \epsilon^N_\gamma] \geq 1-\gamma$, where the first inequality is due to \Cref{prop:prob_perf_pointwise}, and the second one follows by the choice of $\epsilon^N_\gamma$ and \Cref{assum:controlconc}.\end{proof}

\subsection{Proofs for \Cref{sec:conv_rate}}
\label{appendix:bounded_cost_conv_rate}
Let $\rho_\mathbb{X},\rho_\mathbb{A},\rho_\mathbb{W}$ be the metrics on $\mathbb{X},\mathbb{A}$, and $\mathbb{W}$, respectively, that metrize their corresponding topologies. Our proof of \Cref{thm:conv_rate} requires intermediate lemmas that are presented next.

\begin{lem}\label{lem:phi_t_lipschitz}
\normalfont
Consider any $v: \mathbb{X} \to \mathbb{R}$ which is $L_v$-Lipschitz continuous. Suppose \Cref{assum:conv_rate_assum} holds. For all $t \in \mathbb{N}$, define $\delta_t \defeq L_c \sum_{k=1}^t(\alpha L_F)^{k-1} + L_v \left(\alpha L_F\right)^t$. Then,
\begin{enumerate}[(a)]
\item $\abs{(\Phi_t v)(x_1) - (\Phi_t v)(x_2)} \leq \delta_t\rho_\mathbb{X}(x_1,x_2)$ for all $t \in \mathbb{N}$ and $x_1,x_2 \in \mathbb{X}$.
\item $\abs{(\Phi_t v)(F(x,a,z_1)) - (\Phi_t v)(F(x,a,z_2))} \leq \delta_t L_F \rho_\mathbb{W}(z_1,z_2)$ for all $t \in \mathbb{N}, \ x \in \mathbb{X}, a\in A, z_1,z_2 \in \mathbb{W}$.
\end{enumerate}
\end{lem}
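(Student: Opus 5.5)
The plan is to prove part (a) by induction on $t$, and then obtain part (b) as an immediate corollary of part (a).

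\emph{Base case $t=1$.} Fix $x_1,x_2 \in \mathbb{X}$. By \Cref{assum:conv_rate_assum}(a), $A(x)=\mathbb{A}$ for every $x$, so both minimizations in $(\Phi v)(x_1)$ and $(\Phi v)(x_2)$ run over the same set $\mathbb{A}$. Using $\abs{\min_a f(a) - \min_a g(a)} \leq \sup_a \abs{f(a)-g(a)}$, it is enough to bound, uniformly in $a \in \mathbb{A}$,
\[
\int_\mathbb{W} \Big(\abs{c(x_1,a,z)-c(x_2,a,z)} + \alpha\abs{v(F(x_1,a,z)) - v(F(x_2,a,z))}\Big)\mu(dz).
\]
The Lipschitz properties in \Cref{assum:conv_rate_assum}(b)(i)--(ii) bound the first term by $L_c\rho_\mathbb{X}(x_1,x_2)$. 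For the second term, the Lipschitz continuity of $v$ combined with (b)(ii) gives the bound $\alpha L_v L_F \rho_\mathbb{X}(x_1,x_2)$. Adding these yields $\delta_1 = L_c + L_v\alpha L_F$.

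\emph{Induction step.} Assume $\Phi_{t-1}v$ is $\delta_{t-1}$-Lipschitz, where $\delta_0 \defeq L_v$ so that the base case fits the same pattern. Apply the argument above with $\Phi_{t-1} v$ in place of $v$, which is valid because $\Phi_t v = \Phi(\Phi_{t-1} v)$. This shows $\Phi_t v$ is Lipschitz with constant $L_c + \alpha L_F\delta_{t-1}$. The remaining check is the algebraic identity
\[
L_c + \alpha L_F\Big(L_c\sum_{k=1}^{t-1}(\alpha L_F)^{k-1} + L_v(\alpha L_F)^{t-1}\Big) = L_c\sum_{k=1}^{t}(\alpha L_F)^{k-1} + L_v(\alpha L_F)^t = \delta_t,
\]
which holds by reindexing the sum.

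\emph{Part (b).} Apply (a) at the points $F(x,a,z_1)$ and $F(x,a,z_2)$, and then use the $L_F$-Lipschitz continuity of $F(x,a,\cdot)$ from \Cref{assum:conv_rate_assum}(b)(ii).

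The one point that needs care is measurability and finiteness: $\Phi_{t-1}v$ must be a valid argument for $\Phi$, with the minimum attained and the integrals finite. Since $v$ is Lipschitz, it is continuous. If $v$ is also bounded, then \Cref{lem:bellmanberge}(a) gives continuity and boundedness of $\Phi_t v$. If $v$ is unbounded, Lipschitz continuity still gives continuity, and the min-difference inequality only needs the integrals to be finite. I will note this point but not belabor it. Otherwise the whole argument is a routine induction, and I expect no real obstacle.
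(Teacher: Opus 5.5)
Your proof is correct and is the standard argument: the paper gives no proof of its own and simply cites \cite[Theorem 4.37]{saldi2018finite}, which rests on the same value-iteration induction with $\delta_t = L_c + \alpha L_F\delta_{t-1}$. The reference phrases the step through a Lipschitz (Wasserstein) condition on the transition kernel, which in this setting is exactly your coupling through the common disturbance $z$, and part (b) then follows from (a) as you describe.

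Your caveat about unbounded $v$ can be dropped: $\Phi$ is only defined on $\Sigma_b(\mathbb{X})$, and the lemma is only ever applied to bounded Lipschitz $v$. For unbounded $v$, finiteness of the integrals would in fact need a moment condition on $\mu$, so that case is not covered by your remark anyway.
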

\Cref{lem:phi_t_lipschitz} appears in varying forms in the literature; see  \cite[Theorem 4.37]{saldi2018finite} for a proof of this lemma.

\begin{defn}\label{defn:diff}
\normalfont
 For all $N \in \mathbb{N}, \epsilon \in [0,\infty), t \in \mathbb {N}, v \in \Sigma_b(\mathbb{X})$, define $Q_{t,v}: \mathbb{X} \times \mathbb{A} \to \mathbb{R}$ and $\tilde Q_{t,v}^N: \mathbb{X} \times \mathbb{A} \to \mathbb{R}$ as
 \begingroup
 \allowdisplaybreaks
\begin{align}\label{eqn:Q_t}
Q_{t,v}(x,a) &\defeq \int_\mathbb{W} \left(c(x,a,z) + \alpha (\Phi_{t-1} v)(F(x,a,z))\right)\mu(dz),\\
\label{eqn:tilde_Q_t}
\tilde Q_{t,v}^{N,\epsilon} (x,a) &\defeq \sup_{\nu^N \in {\cal P}^N(\epsilon)}\int_\mathbb{W} \left(c(x,a,z) + \alpha (\tilde \Phi_{t-1}^{N,\epsilon} v)(F(x,a,z))\right)\nu^N(dz).
\end{align}
\endgroup
For $v:\mathbb{X} \to \mathbb{R}$ that is $L_v$-Lipschitz and bounded, define ${\bf Diff}(t,v,N,\epsilon)$ as
\begingroup
\allowdisplaybreaks
\begin{align}
\label{eqn:diff}
&{\bf Diff}(t,v,N,\epsilon) \defeq {\bf Diff}_1(N,\epsilon) {\bf Diff}_2(t,v), \text{ where, }\\
\nonumber
{\bf Diff}_1(N,\epsilon) \defeq \psi(d(\mu,\hat \mu^N)) &+ \psi(\epsilon), \text{ and }\\
\nonumber
{\bf Diff}_2(t,v) \defeq
\Bigg[(\norm{c}_\infty + L_c)&\sum_{k=1}^t \alpha^{k-1} + \alpha^t\left(\norm{v}_\infty + L_v L_F\right)+ \norm{c}_\infty\sum_{j=1}^{t-1}\alpha^j\sum_{k=1}^{t-j}\alpha^{k-1} \\
\nonumber
+ (t-1)\alpha^t\norm{v}_\infty +
 \left(\alpha L_c L_F\right)&\Bigg(\sum_{j=1}^t\alpha^{j-1} \sum_{k=1}^{t-j}(\alpha L_F)^{k-1}\Bigg) + \alpha L_v L_F \sum_{j=1}^{t-1}(\alpha L_F)^j\alpha^{t-j-1}\Bigg].
\end{align}
\endgroup
\end{defn}

\begin{lem}\label{lem:conv_rate_operators}
 \normalfont
Fix a sample-size $N \geq 1, \epsilon \in [0,\infty)$.  Suppose \Cref{assum:conv_rate_assum} holds. Let the distance function $d$ satisfy \Cref{assum:betametrize}, and the function $\psi$ introduced there is nondecreasing. Then, for all $v:\mathbb{X} \to \mathbb{R}$ that is $L_v$-Lipschitz and bounded,
\begingroup
\allowdisplaybreaks
\begin{align}\label{eqn:conv_rate_1}
\big|Q_{t,v}(x,a) - \tilde Q_{t,v}^{N,\epsilon} (x,a)\big| &\leq {\bf Diff}(t,v,N,\epsilon) \quad  \forall t \in \mathbb{N},(x,a) \in \mathbb{X} \times \mathbb{A},\\
\label{eqn:conv_rate_2}
\big|(\Phi_t v)(x) - (\tilde \Phi_t^{N,\epsilon} v)(x)\big| &\leq {\bf Diff}(t,v,N,\epsilon) \quad \forall t \in \mathbb{N}, x \in \mathbb{X}.
\end{align}
\endgroup
\end{lem}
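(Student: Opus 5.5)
The argument is an induction on $t$, proving \eqref{eqn:conv_rate_1} and \eqref{eqn:conv_rate_2} together. The second bound follows from the first because $\Phi_t v(x)=\min_{a\in\mathbb{A}}Q_{t,v}(x,a)$ and $\tilde\Phi^{N,\epsilon}_t v(x)=\min_{a\in\mathbb{A}}\tilde Q^{N,\epsilon}_{t,v}(x,a)$. Since $|\min_a f-\min_a g|\le\sup_a|f-g|$, \eqref{eqn:conv_rate_2} at stage $t$ is immediate from \eqref{eqn:conv_rate_1} at stage $t$. So the work is to bound $|Q_{t,v}-\tilde Q^{N,\epsilon}_{t,v}|$ assuming \eqref{eqn:conv_rate_2} holds at stage $t-1$.

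For fixed $(x,a)$, I will split the difference with the triangle inequality into two pieces:
\[
\Big|\int g_{t}\,d\mu-\sup_{\nu\in{\cal P}^N(\epsilon)}\int g_t\,d\nu\Big| \quad\text{and}\quad \alpha\sup_{\nu\in{\cal P}^N(\epsilon)}\int\big|(\Phi_{t-1}v)(F(x,a,z))-(\tilde\Phi^{N,\epsilon}_{t-1}v)(F(x,a,z))\big|\,\nu(dz).
\]
Here $g_t(z)=c(x,a,z)+\alpha(\Phi_{t-1}v)(F(x,a,z))$ uses the \emph{true} iterate.

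\textbf{Second piece.} This is at most $\alpha\,{\bf Diff}(t-1,v,N,\epsilon)$ by the induction hypothesis \eqref{eqn:conv_rate_2}.

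\textbf{First piece.} This is at most $\sup_{\nu\in{\cal P}^N(\epsilon)}|\int g_t\,d\mu-\int g_t\,d\nu|$. Since $g_t$ is bounded and Lipschitz in $z$, we have $|\int g_t\,d\mu-\int g_t\,d\nu|\le\norm{g_t}_{BL}\,\beta(\mu,\nu)$. The Lipschitz constant comes from \Cref{assum:conv_rate_assum}(b) and \Cref{lem:phi_t_lipschitz}(b): $\norm{g_t}_L\le L_c+\alpha\delta_{t-1}L_F$. The sup-norm is bounded using $\norm{\Phi_{t-1}v}_\infty\le\norm{c}_\infty\sum_{k=1}^{t-1}\alpha^{k-1}+\alpha^{t-1}\norm{v}_\infty$, which gives $\norm{g_t}_\infty\le\norm{c}_\infty\sum_{k=1}^{t}\alpha^{k-1}+\alpha^{t}\norm{v}_\infty$. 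For the distance, the triangle inequality for the metric $\beta$ together with \Cref{assum:betametrize} gives
\[
\beta(\mu,\nu)\le\beta(\mu,\hat\mu^N)+\beta(\hat\mu^N,\nu)\le\psi(d(\mu,\hat\mu^N))+\psi(\epsilon)={\bf Diff}_1(N,\epsilon).
\]
The last step uses that $\psi$ is nondecreasing and $d(\nu,\hat\mu^N)\le\epsilon$. One point needs checking: $\beta$ is symmetric but $d$ may not be, so I should confirm that the argument order matches \Cref{assum:betametrize}. The assumption is stated for all ordered pairs, so applying it with $(\nu_1,\nu_2)=(\hat\mu^N,\nu)$, or using symmetry of $\beta$, suffices.

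\textbf{Closing the recursion.} Combining the two pieces gives
\[
{\bf Diff}(t)\le{\bf Diff}_1\cdot\big(\norm{g_t}_{\infty}+\norm{g_t}_L\big)+\alpha\,{\bf Diff}(t-1).
\]
Unrolling this yields ${\bf Diff}_1$ times a sum over $j$ of $\alpha^{j}$ times the stage-$(t-j)$ BL-norm bounds. The base case is $t=1$ with $\tilde\Phi_0 v=\Phi_0 v=v$; there $g_1=c+\alpha v\circ F$, whose BL-norm is at most $\norm{c}_\infty+L_c+\alpha(\norm{v}_\infty+L_vL_F)$, consistent with ${\bf Diff}_2(1,v)$.

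\textbf{Main obstacle.} The step I expect to be hardest is the bookkeeping that matches the unrolled sum term by term with the explicit ${\bf Diff}_2(t,v)$. In particular, the double sums $\norm{c}_\infty\sum_j\alpha^j\sum_k\alpha^{k-1}$, the $(t-1)\alpha^t\norm{v}_\infty$ term, and the $\alpha L_cL_F$ and $\alpha L_vL_F$ sums must come out exactly. My approach is to verify that the explicit formula satisfies ${\bf Diff}_2(t)\ge \text{BL}_t+\alpha{\bf Diff}_2(t-1)$, rather than deriving it in closed form; an inequality in that direction is all the induction needs.
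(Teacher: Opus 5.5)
Your proposal is correct and follows the paper's proof essentially step for step. The shared steps are: induction on $t$; deducing \eqref{eqn:conv_rate_2} from \eqref{eqn:conv_rate_1} via $|\min f-\min g|\le\sup|f-g|$; bounding the $\mu$-versus-$\nu$ discrepancy by the bounded-Lipschitz norm of $c+\alpha(\Phi_{t-1}v)\circ F$ (via \Cref{lem:phi_t_lipschitz}(b)) times $\beta(\mu,\nu)\le\psi(d(\mu,\hat\mu^N))+\psi(\epsilon)$; and controlling the remaining term by $\alpha\,{\bf Diff}(t-1,v,N,\epsilon)$. The differences are cosmetic: you use $|a-\sup_\nu b_\nu|\le\sup_\nu|a-b_\nu|$ in place of the paper's $\delta$-maximizer $\nu_t^{N,\epsilon,x,a,\delta}$, and the recursion check you plan holds with equality, matching the paper's step (c). Your remark on the argument order of $d$, resolved via symmetry of $\beta$, addresses a point the paper glosses over.
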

\begin{proof}[Proof of \Cref{lem:conv_rate_operators}]
We first prove that \eqref{eqn:conv_rate_1} implies \eqref{eqn:conv_rate_2}. Towards that end, for any $x \in \mathbb{X}$ and $t \in \mathbb{N}$, observe that $(\Phi_t v)(x) = \min_{a \in \mathbb{A}}Q_{t,v}(x,a)$ and $(\tilde \Phi_t^{N,\epsilon} v)(x) = \min_{a \in \mathbb{A}}\tilde Q_{t,v}^{N,\epsilon}(x,a)$. Hence, $\abs{(\Phi_t v)(x) - (\tilde \Phi_t^{N,\epsilon} v)(x)} \leq  \sup_{a \in \mathbb{A}}\abs{Q_{t,v}(x,a) - \tilde Q_{t,v}^{N,\epsilon} (x,a)}\\ \leq \sup_{a \in \mathbb{A}}{\bf Diff}(t,v,N,\epsilon) = {\bf Diff}(t,v,N,\epsilon)$, thus establishing \eqref{eqn:conv_rate_2}. In the rest of the proof, we prove  inequality \eqref{eqn:conv_rate_1} using induction on $t \in \mathbb{N}$.

Fix $(x,a) \in \mathbb{X} \times \mathbb{A}$ and $\delta > 0$. For any $t \in \mathbb{N}$, let $\nu_t^{N,\epsilon,x,a,\delta} \in {\cal P}^N(\epsilon)$ be a $\delta$-maximizer of $\tilde  Q_{t,v}^{N,\epsilon} (x,a)$. Denote the corresponding value at $\nu_t^{N,\epsilon,x,a,\delta}$ by $\tilde Q_{t,v}^{N,\epsilon,\delta}(x,a)$. Consider the base case $t=1$.  
\begingroup
\allowdisplaybreaks
\begin{align*}
& \abs{Q_{1,v}(x,a) - \tilde Q_{1,v}^{N,\epsilon} (x,a)} \leq \abs{Q_{1,v}(x,a) - \tilde Q_{1,v}^{N,\epsilon,\delta}(x,a)} +  \abs{\tilde Q_{1,v}^{N,\epsilon,\delta}(x,a) - \tilde Q_{1,v}^{N,\epsilon} (x,a)}\\
&\quad \leq \abs{Q_{1,v}(x,a) - \tilde Q_{1,v}^{N,\epsilon,\delta}(x,a)} + \delta\\
&\quad \leq \abs{\int_\mathbb{W} c(x,a,z)\mu(dz) - \int_\mathbb{W} c(x,a,z)\nu_1^{N,\epsilon,x,a,\delta}(dz)} + \\
&\qquad \qquad \alpha \abs{\int_\mathbb{W} v(F(x,a,z))\mu(dz) - \int_\mathbb{W} v(F(x,a,z))\nu_1^{N,\epsilon,x,a,\delta}(dz)} + \delta\\
&\quad \stackrel{(a)} \leq \left(\norm{c}_\infty + L_c\right)\beta(\mu,\nu_1^{N,\epsilon,x,a,\delta}) + \alpha \left(\norm{v}_\infty + L_v L_F\right)\beta(\mu,\nu_1^{N,\epsilon,x,a,\delta}) + \delta\\
&\quad = \Big(\norm{c}_\infty + L_c + \alpha \left(\norm{v}_\infty + L_v L_F\right)\Big)\beta(\mu,\nu_1^{N,\epsilon,x,a,\delta}) + \delta\\
&\quad \stackrel{(b)} \leq \Big(\norm{c}_\infty + L_c + \alpha \left(\norm{v}_\infty + L_v L_F\right)\Big)\left(\beta(\mu,\hat \mu^N) + \beta(\hat\mu^N,\nu_1^{N,\epsilon,x,a,\delta})\right) + \delta\\
&\quad \stackrel{(c)} \leq \Big(\norm{c}_\infty + L_c + \alpha \left(\norm{v}_\infty + L_v L_F\right)\Big)\left(\psi(d(\mu,\hat \mu^N)) + \psi(d(\hat\mu^N,\nu_1^{N,\epsilon,x,a,\delta}))\right) + \delta\\
&\quad \stackrel{(d)} \leq \left(\norm{c}_\infty + L_c\right)\left(\psi(d(\mu,\hat \mu^N)) + \psi(\epsilon)\right) + \alpha \left(\norm{v}_\infty + L_v L_F\right)\left(\psi(d(\mu,\hat \mu^N)) + \psi(\epsilon)\right) + \delta\\
&\quad \stackrel{(e)}= \text{\bf Diff}(1,v,N,\epsilon) + \delta.
\end{align*}
\endgroup
Regarding the first term in the inequality in $(a)$, note that the cost function $c$ is bounded by \Cref{assum:mdpassum}(c) and $c(x,a,\cdot)$ is $L_c$-Lipschitz continuous by \Cref{assum:conv_rate_assum}(b)(i). The first term then follows by the definition of the $\beta$-metric in \eqref{eqn:betametric}. The second term in inequality $(a)$ follows by a similar argument by noting that $F(x,a,\cdot)$ is $L_F$-Lipschitz continuous by \Cref{assum:conv_rate_assum}(b)(ii) and $v$ is $L_v$-Lipschitz and bounded.  Inequality $(b)$ follows by the triangle inequality property of the $\beta$-metric. Inequality $(c)$ holds by \Cref{assum:betametrize}. For the inequality in $(d)$, we have $d(\hat\mu^N,\nu_1^{N,\epsilon,x,a,\delta}) \leq \epsilon$ since $\nu_1^{N,\epsilon,x,a,\delta} \in {\cal P}^N(\epsilon)$. Since $\psi$ is nondecreasing, we have $\psi(d(\hat\mu^N,\nu_1^{N,\epsilon,x,a,\delta})) \leq \psi(\epsilon)$. This, along with a rearrangement of the terms from inequality $(c)$ establishes inequality $(d)$. The equality in $(e)$ then follows from the definition of $\text{\bf Diff}(t,N,v,\epsilon)$ in \eqref{eqn:diff}. The base case follows since $\delta > 0$ is arbitrary.

For the induction step, consider any $t > 1$ and let \eqref{eqn:conv_rate_1} (which in turn implies \eqref{eqn:conv_rate_2}) hold for all $t' \leq t-1$. Then
\begingroup
\allowdisplaybreaks
\begin{align*}
 & \abs{Q_{t,v}(x,a) - \tilde Q_{t,v}^{N,\epsilon} (x,a)} \leq \abs{Q_{t,v}(x,a) - \tilde Q_{t,v}^{N,\epsilon,\delta}(x,a)} + \delta\\
&\quad \leq \abs{\int_\mathbb{W} c(x,a,z)\mu(dz) - \int_\mathbb{W} c(x,a,z)\nu_t^{N,\epsilon,x,a,\delta}(dz)} + \\
&\qquad \quad \alpha \abs{\int_\mathbb{W} (\Phi_{t-1}v)(F(x,a,z))\mu(dz) - \int_\mathbb{W} (\Phi_{t-1}v)(F(x,a,z))\nu_t^{N,\epsilon,x,a,\delta}(dz)} +\\
&\qquad \quad \alpha \abs{\int_\mathbb{W} (\Phi_{t-1}v)(F(x,a,z))\nu_t^{N,\epsilon,x,a,\delta}(dz) - \int_\mathbb{W} (\tilde \Phi_{t-1}^{N,\epsilon} v)(F(x,a,z))\nu_t^{N,\epsilon,x,a,\delta}(dz)} +  \delta\\
&\quad \stackrel{(a)} \leq \left(\norm{c}_\infty + L_c\right)\beta(\mu,\nu_t^{N,\epsilon,x,a,\delta}) +  \alpha \Bigg[\norm{c}_\infty \sum_{k=1}^{t-1}\alpha^{k-1} + \alpha^{t-1}\norm{v}_\infty +\\
&\qquad \quad L_c L_F \sum_{k=1}^{t-1}(\alpha L_F)^{k-1} + L_v L_F \left(\alpha L_F\right)^{t-1} \Bigg]\beta(\mu,\nu_t^{N,\epsilon,x,a,\delta})+ \alpha \text{{\bf Diff}}(t-1,v,N,\epsilon) + \delta\\
&\quad \stackrel{(b)} \leq \Bigg(\norm{c}_\infty + L_c +  \alpha \Bigg[\norm{c}_\infty \sum_{k=1}^{t-1}\alpha^{k-1} + \alpha^{t-1}\norm{v}_\infty + L_c L_F \sum_{k=1}^{t-1}(\alpha L_F)^{k-1} +\\
&\qquad  \qquad L_v L_F \left(\alpha L_F\right)^{t-1} \Bigg]\Bigg)\left(\psi(d(\mu,\hat \mu^N)) + \psi(\epsilon)\right)+ \alpha \text{{\bf Diff}}(t-1,v,N,\epsilon) + \delta\\
&\quad \stackrel{(c)}= \text{\bf Diff}(t,v,N,\epsilon) + \delta.
\end{align*}
\endgroup
The first term in inequality $(a)$ follows by the Lipschitz continuous and boundedness property of $c$. Regarding the second term in inequality $(a)$, first, the  definition of $\Phi_t v$ in \Cref{thm:bellman} implies $\norm{\Phi_t v}_\infty \leq \norm{c}_\infty \sum_{k=1}^t \alpha^{k-1} + \alpha^t \norm{v}_\infty$. This, along with the Lipschitz continuity  of $\Phi_t$  in \Cref{lem:phi_t_lipschitz}(b) results in the second term in $(a)$. The final term in $(a)$ follows by the induction hypothesis. Inequality $(b)$ follows by repeating steps $(b)$-$(d)$ from the proof argument of the base case. Plugging in the definition of ${\bf Diff}(t-1,v,N,\epsilon)$ in inequality $(b)$ along with additional algebra results in equality $(c)$. The induction step follows since  $\delta > 0$ is arbitrary.
\end{proof}

\begin{proof}[Proof of \Cref{thm:conv_rate}]
We first upper bound $\big|J^*(x) - \tilde J^{N,\epsilon}(x)\big|$. For any $x \in \mathbb{X}, t \in \mathbb{N}$,
\begingroup
\allowdisplaybreaks
 \begin{align*}
  &\big|J^*(x) - \tilde J^{N,\epsilon}(x)\big| \\
  &\quad \leq \abs{J^*(x) - (\Phi_t {\bf 0})(x)} + \big|(\Phi_t {\bf 0})(x) - (\tilde \Phi_t^{N,\epsilon} {\bf 0})(x)\big| + \big|(\tilde \Phi_t^N {\bf 0})(x) - \tilde J^{N,\epsilon}(x)\big|\\
  &\quad \leq (\norm{c}_\infty \alpha^t)/(1-\alpha) + {\bf Diff}(t,{\bf 0},N,\epsilon)  + (\norm{c}_\infty \alpha^t)/(1-\alpha),
 \end{align*}
 \endgroup
where the first term in the second inequality follows from \Cref{thm:bellman}(c), the second term from \eqref{eqn:conv_rate_2} in \Cref{lem:conv_rate_operators}, and the final term from \Cref{thm:robustbellman}(c). Hence, $
 \big|J^*(x) - \tilde J^{N,\epsilon}(x)\big| \leq \lim_{t \to \infty}\text{{\bf Diff}}(t,{\bf 0},N,\epsilon) = {\bf \Delta}\big(\psi(d(\mu,\hat \mu^N)) + \psi(\epsilon)\big)$, 
where the last equality follows by the plugging the definition of ${\bf Diff}(t,{\bf 0},N,\epsilon)$ from \eqref{eqn:diff} along with additional algebra.

To prove claim \eqref{eqn:oos_rate}, suppose $d(\mu,\hat\mu^N) \leq \epsilon$.  \Cref{prop:prob_perf_pointwise} then implies $J(\hat\pi^N,x) \leq \tilde J^{N,\epsilon}(x)$ for all $x \in \mathbb{X}$. Hence, $J(\hat\pi^N,x) - J^*(x) \leq \tilde J^{N,\epsilon}(x) - J^*(x) \leq {\bf \Delta}\big(\psi(d(\mu,\hat \mu^N)) + \psi(\epsilon)\big) \leq 2\, \psi(\epsilon){\bf \Delta}$ for all $x \in \mathbb{X}$, where the second inequality follows from the bound established on $\big|J^*(x) - \tilde J^{N,\epsilon}(x)\big|$ and the final inequality by our assumption that $\psi$ is nondecreasing. The claim \eqref{eqn:oos_rate} then follows from  condition \eqref{eqn:concineq_conv}.
\end{proof}

\subsection{Proofs for \Cref{sec:out_of_dist_bounds}}
\label{appendix:bounded_cost_out_of_dist_bounds}
Our proof of \Cref{thm:ood_rate} relies on intermediate bounds that are presented next. Recall from \Cref{sec:out_of_dist_bounds} the quantities associated with the true MDP are superscripted with the label ``true''. For the proxy MDP, we use the same set of notations of the nominal MDP from the previous sections, i.e., $Q_{t,v}, \Phi_t$, etc. The notations for the quantities associated with  data-driven RMDP will be consistent with the previous sections, i.e., $\tilde Q_{t,v}^{N,\epsilon}, \tilde \Phi_t^{N,\epsilon}, \tilde J^{N,\epsilon}, \hat\pi^N$, etc.
\begin{defn}\label{defn:ood_diff}
\normalfont
 Fix $v \in \Sigma_b(\mathbb{X})$. Define $(\Phi^{\text{true}}_0 v)(x) \defeq v(x) \ \forall x \in \mathbb{X}$. For all $t \in \mathbb{N}$, define the functions $Q_{t,v}^{\text{true}}: \mathbb{X} \times \mathbb{A} \to \mathbb{R}$ and $\Phi_t^{\text{true}} v: \mathbb{X} \to \mathbb{R}$ as
 \begingroup
 \allowdisplaybreaks
\begin{align}\label{eqn:ood_Q_real}
Q_{t,v}^{\text{true}}(x,a) &\defeq \int_\mathbb{W} \left(c(x,a,z) + \alpha (\Phi_{t-1}^{\text{true}} v)(F(x,a,z))\right)\mu^{\text{true}}(dz),\\
\label{eqn:ood_phi_real}
(\Phi_{t}^{\text{true}}v)(x) &\defeq \min_{a \in A}Q_{t,v}^{\text{true}}(x,a) \ \forall x \in \mathbb{X}.
\end{align}
\endgroup
\end{defn}

\begin{lem}\label{lem:ood_true_proxy_diff}
 \normalfont
Suppose the components of the true and proxy MDP satisfy  \Cref{assum:conv_rate_assum}. Then, for all $v:\mathbb{X} \to \mathbb{R}$ that is $L_v$-Lipschitz and bounded,
\begingroup
\allowdisplaybreaks
\begin{align}\label{eqn:ood_true_proxy_diff_1}
\abs{Q_{t,v}^{\text{true}}(x,a)-Q_{t,v}(x,a)} &\leq {\bf Diff}_2(t,v) \beta(\mu^{\text{true}},\mu) \quad  \forall t \in \mathbb{N},(x,a) \in \mathbb{X} \times \mathbb{A},\\
\label{eqn:ood_true_proxy_diff_2}
\abs{(\Phi_t^{\text{true}} v)(x)-(\Phi_t v)(x)} &\leq {\bf Diff}_2(t,v) \beta(\mu^{\text{true}},\mu) \quad  \forall t \in \mathbb{N},(x,a) \in \mathbb{X} \times \mathbb{A}.
\end{align}
\endgroup
\end{lem}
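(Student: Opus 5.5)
The plan is to mirror the proof of \Cref{lem:conv_rate_operators}, but with $\mu^{\text{true}}$ in place of the $\delta$-maximizer $\nu_t^{N,\epsilon,x,a,\delta}$. The $\beta$-distance $\beta(\mu^{\text{true}},\mu)$ then replaces ${\bf Diff}_1(N,\epsilon)$. No $\delta$-maximizer is needed here because both sides are plain integrals against fixed measures.

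First, \eqref{eqn:ood_true_proxy_diff_1} implies \eqref{eqn:ood_true_proxy_diff_2} exactly as before. Under \Cref{assum:conv_rate_assum}(a) we have $(\Phi_t^{\text{true}}v)(x)=\min_{a\in\mathbb{A}}Q^{\text{true}}_{t,v}(x,a)$ and $(\Phi_t v)(x)=\min_{a\in\mathbb{A}}Q_{t,v}(x,a)$. Hence the difference of minima is at most $\sup_a\abs{Q^{\text{true}}_{t,v}(x,a)-Q_{t,v}(x,a)}$.

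We prove \eqref{eqn:ood_true_proxy_diff_1} by induction on $t$.

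\textbf{Base case $t=1$.} Split $\abs{Q^{\text{true}}_{1,v}-Q_{1,v}}$ into the cost term and the $\alpha v\circ F$ term. Since $c(x,a,\cdot)$ is bounded and $L_c$-Lipschitz, the definition \eqref{eqn:betametric} gives a bound of $(\norm{c}_\infty+L_c)\beta(\mu^{\text{true}},\mu)$ for the cost term. Since $v(F(x,a,\cdot))$ is bounded by $\norm{v}_\infty$ and $L_vL_F$-Lipschitz, the second term is bounded by $\alpha(\norm{v}_\infty+L_vL_F)\beta(\mu^{\text{true}},\mu)$. The sum of these coefficients is exactly ${\bf Diff}_2(1,v)$, since all the double sums in \eqref{eqn:diff} are empty for $t=1$.

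\textbf{Induction step $t>1$.} Decompose
\[
Q^{\text{true}}_{t,v}-Q_{t,v} = \int c\,d(\mu^{\text{true}}-\mu) + \alpha\int(\Phi_{t-1}v)\circ F\, d(\mu^{\text{true}}-\mu) + \alpha\int\big((\Phi^{\text{true}}_{t-1}v)-(\Phi_{t-1}v)\big)\circ F\, d\mu^{\text{true}}.
\]
The first term is handled as in the base case. For the second term, use $\norm{\Phi_{t-1}v}_\infty\le\norm{c}_\infty\sum_{k=1}^{t-1}\alpha^{k-1}+\alpha^{t-1}\norm{v}_\infty$ together with the Lipschitz bound $\delta_{t-1}L_F$ from \Cref{lem:phi_t_lipschitz}(b), both of which are properties of the proxy MDP. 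Together these give exactly the bracketed coefficient appearing in step (a) of the proof of \Cref{lem:conv_rate_operators}, multiplied by $\beta(\mu^{\text{true}},\mu)$. For the third term, the induction hypothesis in the form \eqref{eqn:ood_true_proxy_diff_2} gives $\alpha{\bf Diff}_2(t-1,v)\beta(\mu^{\text{true}},\mu)$.

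It remains to check the recursion
\[
{\bf Diff}_2(t,v)=\norm{c}_\infty+L_c+\alpha\Big[\norm{c}_\infty\sum_{k=1}^{t-1}\alpha^{k-1}+\alpha^{t-1}\norm{v}_\infty+L_cL_F\sum_{k=1}^{t-1}(\alpha L_F)^{k-1}+L_vL_F(\alpha L_F)^{t-1}\Big]+\alpha{\bf Diff}_2(t-1,v).
\]
This identity was already used, with the common factor ${\bf Diff}_1(N,\epsilon)$, in equality (c) of the proof of \Cref{lem:conv_rate_operators}. Dividing out that factor, it is the same algebraic identity, so we can invoke it directly.

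\textbf{Main obstacle.} The only delicate point is confirming that this recursion for ${\bf Diff}_2$ matches \eqref{eqn:diff} term by term. The pieces to check are the $(t-1)\alpha^t\norm{v}_\infty$ term and the index ranges of the double sums. I would verify it by expanding $\alpha{\bf Diff}_2(t-1,v)$ and shifting indices in each sum. Beyond this, the Lipschitz and boundedness constants are those of the proxy iterates $\Phi_{t-1}v$, not the true ones. This is consistent with the decomposition above, because the $\beta$-term integrates the proxy iterate.
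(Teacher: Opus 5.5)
Your proposal is correct and follows the paper's own route. You derive \eqref{eqn:ood_true_proxy_diff_2} from \eqref{eqn:ood_true_proxy_diff_1} through the common minimization over $\mathbb{A}$, then prove \eqref{eqn:ood_true_proxy_diff_1} by induction with the same base-case bound; your three-term induction step is the template of \Cref{lem:conv_rate_operators} with $\mu^{\text{true}}$ in place of the $\delta$-maximizer, which the paper only calls ``similar''. The ${\bf Diff}_2$ recursion you isolate does hold directly: unrolling it gives ${\bf Diff}_2(t,v)=\sum_{s=1}^t\alpha^{t-s}a_s$, where $a_s$ is the one-step coefficient $\norm{c}_\infty+L_c+\alpha[\cdots]$, and this matches \eqref{eqn:diff} term by term, so you need not rely on ``dividing out'' ${\bf Diff}_1(N,\epsilon)$, which can be zero.
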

\begin{proof}[{\color{black}{Proof of \Cref{lem:ood_true_proxy_diff}}}]
The proof is similar to the proof of  \Cref{lem:conv_rate_operators}. The bound  in \eqref{eqn:ood_true_proxy_diff_2} follows from the bound in \eqref{eqn:ood_true_proxy_diff_1} since for any $x \in \mathbb{X}$ and $t \in \mathbb{N}$, we have $(\Phi_t^{\text{true}} v)(x) = \min_{a \in \mathbb{A}}Q_{t,v}^{\text{true}}(x,a)$ and $(\Phi_t v)(x) = \min_{a \in \mathbb{A}} Q_{t,v}(x,a)$. We now establish  \eqref{eqn:ood_true_proxy_diff_1} using induction on $t \in \mathbb{N}$. For the base case $t=1$, 
  \begin{align*}
   & \abs{Q_{1,v}^{\text{true}}(x,a)-Q_{1,v}(x,a)} \leq \abs{\int_{\mathbb{W}}c(x,a,z)\mu^{\text{true}}(dz) - \int_{\mathbb{W}}c(x,a,z)\mu(dz)} +\\
  &\qquad \alpha \abs{\int_{\mathbb{W}}v(F(x,a,z))\mu^{\text{true}}(dz) - \int_{\mathbb{W}}v(F(x,a,z))\mu(dz)}\\
  &\ \stackrel{(a)} \leq \left(\norm{c}_\infty + L_c\right)\beta(\mu^{\text{true}},\mu) + \alpha \left(\norm{v}_\infty + L_v L_F\right)\beta(\mu^{\text{true}},\mu) \stackrel{(b)}= {\bf Diff}_2(1,v) \beta(\mu^{\text{true}},\mu).
  \end{align*}
Here, $(a)$ follows by the Lipschitz continuity of $c, F, v$ along with the boundedness of $c$ and $v$. The equality in $(b)$ follows from the definition of ${\bf Diff}_2(1,v)$. This proves the base case and the induction step follows by similar base case arguments.
\end{proof}

\begin{lem}\label{lem:ood_conv_rate_operators}
 \normalfont
Fix $N \geq 1, \epsilon \in [0,\infty)$. Let the components of the true and proxy MDP satisfy  \Cref{assum:conv_rate_assum}. Let $d$ satisfy \Cref{assum:betametrize}, and the function $\psi$ stated there is nondecreasing. Let $v:\mathbb{X} \to \mathbb{R}$ be $L_v$-Lipschitz and bounded. For all $t \in \mathbb{N},(x,a) \in \mathbb{X} \times \mathbb{A}$, we have
\begin{align}\label{eqn:ood_conv_rate_1}
\big|Q_{t,v}^{\text{true}}(x,a) - \tilde Q_{t,v}^{N,\epsilon}(x,a)\big| &\leq \big({\bf Diff}_1(N,\epsilon)+ \beta(\mu^{\text{true}},\mu)\big){\bf Diff}_2(t,v), \\
\label{eqn:ood_conv_rate_2}
\big|(\Phi_t^{\text{true}} v)(x) - (\tilde \Phi_t^{N,\epsilon} v)(x)\big| &\leq \big({\bf Diff}_1(N,\epsilon)+ \beta(\mu^{\text{true}},\mu)\big){\bf Diff}_2(t,v).
\end{align}
\end{lem}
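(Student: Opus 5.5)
The plan is to obtain \Cref{lem:ood_conv_rate_operators} by a triangle inequality that combines the two bounds already available: \Cref{lem:ood_true_proxy_diff}, which compares the true MDP with the proxy MDP, and \Cref{lem:conv_rate_operators}, which compares the proxy MDP with the data-driven RMDP. The key observation is that both lemmas are stated with the same function ${\bf Diff}_2(t,v)$.

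For the first inequality \eqref{eqn:ood_conv_rate_1}, fix $t \in \mathbb{N}$ and $(x,a) \in \mathbb{X}\times\mathbb{A}$, and write
\[
\big|Q_{t,v}^{\text{true}}(x,a) - \tilde Q_{t,v}^{N,\epsilon}(x,a)\big| \leq \big|Q_{t,v}^{\text{true}}(x,a) - Q_{t,v}(x,a)\big| + \big|Q_{t,v}(x,a) - \tilde Q_{t,v}^{N,\epsilon}(x,a)\big|.
\]
The first term is at most ${\bf Diff}_2(t,v)\beta(\mu^{\text{true}},\mu)$ by \eqref{eqn:ood_true_proxy_diff_1}; this uses \Cref{assum:conv_rate_assum} for both the true and the proxy MDP. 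The second term is at most ${\bf Diff}(t,v,N,\epsilon) = {\bf Diff}_1(N,\epsilon){\bf Diff}_2(t,v)$ by \eqref{eqn:conv_rate_1}, applied to the proxy MDP with disturbance distribution $\mu$ and samples drawn from $\mu$. That application uses \Cref{assum:betametrize} and the monotonicity of $\psi$. Adding the two bounds and factoring out ${\bf Diff}_2(t,v)$ gives \eqref{eqn:ood_conv_rate_1}.

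For the second inequality \eqref{eqn:ood_conv_rate_2}, I would use the same min-comparison used at the start of the proof of \Cref{lem:conv_rate_operators}. Since $(\Phi_t^{\text{true}} v)(x) = \min_{a\in\mathbb{A}} Q_{t,v}^{\text{true}}(x,a)$ and $(\tilde\Phi_t^{N,\epsilon} v)(x) = \min_{a\in\mathbb{A}}\tilde Q_{t,v}^{N,\epsilon}(x,a)$, the difference of the two minima is at most $\sup_{a}$ of the pointwise difference. The right-hand side of \eqref{eqn:ood_conv_rate_1} does not depend on $a$, so \eqref{eqn:ood_conv_rate_2} follows at once. Alternatively, one can apply the triangle inequality directly to \eqref{eqn:ood_true_proxy_diff_2} and \eqref{eqn:conv_rate_2}.

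There is no real obstacle here. The one point to check is bookkeeping: ${\bf Diff}_1(N,\epsilon)$ must involve $d(\mu,\hat\mu^N)$ with the proxy $\mu$, consistent with how \Cref{lem:conv_rate_operators} is stated. This holds because $\hat\mu^N$ is built from samples of $\mu$ and $\tilde Q^{N,\epsilon}_{t,v}$ is compared against the proxy $Q_{t,v}$, so the earlier lemma applies verbatim. It also holds for every realization of $\hat\mu^N$, since nothing probabilistic is invoked.
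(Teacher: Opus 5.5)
Your proposal is correct and matches the paper's proof. The paper likewise bounds the $Q$-difference by the triangle inequality through the proxy $Q_{t,v}$, applying \eqref{eqn:ood_true_proxy_diff_1} and \eqref{eqn:conv_rate_1} and factoring out ${\bf Diff}_2(t,v)$, and then obtains \eqref{eqn:ood_conv_rate_2} by comparing the minima over $a \in \mathbb{A}$.
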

\begin{proof}[{\color{black}{Proof of \Cref{lem:ood_conv_rate_operators}}}]
We first prove \eqref{eqn:ood_conv_rate_1}.  
For any $t \in \mathbb{N},(x,a) \in \mathbb{X} \times \mathbb{A}$, we have
$
 \big|Q_{t,v}^{\text{true}}(x,a) - \tilde Q_{t,v}^{N,\epsilon}(x,a)\big| \leq \abs{Q_{t,v}^{\text{true}}(x,a)-Q_{t,v}(x,a)}  +  \big|Q_{t,v}(x,a)-\tilde Q_{t,v}^{N,\epsilon}(x,a)\big|
 \leq {\bf Diff}_2(t,v) \beta(\mu^{\text{true}},\mu) + {\bf Diff}(t,v,N,\epsilon),$ where first term in the last inequality follows from \eqref{eqn:ood_true_proxy_diff_1} and the second term from \eqref{eqn:conv_rate_1}. The claim in \eqref{eqn:ood_conv_rate_1} then follows by plugging the  definition of ${\bf Diff}(t,v,N,\epsilon)$ from \eqref{eqn:diff} in the second term. The claim   \eqref{eqn:ood_conv_rate_2}  follows from \eqref{eqn:ood_conv_rate_1} since for any $x \in \mathbb{X}$ and $t \in \mathbb{N}$, we have $(\Phi_t^{\text{true}} v)(x) = \min_{a \in \mathbb{A}}Q_{t,v}^{\text{true}}(x,a)$ and $(\tilde \Phi_t^{N,\epsilon} v)(x) = \min_{a \in \mathbb{A}}\tilde Q_{t,v}^{N,\epsilon}(x,a)$.
\end{proof}

\begin{proof}[{\color{black}{Proof of \Cref{thm:ood_rate}}}]
We first establish that for any $t \in \mathbb{N}$ and $x \in \mathbb{X}$,
\begin{equation}\label{eqn:ood_rate_temp1}
\textstyle
 (\Phi_t^{\text{true}} \tilde J^{N,\epsilon})(x)  \geq (\Phi^{\text{true}}_{t,\hat \pi^N }\tilde J^{N,\epsilon})(x) - 2\sum_{i=1}^t \alpha^{t-i}
 \big({\bf Diff}_1(N,\epsilon)+ \beta(\mu^{\text{true}},\mu)\big){\bf Diff}_2(i,\tilde J^{N,\epsilon}).
\end{equation}
Before proving \eqref{eqn:ood_rate_temp1}, we justify that using $\tilde J^{N,\epsilon}$ as an argument in ${\bf Diff}_2$ is well-defined. The definition of ${\bf Diff}_2$ in \eqref{eqn:diff} requires that $\tilde J^{N,\epsilon}$ be Lipschitz continuous and bounded. Since the cost function $c$ is bounded, the definition of $\tilde J^{N,\epsilon}$ implies $\big|\big|\tilde J^{N,\epsilon}\big|\big|_\infty \leq \norm{c}_\infty/(1-\alpha)$. Next,  $\tilde J^{N,\epsilon}$ is  $L_c/(1-\alpha L_F)$-Lipschitz continuous. This follows using similar arguments as in the proof of \cite[Lemma 5.1]{minjarez2020zero}, which establishes the Lipschitz continuity of value function for two player stochastic games. We use induction on $t \in \mathbb{N}$ to prove \eqref{eqn:ood_rate_temp1}. Fix $x \in \mathbb{X}$ and consider $t = 1$. Then
\begingroup
\allowdisplaybreaks
\begin{align*}
 &(\Phi^{\text{true}}_1 \tilde J^{N,\epsilon})(x) \stackrel{(a)} \geq (\tilde \Phi_1^{N,\epsilon} \tilde J^{N,\epsilon})(x) - \big({\bf Diff}_1(N,\epsilon)+ \beta(\mu^{\text{true}},\mu)\big){\bf Diff}_2(1,\tilde J^{N,\epsilon}) \\
 &\ = \min_{a \in \mathbb{A}}\tilde Q_{1,\tilde J^{N,\epsilon}}^{N,\epsilon}(x,a) - \big({\bf Diff}_1(N,\epsilon)+ \beta(\mu^{\text{true}},\mu)\big){\bf Diff}_2(1,\tilde J^{N,\epsilon})\\
 &\ \stackrel{(b)} = \tilde Q^{N,\epsilon}_{1,\tilde J^{N,\epsilon}}(x,\hat\pi^N(x)) - \big({\bf Diff}_1(N,\epsilon)+ \beta(\mu^{\text{true}},\mu)\big){\bf Diff}_2(1,\tilde J^{N,\epsilon})\\
 &\ \stackrel{(c)} \geq Q^{\text{true}}_{1,\tilde J^{N,\epsilon}}(x,\hat\pi^N(x)) - 2\big({\bf Diff}_1(N,\epsilon)+ \beta(\mu^{\text{true}},\mu)\big){\bf Diff}_2(1,\tilde J^{N,\epsilon})\\
 &\ = (\Phi^{\text{true}}_{1,\hat\pi^N}\tilde J^{N,\epsilon})(x) - 2\big({\bf Diff}_1(N,\epsilon)+ \beta(\mu^{\text{true}},\mu)\big){\bf Diff}_2(1,\tilde J^{N,\epsilon}).
\end{align*}
\endgroup
Inequality $(a)$ follows from \eqref{eqn:ood_conv_rate_2}. The equality in $(b)$ follows by definition \eqref{eqn:tilde_Q_t} along with the fact that $\hat \pi^N$ is the robust optimal policy. The inequality in $(c)$ follows by \eqref{eqn:ood_conv_rate_1}. This proves the base case, and the induction step follows via similar arguments.

Towards establishing the claim of the theorem, for  any $x \in \mathbb{X}$ and $t \in \mathbb{N}$, we have
\begingroup
\allowdisplaybreaks
\begin{align*}
 \nonumber
 &J^{\text{true}}(\hat\pi^N,x) - J^{*,\text{true}}(x) = 
 [J^{\text{true}}(\hat\pi^N,x)-(\Phi^{\text{true}}_{t,\hat \pi^N }\tilde J^{N,\epsilon})(x)] + \\
 &\qquad \qquad \qquad   [(\Phi^{\text{true}}_{t,\hat \pi^N }\tilde J^{N,\epsilon})(x) - (\Phi^{\text{true}}_t \tilde J^{N,\epsilon})(x)] +[(\Phi^{\text{true}}_t \tilde J^{N,\epsilon})(x) - J^{*,\text{true}}(x)].
\end{align*}
\endgroup
The first and last term in the above expression converge to $0$ uniformly over $\mathbb{X}$ and $\Pi$, as $t \to \infty$. Hence, $J^{\text{true}}(\hat\pi^N,x) - J^{*,\text{true}}(x) = \lim_{t \to \infty}[(\Phi^{\text{true}}_{t,\hat \pi^N }\tilde J^{N,\epsilon})(x) - (\Phi^{\text{true}}_t \tilde J^{N,\epsilon})(x)] \leq \lim_{t \to \infty}2\sum_{i=1}^t \alpha^{t-i}
 \big({\bf Diff}_1(N,\epsilon)+ \beta(\mu^{\text{true}},\mu)\big){\bf Diff}_2(i,\tilde J^{N,\epsilon}) \leq {\bf \Delta}(\psi(d(\mu,\hat\mu^N))+\psi(\epsilon)+\beta(\mu^{\text{true}},\mu))/(1-\alpha)$, where the first inequality follows from \eqref{eqn:ood_rate_temp1} and the final one by using the definition of ${\bf Diff}_1$ and ${\bf Diff}_2$ from \eqref{eqn:diff} along with additional algebra.  The nondecreasing property of $\psi$ along with condition \eqref{eqn:concineq_conv} then establishes the claim of the theorem since $\mu_\otimes^N[\psi(d(\mu,\hat\mu^N)) \leq \psi(\epsilon)] \geq \mu_\otimes^N[d(\mu,\hat\mu^N) \leq \epsilon] \geq 1 -\eta(N,\epsilon)$.
\end{proof}

\section{Proofs for Section  \ref{sec:empirical_mdps}}
\label{appendix:empirical_mdps_counterexample}
\begin{proof}[Proof of \Cref{thm:empirical_mdps_counterexample}]
Consider the MDP with $\mathbb{X} \defeq \{0,1,2,3,4\},\ \mathbb{A} \defeq \{1,3\},  A(x)  \defeq \mathbb{A} \text{ if } x = 0 \text{ and } \{1\} \text{ otherwise}, \ \mathbb{W} \defeq \{0,1\},\ \mu \defeq \text{Bernoulli}(p)$ with $p=0.5$, and $\alpha$ any arbitrary number in $(0,1)$. The function $F:\mathbb{K} \times \mathbb{W} \to \mathbb{X}$ is given as $F(x,a,w) \defeq x+a+w$ if $x = 0, \ a \in A(0), \ w \in \mathbb{W}$; and $x$ for all other $(x,a) \in \mathbb{K} \text{ and } w \in \mathbb{W}$. The single-stage cost function is given by $c(0,1,0) \defeq 12, \ c(0,1,1) \defeq 2, \ c(0,3,0) \defeq 8, \ c(0,3,1) \defeq 4$, and $0$ for all other $(x,a) \in \mathbb{K} \text{ and } w \in \mathbb{W}$. This MDP instance will serve as the true problem.

Let $\pi_1$ and $\pi_2$ be the two deterministic stationary policies of the true MDP, where $\pi_1$ is given as $\pi_1(x) \defeq 1 \text{ if } x = 0 \text{ and } 1 \ \forall x \in \mathbb{X} - \{0\}$, and $\pi_2$ is given as $\pi_2(x) \defeq 3 \text{ if } x = 0 \text{ and } 1 \ \forall x \in \mathbb{X} - \{0\}$. The value of $\pi_1$ is $J(\pi_1,0) = 7$ and $J(\pi_1,x) = 0 \ \text{ for } x \in \mathbb{X} - \{0\}$. The value of $\pi_2$ is $J(\pi_2,0) = 6$ and $J(\pi_2,x) = 0 \ \text{ for } x \in \mathbb{X} - \{0\}$. Note that the optimal policy for the true MDP is $\pi_2$.  Let $\hat p^N \defeq \sum_{i=1}^N w_i/N$ be the empirical estimate of the parameter $p$, where  $w_1,\ldots,w_N$ are iid samples of $w$. The value of $\pi_1$ on the empirical MDP is $\tilde J^N_{\text{Emp}}(\pi_1,0) = 12 - 10 \hat p^N \text{ and } \tilde J^N_{\text{Emp}}(\pi_1,x) = 0 \ \forall x \in \mathbb{X} - \{0\}$. The value of $\pi_2$ on the empirical MDP is $\tilde J^N_{\text{Emp}}(\pi_2,0) = 8 - 4 \hat p^N  \text{ and } \tilde J^N_{\text{Emp}}(\pi_2,x) = 0 \ \forall x \in \mathbb{X} - \{0\}$. 

To prove part (a) of \Cref{thm:empirical_mdps_counterexample}, first consider $N = 1$. If $\hat\pi^N_\text{Emp} = \pi_1$, then $J(\hat\pi^N_\text{Emp},0) - J^*(0) = J(\pi_1,0)-J(\pi_2,0)=1$. Since $\tilde J^N_{\text{Emp}}(\pi_1,0) = 12 - 10 \hat p^N$ and $\tilde J^N_{\text{Emp}}(\pi_2,0) = 8 - 4 \hat p^N$, we will have $\hat\pi^N_\text{Emp} = \pi_1$ if and only if $\hat p^N = 1$, which happens with a probability of $0.5$. Hence, $\hat\pi^N_\text{Emp}$ is not $\delta$-optimal policy for the true MDP with a confidence level of $1-\gamma$ for all $\delta < 1, \ \gamma < 0.5$. A similar analysis for $N = 2$ establishes that $\hat\pi^N_\text{Emp}$ is not $\delta$-optimal policy for the true MDP with a confidence level of $1-\gamma$ for all $\delta < 1, \ \gamma < 0.25$. Combining the conclusions of $N = 1$ and $N = 2$ completes the proof of part (a).

Regarding part (b) of \Cref{thm:empirical_mdps_counterexample}, we first establish that $\{J(\hat\pi^N_\text{Emp},0) \leq \tilde J^{N,*}_{\text{Emp}}(0)\} = \{\hat p^N \leq 1/2\}$. Suppose $\hat p^N \leq 1/2$. Then $\tilde J^{N}_{\text{Emp}}(\pi_2,0) \leq \tilde J^{N}_{\text{Emp}}(\pi_1,0)$, and hence the empirical MDP picks $\pi_2$ as the empirical optimal policy. In that case, $\tilde J^{N,*}_{\text{Emp}}(0) = \tilde J^{N}_{\text{Emp}}(\pi_2,0) = 8 - 4\hat p^N \in [6,8]$ which is greater than $J(\pi_2,0)$. Hence, $\{J(\hat\pi^N_\text{Emp},0) \leq \tilde J^{N,*}_{\text{Emp}}(0)\} \supseteq \{\hat p^N \leq 1/2\}$. For the other direction of inclusion, suppose that $\hat p^N > 1/2$. If $\hat p^N \in (1/2,2/3)$, then $\pi_2$ will be the empirical optimal policy and $\tilde J^{N,*}_{\text{Emp}}(0) = \tilde J^N_{\text{Emp}}(\pi_2,0) = 8 -4\hat p^N \in (5.33,6)$ which is less than $J(\pi_2,0)$. If $\hat p^N \in [2/3,1]$, then $\pi_1$ will be the empirical optimal policy, and we have $\tilde J^{N,*}_{\text{Emp}}(0) = \tilde J^N_{\text{Emp}}(\pi_1,0) = 12 -10\hat p^N \in [2,5.33]$ which is less than $J(\pi_1,0)$. Combining the results of cases $\hat p^N \in (1/2,2/3)$ and $\hat p^N \in [2/3,1]$  proves that $\{J(\hat\pi^N_\text{Emp},0) > \tilde J^{N,*}_{\text{Emp}}(0)\} \supseteq \{\hat p^N > 1/2\}$, establishing the other direction of inclusion. Hence, $\{J(\hat\pi^N_\text{Emp},0) \leq \tilde J^{N,*}_{\text{Emp}}(0)\} = \{\hat p^N \leq 1/2\}$ implies
\[\textstyle
 \mu_\otimes^N[J(\hat\pi^N_\text{Emp},0) \leq \tilde J^{N,*}_{\text{Emp}}(0)] = \mu_\otimes^N[\hat p^N \leq \frac{1}{2}] = \mu_\otimes^N[\sum_{i=1}^N w_i \leq N/2] = 2^{-N}\sum_{i=0}^{\lfloor N/2 \rfloor} \binom N i.
\]
From the binomial theorem, the last expression in the above chain is $0.5$ for positive odd integers. Plugging $N = 2$, the value of the last expression in the chain becomes $0.75$. Using a combinatorial analysis, it follows that $2^{-N}\sum_{i=0}^{\lfloor N/2 \rfloor} \binom N i$ is decreasing over positive even integers and is greater than $0.5$. Hence, part (b) is established.
\end{proof}

\end{appendix}      

\end{document}